		\def\l{3em}
\newcounter{Question} 
\newtheorem{theorem}{Theorem}[section]
\newtheorem{corollary}{Corollary}[theorem]
\newtheorem{lemma}[theorem]{Lemma}
\theoremstyle{remark}
\newtheorem*{remark}{Remark}
\theoremstyle{definition}
\newtheorem{definition}{Definition}[section]
\title{Total variation distance between a jump-equation and its Gaussian approximation}
\author[*]{Vlad Bally}
\author[*]{Yifeng Qin}
\affil[*]{Universit\'{e} Gustave Eiffel, LAMA (UMR CNRS, UPEMLV, UPEC), MathRisk INRIA, F-77454 Marne-la-Vall\'{e}e, France.
Email address: bally@univ-mlv.fr}
\date{}
\renewcommand{\ell}{l}
\begin{document}


\maketitle

\textbf{Abstract}

We deal with stochastic differential equations with jumps. In order to obtain an accurate approximation scheme, it is usual to replace the "small jumps" by a Brownian motion. In this paper, we prove that for every fixed time $t$, the approximate random variable $X^\varepsilon_t$ converges to the original random variable $X_t$ in total variation distance and we estimate the error. We also give an estimate of the distance between the densities of the laws of the two random variables. These are done by using some integration by parts techniques in Malliavin calculus.

\textbf{Key words:}
Stochastic differential equations with jumps, Malliavin calculus, Total variation distance, Gaussian approximation

\tableofcontents

\section{Introduction}
In this paper we consider the stochastic differential equation with jumps%
\[
X_{t}=x+\int_{0}^{t}\int_{(0,1]}c(s,z,X_{s-})N_{\mu }(ds,dz)
\]%
where $N_{\mu }$ is a Poisson random measure on $(0,1]$ with compensator $\mu (dz)ds$ 
and $c$ is a coefficient which verifies strong regularity
hypotheses (see \textbf{Hypotheses 2.1-2.4} in Section 2.1). 
The typical example that we have in mind is $\mu (dz)=\frac{dz}{z^{1+\rho}}1_{\{z\in (0,1]\}}$, 
with $\rho\in[0,1),$ so this is a truncated stable process - however, throughout the paper, we keep the general framework in which $\mu $ is a
measure which has infinite mass around zero. 
Our aim is to replace the "small jumps" by a space-time Brownian motion:%
\begin{eqnarray}
X_{t}^{\varepsilon } &=&x+\int_{0}^{t}\int_{\{z>\varepsilon
\}}c(s,z,X_{s-}^{\varepsilon })N_{\mu }(ds,dz)  \label{i1} \\
&&+\int_{0}^{t}b_{\varepsilon }(s,X_{s}^{\varepsilon
})ds+\int_{0}^{t}\int_{(0,\varepsilon ]}c(s,z,X_{s}^{\varepsilon })W_{\mu
}(ds,dz),  \nonumber
\end{eqnarray}%
where $W_{\mu }(ds,dz)$ is a space-time Brownian motion (in the sense of
Walsh $\cite{ref15}$) with covariance $\mu (dz)ds$, $x\in\mathbb{R}$, and the coefficient $b_{\varepsilon}$ is defined by%
\[
b_{\varepsilon }(s,x)=\int_{(0,\varepsilon ]}c(s,z,x)\mu (dz).
\]%
The interest of such approximations appears in various frameworks.

Our main motivation comes from numerical computations. 
If $\mu (E)<\infty$ then there are a finite number of jumps in any compact interval of time, so $X_{t}$
may be represented by means of a compound Poisson process which may be
explicitly simulated. But if $\mu (E)=\infty $ this is not possible anymore
(except in very particular situations - see Talay and Protter $\cite{ref30}$ for example), and the "small jumps" should be truncated to revert to the case of a finite measure. 
This procedure is rather rough and gives large errors. 
In order to improve the approximation scheme, one may replace the "small jumps", namely those smaller than $\varepsilon$, 
by a stochastic integral with respect to $W_\mu(ds,dz)$. 
Note that the Poisson measure $dN_{\mu }$ is not compensated, which is why the drift corresponding to $b_{\varepsilon }$ appears. 
This idea goes back to Asmussen and Rosinski $\cite{ref23}$. 
In the case of $SDE^{\prime }s$ driven by a L\'{e}vy process, Fournier $\cite{ref31}$ gives a precise estimate of the
error and compares the approximation obtained just by truncating the small jumps 
to the one obtained by adding a Gaussian noise as in (\ref{i1}). 
An enlightening discussion on the complexity of the two methods is also given. 
However, in that paper, the strong error is considered, while in our paper we discuss the weak error.

A second motivation comes from modelization problems in chemistry and
biology: we are concerned by reactions which are naturally modelled by means
of jump processes containing two regimes: one is very rapid but
the jumps are small, and another is much slower and the jumps are
larger -- see for example $\cite{ref21}$, $\cite{ref22}$, $\cite{ref24}$, $\cite{ref25}$, $\cite{ref28}$, $\cite{ref29}$.
In this case the regime corresponding to the rapid scale may be modelled by a stochastic
integral with respect to a Gaussian process and the slow regime by a compound Poisson process. It may also be reasonable to consider an
intermediary regime and this would be modelled by a drift term.

A third motivation is given by a class of statistical problems (see $\cite{ref16}$, $\cite{ref26}$ and references therein), 
where a stochastic process is observed at various times and it should be decided whether its increments are due to small jumps or to a Gaussian component.
In this framework it is important to estimate the error in total
variation sense. The authors explain that, if the error in total variation
between the laws of $X_{t}$ and of $X_{t}^{\varepsilon }$ goes to zero, then
there is no way to construct a test which decides if the noise comes from
small jumps or from the Brownian motion. 
So, asymptotically, the two models contain the same information.

Let us now discuss briefly our results and the relation to previously available estimates. 
If $L_t$ (respectively $L^{\varepsilon }_t$) represents the infinitesimal operator of $X_t$
(respectively of $X^{\varepsilon }_t)$ then a development in Taylor series of
order two gives 
\[
\left\Vert (L_t-L^{\varepsilon }_t)f\right\Vert _{\infty }\leq C\left\Vert
f\right\Vert _{3,\infty }\int_{(0,\varepsilon
]}\left\vert \hat{c}(z)\right\vert ^{3}\mu (dz),
\]%
where $\hat{c}(z):=\sup\limits_{s\leq T}\sup\limits_{x\in \mathbb{R}}\left\vert
c(s,z,x)\right\vert$ and  $\left\Vert f \right\Vert _{3,\infty }:=\sum\limits_{0\leq i\leq 3}\Vert f^{(i)}\Vert_\infty$. Then a Trotter-Kato type argument yields
\[
\sup_{s\leq T}\left\Vert (P_{t}-P_{t}^{\varepsilon })f\right\Vert _{\infty
}\leq C\left\Vert f\right\Vert _{3,\infty
}\int_{(0,\varepsilon ]}\left\vert \hat{c}(z)\right\vert ^{3}\mu (dz)
\]%
where $P_{t}$ (respectively of $P_{t}^{\varepsilon })$ represents the semigroup of $X_t$ (respectively of $X^{\varepsilon }_t).$

The drawback of the above estimate is that the bound on the error involves $\left\Vert f\right\Vert _{3,\infty }$, 
so it only applies to smooth test functions. 
The main contribution of our paper is to replace $\left\Vert f\right\Vert _{3,\infty }$ by $\left\Vert f\right\Vert_{\infty }$, 
so as to prove convergence in total variation distance. 
This is done under non-degeneracy and regularity assumptions on the coefficient $c.$
Moreover, under these hypotheses, we prove that $\mathbb{P}(X_{t}(x)\in dy)=p_{t}(x,y)dy$ and 
$\mathbb{P}(X_{t}^\varepsilon(x)\in dy)=p_{t}^{\varepsilon }(x,y)dy$ with smooth densities $%
y\mapsto p_{t}(x,y)$ and $y\mapsto p_{t}^{\varepsilon }(x,y).$ And,
for every $k$ and every $\delta>0$, we obtain%
\[
\left\Vert \partial _{y}^{k}p_{t}-\partial _{y}^{k}p_{t}^{\varepsilon}\right\Vert_{\infty }
\leq C_{k,\delta}\Big(\int_{(0,\varepsilon ]}\left\vert \hat{c}(z)\right\vert ^{3}\mu (dz)\Big)^{1-\delta}.
\]%
This proves that $p_t^\varepsilon$ converges to $p_t$ in distribution norms as $\epsilon \to 0$. 

Our approach uses a strategy based on integration by parts (an abstract Malliavin calculus) developed in $\cite{ref7}$.

The paper is organized as follows. In Section 2, we give the main results and in Section 3, we recall the integration by parts technique introduced in $\cite{ref7}$ and used here. In Section 4, we use these results in the framework of stochastic equations with jumps and we prove the main result (\textbf{Theorem 2.2}). The Appendix contains technical estimates concerning Sobolev norms in Malliavin sense.

\bigskip

\section{Main results}
\subsection{The basic equation and the hypotheses}
A time horizon $T>0$ will be fixed throughout the paper. As already mentioned, we deal with the one-dimensional jump equation 
\begin{equation}
X_{t}=x+\int_{0}^{t}\int_{(0,1]}c(s,z,X_{s-})N_{\mu}(ds,dz),  \label{1.1}
\end{equation}%
where $N_{\mu}$ is a Poisson point measure with intensity 
$\widehat{N_{\mu} }(ds,dz)=\mu (dz)ds$,
and $\mu$ is a positive $\sigma$-finite measure on $(0,1]$, $t\in[0,T]$.

For technical reasons which will be discussed in Section 4, we introduce the following change of variables. Let $\theta:(0,1]\rightarrow[1,\infty)$ be the function defined by $\theta(z)=\frac{1}{z}$, and let $\nu(dz):= \mu\circ \theta^{-1}(dz)$. Then $\nu$ is a positive $\sigma$-finite measure on $[1,\infty)$.
Consider a Poisson point measure $N_\nu(ds,dz)$ with intensity $\widehat{N_\nu}(ds,dz)=\nu(dz)ds$. 
One may then check that for every $t\in[0,T]$, $X_t$ has the same law as $\widehat{X}_t$, with $(\widehat{X}_t)_{t\in[0,T]}$ the solution of
\begin{eqnarray}
\widehat{X}_t=x+\int_{0}^{t}\int_{[1,\infty)}\widetilde{c}(s,z,\widehat{X}_{s-})N_{\nu }(ds,dz),  \label{hatX}
\end{eqnarray}
where $\widetilde{c}(s,z,x):=c(s,\frac{1}{z},x)$.

Since this paper deals with the laws of the solution to \eqref{1.1},
it is equivalent to consider the equation (\ref{hatX}). 
We formulate our hypotheses in terms of $\widetilde{c}$ and $\nu$ (instead of $c$ and $\mu$).
\medskip

\noindent \textbf{Hypothesis 2.1} (\textbf{Regularity with parameter $q^\ast$})
The map $(s,z)\mapsto \widetilde{c}(s,z,x)$ is continuous, and
there exists a non-negative and decreasing function $\bar{c}:[1,\infty)\rightarrow\mathbb{R}_{+}$ and a constant $q^\ast \in \mathbb N$ such that for every indices $\beta_1,\beta_2$, with $\beta_1\leq q^\ast$ and $\beta_2\leq q^\ast$, we have
\[
\sup_{s\in[0,T]}\sup_{x\in\mathbb{R}}(\vert \widetilde{c}(s,z,x)\vert+\vert \partial_z^{\beta_2}\partial_x^{\beta_1} \widetilde{c}(s,z,x)\vert)\leq\bar{c}(z), \quad\forall z\in[1,\infty),
\]
with 
\begin{eqnarray}
	\int_{[1,\infty)}\vert \bar{c}(z)\vert^p\nu(dz)=:\bar{c}_p<\infty,\quad\forall p\geq1. \label{cbar}
\end{eqnarray}

\begin{remark}
We will use several times the following consequence of (\ref{cbar}) and of Burkholder's inequality (see for example the Theorem 2.11 in $\cite{ref20}$): We assume that $\Phi(s,z,\omega)$ and $\varphi(s,\omega)$ are two non-negative functions such that 
\[
\vert\Phi(s,z,\omega)\vert\leq\bar{c}(z)\varphi(s,\omega).
\]
Then for any $p\geq2$,
\begin{eqnarray}
	\mathbb{E}\Big\vert\int_0^T\int_{[1,\infty)}\Phi(s,z,\omega)N_\nu(ds,dz)\Big\vert^p
	\leq C\times C_p\, \mathbb{E}\int_0^T\vert\varphi(s,\omega)\vert^p ds, \label{Burk}
\end{eqnarray}
where $C_p=max\{(\bar{c}_2)^{\frac{p}{2}},\bar{c}_p,(\bar{c}_1)^p\}$ and $C$ is a constant depending on $p$ and $T$.
\end{remark}

\begin{proof}
By compensating $N_\nu$, using Burkholder's inequality and (\ref{cbar}), we have
\begin{eqnarray*}
&&\mathbb{E}\vert\int_0^T\int_{[1,\infty)}\Phi(s,z,\omega)N_\nu(ds,dz)\vert^p\leq C(\mathbb{E}\vert\int_0^T\int_{[1,\infty)}\Phi(s,z,\omega)\widetilde{N_\nu}(ds,dz)\vert^p+\mathbb{E}\vert\int_0^T\int_{[1,\infty)}\Phi(s,z,\omega)\nu(dz)ds\vert^p)\\
&&\leq C(\mathbb{E}\int_0^T(\int_{[1,\infty)}\vert\Phi(s,z,\omega)\vert^2\nu(dz))^{\frac{p}{2}}ds+\mathbb{E}\int_0^T\int_{[1,\infty)}\vert\Phi(s,z,\omega)\vert^p\nu(dz)ds+\mathbb{E}\int_0^T\vert\int_{[1,\infty)}\Phi(s,z,\omega)\nu(dz)\vert^{p}ds)\\
&&\leq C\times C_p\mathbb{E}\int_0^T\vert\varphi(s,\omega)\vert^pds.
\end{eqnarray*}
\end{proof}\medskip 

\noindent \textbf{Hypothesis 2.2}
There exists a non-negative function $\breve{c}:[1,\infty)\rightarrow\mathbb{R}_{+}$ such that 
$\int_{[1,\infty)}\vert \breve{c}(z)\vert^p\nu(dz)=:\breve{c}_p<\infty,\quad\forall p\geq1$, and
\[
\Big\vert \frac{\partial_{x}\widetilde{c}(s,z,x)}{1+\partial_{x}\widetilde{c}(s,z,x)}\Big\vert\leq\breve{c}(z),\quad\forall s\in[0,T], x\in\mathbb{R}, z\in[1,\infty).
\]
To avoid overburdening notation, since both hypotheses 2.1 and 2.2 apply, we will take $\breve{c}(z)=\bar{c}(z)$.
\medskip

\noindent\textbf{Hypothesis 2.3} (\textbf{Ellipticity})
There exists a non-negative function $\underline{c}:[1,\infty)\rightarrow\mathbb{R}_{+}$ such that for every $s\in[0,T], x\in\mathbb{R}, z\in[1,\infty)$,
\[
\left\vert \partial _{z}\widetilde{c}(s,z,x)\right\vert ^{2}\geq \underline{c}%
(z)\quad and\quad \left\vert \widetilde{c}(s,z,x)\right\vert ^{2}\geq \underline{c}%
(z).
\]\medskip

\noindent \textbf{Hypothesis 2.4} (\textbf{Sector condition})
This is a supplementary hypothesis concerning the measure $\nu$. Two version of this hypothesis will be used; we state them separately below. 
Let $I_k=[k,k+1), k\in\mathbb{N}$ and $m_{k}=\nu(I_{k})$.
\smallskip 

\noindent $(a)$ \textbf{Strong sector condition}:
We say that the strong sector condition is satisfied if
there exist constants $\varepsilon_{\ast}>0$ and $\alpha_1\geq \alpha_0 >\alpha_2>0$, such that
\begin{eqnarray}
\mathbbm{1}_{I_k}(z)\frac{\nu (dz)}{m_k}&\geq&  \mathbbm{1}_{I_k}(z)\frac{\varepsilon_{\ast}}{ z^{1-\alpha_1 }}dz  \quad\ \ \text{for all $k\in\mathbb{N}$},  \label{cm9}\\
\underline{c}(z)&\geq& e^{- z^{\alpha_2}}\qquad\quad\  \qquad\text{ for all $z \geq 1$ and,}\nonumber  \\
\int_1^\infty\frac{\vert\bar{c}(z)\vert^p}{z^{1-\alpha_0}}dz &<& \infty\qquad\qquad\qquad\quad \text{for all $p\geq1$}. \label{aLfault}
\end{eqnarray}
Notice that if (\ref{cm9}) is true for some $\alpha_1$, then it is also true for any ${\alpha}\leq\alpha_1$. So (\ref{cm9}) also implies
\begin{eqnarray*}
\mathbbm{1}_{I_k}(z)\frac{\nu (dz)}{m_k}\geq  \mathbbm{1}_{I_k}(z)\varepsilon_kdz,\quad with\quad
\varepsilon_k=\frac{\varepsilon_\ast}{(k+1)^{1-{\alpha}}},\label{cm9bit}
\end{eqnarray*}for any ${\alpha}\leq\alpha_1$.


\smallskip 

\noindent $(b)$ \textbf{Weak sector condition}:
We say that the weak sector condition holds if there exist constants $\varepsilon_{\ast}>0$ and $\alpha>0$, such that for every $k\in\mathbb{N}$, we have
\begin{eqnarray}
\mathbbm{1}_{I_k}(z)\frac{\nu (dz)}{m_k}&\geq&  \mathbbm{1}_{I_k}(z)\frac{\varepsilon_{\ast}}{ z}dz  \qquad\quad \text{for all $k\in\mathbb{N}$},  \label{cm10}\\
\underline{c}(z)&\geq&  \frac{1}{z^{\alpha }} \qquad\qquad\  \qquad\text{ for all $z \geq 1$ and},\nonumber \\
\int_1^\infty\frac{\vert\bar{c}(z)\vert^p}{z}dz&<&\infty\qquad\qquad\qquad\ \ \text{ for all $p\geq1$}. \label{bLfault}
\end{eqnarray}
We notice that (\ref{cm10}) also implies $\mathbbm{1}_{I_k}(z)\frac{\nu (dz)}{m_k}\geq  \mathbbm{1}_{I_k}(z)\varepsilon_kdz$ with
$\varepsilon_k=\frac{\varepsilon_\ast}{k+1}.$
\medskip

\begin{remark}
	Notice that hypotheses 2.1, 2.2 and 2.3 are analogous to those in $\cite{ref10}$ $(2-7,2-26,2-24)$.
\end{remark}
\smallskip 

{\bf Henceforth, we will suppose that hypotheses 2.1-2.3 hold, as well as either 2.4$(a)$ or 2.4$(b)$. }

\subsection{Approximation}
We come back now to equation ($\ref{1.1}$). The goal of this paper is to replace the small jumps in ($\ref{1.1}$) by a drift and a Brownian motion. In equation ($\ref{1.1}$), the Poisson point measure $N_\mu$ is not compensated, so the first step is to introduce a drift (see $b_\varepsilon$ below) which represents the compensator. Afterwards, we introduce a space-time Brownian motion $W_\mu$ in order to replace the "compensated small jumps": 
\begin{eqnarray}
X_{t}^{\varepsilon } &=&x+\int_{0}^{t}\int_{\{z >
\varepsilon \}}c(s,z,X_{s-}^{\varepsilon })N_{\mu
}(ds,dz)   \nonumber\\
&&+\int_{0}^{t}b_{\varepsilon }(s,X_{s}^{\varepsilon
})ds+\int_{0}^{t}\int_{(0,\varepsilon]}c(s,z,X_s^{\varepsilon})W_\mu(dz,ds),  \label{1.4}
\end{eqnarray}%
where%
\begin{eqnarray*}
b_{\varepsilon }(s,x)=\int_{(0,\varepsilon]
}c(s,z,x)\mu (dz)
\end{eqnarray*}%
and $W_{\mu }$ is a space-time Brownian motion with covariance measure $\mu(dz)ds$, which is independent of $N_\mu$.

Let us discuss this equation. We notice that we keep the
"big jumps" with $z > \varepsilon $ but we
eliminate the "small jumps" with $z \leq\varepsilon .$
We replace the "small jumps" by the drift with coefficient $b_{\varepsilon }$
and by the stochastic integral with coefficient $c.$ 
This stochastic integral is driven by the so called space-time Brownian motion $%
W_{\mu }$, as
introduced by Walsh in $\cite{ref15}$. The existence and uniqueness of the solution to this equation ($\ref{1.4}$) are also given by Kunita (see $\cite{ref20},\cite{ref13}$). 

We recall that we work on a fixed interval of time $[0,T]$. We now precise the filtration that we consider. Let
\begin{eqnarray}
&\mathcal{F}_t^W=\sigma(W_\mu(\varphi\mathbbm{1}_{[0,t]}):\  \varphi\in L^2((0,1]\times[0,T],\mu\times Leb)),  \nonumber\\
&\mathcal{F}_t^N=\sigma(N_\mu(\varphi\mathbbm{1}_{[0,t]}):\  \varphi\in L^1((0,1]\times[0,T],\mu\times Leb)),  \nonumber\\
&\mathcal{F}_t=\mathcal{F}_t^W\bigvee\mathcal{F}_t^N, \label{Ft}
\end{eqnarray}
where $Leb$ denotes the Lebesgue measure and
\[
W_\mu(\varphi)=\int_0^T\int_{(0,1]}\varphi(s,z)W_\mu(ds,dz),\quad N_\mu(\varphi)=\int_0^T\int_{(0,1]}\varphi(s,z)N_\mu(ds,dz).
\]
So, $X_t^\varepsilon$ is $\mathcal{F}_t-$measurable and $X_t$ is $\mathcal{F}_t^N-$measurable.

We denote 
\begin{eqnarray}
L^2(W)=\{F\in\mathcal{F}_T^W:\mathbb{E}\vert F\vert^2<\infty\},\quad L^2(N)=\{G\in\mathcal{F}_T^N:\mathbb{E}\vert G\vert^2<\infty\}.  \label{ap}
\end{eqnarray}

\begin{remark}
    Let $\Phi$ be an adapted and piecewise constant process, that is 
    \[
    \Phi(s,z,\omega)=\sum_{i=1}^n\sum_{j=1}^m\Phi_{i,j}(\omega)\mathbbm{1}_{[s_i,s_{i+1})}(s)\mathbbm{1}_{B_j}(z),
    \]where $0\leq s_1<\cdots<s_n, i=1,\cdots,n,$ $B_j\in\mathcal{B}((0,1]), j=1,\cdots,m,$ are disjoint sets. Suppose that $\Phi_{i,j}$ are $\mathcal{F}^W_{s_i}-$measurable for all $j=1,\cdots,m$, and $\sup\limits_{i,j}\mathbb{E}\vert\Phi_{i,j}\vert^2<\infty.$
    Then for every $G\in L^2(N)$, we have
    \begin{eqnarray}
    \mathbb{E}\Big[G\times\int_{0}^{T}\int_{(0,1] } \Phi(s,z,\omega)W_{\mu }(ds,dz)\Big]=0.  \label{LWN}
    \end{eqnarray}
\end{remark}
\begin{proof}
Since $W_\mu([s_i,s_{i+1})\times B_j)$ is centered and independent of $\Phi_{i,j}$ and of $G$, for all $i=1,\cdots,n,\ j=1,\cdots,m$, it follows that $\mathbb{E}[G\Phi_{i,j}W_\mu([s_i,s_{i+1})\times B_j)]=0$. Then it extends by linearity, and so (\ref{LWN}) is true.
\end{proof}

Now we write the infinitesimal operator of $X_s$ and $X_s^{\varepsilon}$, respectively: For $\phi\in C_b^3(\mathbb{R})$ (the space of functions with continuous and bounded derivatives up to order 3),
\begin{eqnarray}
L_{s}\phi (x)&=&\int_{(0,1]}(\phi (x+c(s,z,x))-\phi (x))\mu (dz) \qquad \text{ and}\nonumber\\
L_{s}^{\varepsilon }\phi (x) &=&\int_{\{z >\varepsilon \}}(\phi (x+c(s,z,x))-\phi (x))\mu (dz) \label{1.2} 
+ \phi' (x)b_{\varepsilon }(s,x) +\tfrac{1}{2}\phi'' (x)a_{\varepsilon}(s,x), 
\end{eqnarray}%
where 
\[
a_{\varepsilon}(s,x)=\int_{(0,\varepsilon]}\vert c(s,z,x)\vert^2\mu(dz).
\]
Using Taylor's formula of order 2, we find  
\begin{eqnarray*}
L_{s}\phi (x) =\int_{\{z >\varepsilon \}}(\phi (x+c(s,z,x))-\phi (x))\mu (dz)  +\phi' (x)b_{\varepsilon }(s,x) +\tfrac{1}{2}\phi'' (x)a_{\varepsilon}(s,x) +R_s(x),
\end{eqnarray*}%
where \begin{eqnarray*}
\vert R_s(x)\vert &\leq&\tfrac{1}{6}\Vert\phi\Vert_{3,\infty}\int_{(0,\varepsilon]}\vert c(s,z,x)\vert^3\mu (dz),
\end{eqnarray*}
with $\left\Vert \phi \right\Vert _{\ell,\infty }:=\sum\limits_{0\leq i\leq \ell}\Vert \phi^{(i)}\Vert_\infty$, the sum of all the uniform norms of the derivatives of function $\phi$ up to order~$\ell$. In conclusion, we find  
\begin{eqnarray}
\left\Vert (L_{s}-L_{s}^{\varepsilon })\phi\right\Vert_\infty
=\left\Vert R_s\right\Vert_\infty
\leq\tfrac{1}{6}\left\Vert \phi \right\Vert _{3,\infty }\eta _{3}(\varepsilon ), \label{3.00}
\end{eqnarray}
with 
\begin{equation}
    \eta _{p}(\varepsilon )
    =\int_{(0,\varepsilon]}\left\vert \bar{c}(1/z)\right\vert ^{p}\mu (dz)
    =\int_{[\varepsilon^{-1},\infty)}\left\vert \bar{c}(z)\right\vert ^{p}\nu (dz),
    \quad p\geq1. \label{1.5}
\end{equation}
Then, we can give an estimate of the distance between the semigroups. We use the standard semigroup notation, which we remind below. Let $[X_t(s,x)]_{t\geq s}$ and $[X_t^\varepsilon(s,x)]_{t\geq s}$ be the solutions to ($\ref{1.1}$) and ($\ref{1.4}$), respectively, starting at time $s$ from point $x$. Denote by $P_{s,t}\phi(x)=\mathbb{E}\phi(X_t(s,x))$ and $P_{s,t}^\varepsilon\phi(x)=\mathbb{E}\phi(X_t^\varepsilon(s,x))$. Also, set $P_{t}:=P_{0,t}$ and $P_{t}^{\varepsilon }:=P_{0,t}^\varepsilon$. 

\begin{lemma}
	There exists a constant $C$ depending on $T$ such that for $\phi\in C_b^3(\mathbb{R})$ and $0\leq t\leq T$, we have
	\begin{equation}
		\left\Vert P_{t}\phi-P_{t}^{\varepsilon }\phi \right\Vert _{\infty }\leq
		C\left\Vert \phi \right\Vert _{3,\infty }\eta _{3}(\varepsilon ). \label{1.21}
	\end{equation}
\end{lemma}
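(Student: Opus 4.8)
\emph{Overall approach.} The plan is to combine the one-step error estimate (\ref{3.00}) with a Trotter--Kato type interpolation between the two semigroups. Fix $0\le t\le T$ and $\phi\in C_b^3(\mathbb{R})$, and set $\Psi(s):=P_{0,s}\big(P_{s,t}^{\varepsilon}\phi\big)$ for $s\in[0,t]$. Since $P_{t,t}^{\varepsilon}$ and $P_{0,0}$ are the identity, $\Psi(0)=P_t^{\varepsilon}\phi$ and $\Psi(t)=P_t\phi$, so $P_t\phi-P_t^{\varepsilon}\phi=\int_0^t\partial_s\Psi(s)\,ds$. Differentiating in $s$ and using the forward Kolmogorov equation for the law of $X$, namely $\partial_s P_{0,s}g=P_{0,s}(L_s g)$, together with the backward Kolmogorov equation for $X^{\varepsilon}$, namely $\partial_s\big(P_{s,t}^{\varepsilon}\phi\big)=-L_s^{\varepsilon}\big(P_{s,t}^{\varepsilon}\phi\big)$, the two terms containing $L_s^{\varepsilon}$ cancel and we are left with the interpolation identity
\[
\partial_s\Psi(s)=P_{0,s}\Big(\big(L_s-L_s^{\varepsilon}\big)\big(P_{s,t}^{\varepsilon}\phi\big)\Big),\qquad\text{hence}\qquad P_t\phi-P_t^{\varepsilon}\phi=\int_0^t P_{0,s}\Big(\big(L_s-L_s^{\varepsilon}\big)\big(P_{s,t}^{\varepsilon}\phi\big)\Big)\,ds .
\]

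\emph{Estimating the integrand.} Since $P_{0,s}$ is a Markov operator it is a contraction on $L^\infty$, so $\|P_{0,s}g\|_\infty\le\|g\|_\infty$. Applying (\ref{3.00}) to the function $P_{s,t}^{\varepsilon}\phi$ (which is $C_b^3$, see below) gives
\[
\big\|\big(L_s-L_s^{\varepsilon}\big)\big(P_{s,t}^{\varepsilon}\phi\big)\big\|_\infty\le\tfrac16\,\eta_3(\varepsilon)\,\big\|P_{s,t}^{\varepsilon}\phi\big\|_{3,\infty}.
\]
Integrating over $s\in[0,t]\subset[0,T]$, everything reduces to the a priori bound $\sup_{0\le s\le t\le T}\|P_{s,t}^{\varepsilon}\phi\|_{3,\infty}\le C\|\phi\|_{3,\infty}$ with $C$ independent of $\varepsilon$; this yields $\|P_t\phi-P_t^{\varepsilon}\phi\|_\infty\le \tfrac{T}{6}\,C\,\|\phi\|_{3,\infty}\,\eta_3(\varepsilon)$, which is the claim.

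\emph{The a priori bound, and the main obstacle.} This last bound is where the real work lies. I would differentiate the flow $x\mapsto X_t^{\varepsilon}(s,x)$ of (\ref{1.4}) up to order $3$; the derivative processes $\partial_x X_t^{\varepsilon}$, $\partial_x^2 X_t^{\varepsilon}$, $\partial_x^3 X_t^{\varepsilon}$ solve linear jump SDEs whose coefficients are polynomial expressions in $\partial_x^j c$, $j\le 3$. Using the Burkholder-type estimate (\ref{Burk}) for the $N_\mu$-part (after the change of variables $z\mapsto 1/z$), its martingale analogue for the $W_\mu$-part, the bound on $b_\varepsilon$ coming from $\bar c_1<\infty$, the pointwise bounds $|c|,|\partial_x^j c|\le\bar c(1/z)$ with $\bar c_p<\infty$ for every $p$, and Gronwall's lemma, one gets $\sup_{\varepsilon}\mathbb{E}\,|\partial_x^j X_t^{\varepsilon}(s,x)|^p\le C_p$ for all $p\ge 1$ and $j\le 3$, uniformly in $s,t\in[0,T]$ and $x$. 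Differentiating under the expectation and applying the chain rule then gives $\|\partial_x^j P_{s,t}^{\varepsilon}\phi\|_\infty\le C\|\phi\|_{j,\infty}$ for $j\le 3$, hence $P_{s,t}^{\varepsilon}\phi\in C_b^3$ with the required control. I expect the uniform-in-$\varepsilon$ moment bounds on the derivative flows to be the main obstacle: this is precisely where Hypotheses 2.1--2.2 (smoothness of $c$ together with $\bar c_p<\infty$ for all $p$) are used.

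\emph{A secondary technical point.} One must also justify rigorously, in the first step, the forward and backward Kolmogorov equations and the differentiability of $s\mapsto\Psi(s)$ for the jump equations (\ref{1.1}) and (\ref{1.4}). This is classical once the coefficients are bounded and smooth and $P_{s,t}^{\varepsilon}\phi\in C_b^3$, and can be carried out via the mild (Duhamel) form of the equations, but it deserves to be spelled out because the drift $b_\varepsilon$ and the Gaussian coefficient themselves depend on $\varepsilon$, so uniformity of the resulting constants has to be tracked.
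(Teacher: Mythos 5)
Your overall approach is sound, and the estimate follows, but you have taken the mirror image of the paper's decomposition, and that choice has a real consequence for where the work lands. You interpolate via $\Psi(s)=P_{0,s}\bigl(P_{s,t}^{\varepsilon}\phi\bigr)$, so that $\partial_s\Psi(s)=P_{0,s}\bigl((L_s-L_s^{\varepsilon})P_{s,t}^{\varepsilon}\phi\bigr)$, and hence you need the regularity bound for $\|P_{s,t}^{\varepsilon}\phi\|_{3,\infty}$ \emph{uniformly in $\varepsilon$}. You rightly flag this as the main obstacle and sketch a Gronwall argument for the derivative flows of (\ref{1.4}), which does go through because $b_\varepsilon$, its $x$-derivatives, and the Gaussian quadratic variation are all bounded uniformly in $\varepsilon$ via $\bar c_1,\bar c_2<\infty$. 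The paper instead interpolates with $P_{0,s}^{\varepsilon}P_{s,t}$, which gives $\partial_s\Psi(s)=P_{0,s}^{\varepsilon}\bigl((L_s^{\varepsilon}-L_s)P_{s,t}\phi\bigr)$; after the $L^\infty$-contraction of $P_{0,s}^{\varepsilon}$, the test function fed into (\ref{3.00}) is $P_{s,t}\phi$, i.e.\ the semigroup of the \emph{original} equation, which does not depend on $\varepsilon$ at all. Step 2 of the paper then just cites Kunita's flow-regularity theorems for the fixed jump equation (\ref{1.1}) and gets $\|P_{s,t}\phi\|_{3,\infty}\le C\|\phi\|_{3,\infty}$ for free, with no $\varepsilon$-uniformity to track. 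Both routes prove the same estimate; the paper's ordering is simply the more economical one, since it transfers the regularity requirement from the $\varepsilon$-family of approximating equations to the single limiting equation. One small wording slip in your write-up: there is no cancellation of ``the two terms containing $L_s^{\varepsilon}$'' — the Leibniz rule produces exactly one term with $L_s$ (from the forward equation for $P_{0,s}$) and one with $-L_s^{\varepsilon}$ (from the backward equation for $P_{s,t}^{\varepsilon}$), and they combine, not cancel; your displayed identity is nevertheless the correct one.
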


\begin{proof}
\textbf{Step 1 Trotter-Kato method: }We know from Kunita $\cite{ref13}$ (Theorem 4.5.1) that we have the Kolmogorov forward and backward equations:
\begin{eqnarray}
&&\partial_tP_{s,t}\phi(x)=P_{s,t}L_t\phi(x),\quad \quad \partial_tP_{s,t}^\varepsilon\phi(x)=P_{s,t}^\varepsilon L_t^\varepsilon\phi(x);  \label{special}\\
&&\partial_sP_{s,t}\phi(x)=-L_sP_{s,t}\phi(x),\quad  \partial_sP_{s,t}^\varepsilon\phi(x)=-L_s^\varepsilon P_{s,t}^\varepsilon\phi(x).  \label{special*}
\end{eqnarray}

Then using Newton-Leibniz's formula and (\ref{special}), (\ref{special*}),%
\[
P_{t}^{\varepsilon }\phi (x)-P_{t}\phi (x)=\int_{0}^{t}\partial
_{s}(P_{0,s}^{\varepsilon }P_{s,t})\phi
(x)ds=\int_{0}^{t}(P_{0,s}^{\varepsilon
}(L_{s}^{\varepsilon }-L_{s})P_{s,t})\phi (x)ds.
\]%
It follows that
\begin{eqnarray}
\left\Vert P_{t}\phi-P_{t}^{\varepsilon }\phi \right\Vert _{\infty } &\leq
&\int_{0}^{t}\left\Vert P_{0,s}^{\varepsilon
}(L_{s}^{\varepsilon }-L_{s})P_{s,t}\phi \right\Vert _{\infty }ds \nonumber\\
&\leq &\int_{0}^{t}\left\Vert (L_{s}^{\varepsilon }-L_{s})P_{s,t}\phi \right\Vert _{\infty }ds \nonumber\\
&\leq &\tfrac{1}{6} \,\eta _{3}(\varepsilon )\int_{0}^{t}\left\Vert P_{s,t}\phi \right\Vert
_{3,\infty }ds.   \label{3.01}
\end{eqnarray}%

\textbf{Step 2 (propagation of regularity) }
In $\cite{ref13}$, Kunita has shown in Theorem 3.4.1 and Theorem 3.4.2 the regularity of the flow associated with the jump-diffusion. So in our case, we have 
\begin{eqnarray}
\left\Vert P_{s,t}\phi \right\Vert _{3,\infty }&\leq&\sup\limits_{x\in\mathbb{R}}(\mathbb{E}\vert\phi(X_t(s,x))\vert+\mathbb{E}\vert\partial_x\phi(X_t(s,x))\vert+\mathbb{E}\vert\partial_x^2\phi(X_t(s,x))\vert+\mathbb{E}\vert\partial_x^3\phi(X_t(s,x))\vert)   \nonumber\\
&\leq&\left\Vert
\phi \right\Vert _{3,\infty }\sup\limits_{x\in\mathbb{R}}\mathbb{E}[1+3\vert\partial_xX_{t}(s,x)\vert+3\vert\partial_x^2X_{t}(s,x)\vert+\vert\partial_x^3X_{t}(s,x)\vert]\leq {C}\left\Vert
\phi \right\Vert _{3,\infty }.  \label{1.6}
\end{eqnarray}
Substituting (\ref{1.6}) into (\ref{3.01}), we obtain (\ref{1.21}).
\end{proof}

\begin{remark}
A similar result has been obtained in $\cite{ref32}$ (Theorem 4.7).
Besides, one may also consider an approximate equation obtained just by
discarding the small jumps: 
\[
\widetilde{X_{t}^{\varepsilon }}=x+\int_{0}^{t}\int_{\{z >
\varepsilon \}}c(s,z,\widetilde{X_{s-}^{\varepsilon }})N_{\mu
}(ds,dz). 
\]
Then, if $\widetilde{L_s^\varepsilon}$ is the infinitesimal operator of $\widetilde{X_{s}^{\varepsilon }}$, we have $\Vert(L_s-\widetilde{L_s^\varepsilon})\phi\Vert_\infty\leq\Vert\phi\Vert_{1,\infty}\eta_1(\varepsilon)$.
So the same reasoning as above gives
\begin{equation}
\big\Vert P_{t}\phi -\widetilde{P_{t}^{\varepsilon }}\phi \big\Vert _{\infty }\leq
C\left\Vert \phi \right\Vert _{1,\infty }\times \eta _{1}(\varepsilon
)\rightarrow 0.  \label{1.23}
\end{equation}%

The gain in (\ref{1.21}) is that we have $\eta _{3}(\varepsilon )$
instead of $\eta_{1}(\varepsilon )$ in (\ref{1.23}), which means that we have a faster speed of convergence.
\end{remark}

\subsection{The main theorem}

We are finally ready to state the main results of this paper. 
Denote by $d_{TV}(F,G)$ the total variation distance between the laws of two random variables $F$ and $G$.

\begin{theorem}
Assume that \textbf{Hypotheses 2.1, 2.2. and 2.3} hold with $q^\ast\geq \frac{3}{\delta}+1$ for some $\delta  > 0$.
\smallskip 

(a) If in addition we assume \textbf{Hypothesis 2.4} (a), 
then there exists a constant $C$ depending on $\delta$ and $T$ such that
\begin{eqnarray}
	d_{TV}(X_t,X_t^\varepsilon)\leq C\eta _{3}(\varepsilon )^{1-\delta }. \label{4.000}
\end{eqnarray}

Under the above hypotheses, the laws of $X_t$ and $X_t^\varepsilon$ are absolutely continuous with respect to the Lebesgue measure, 
with smooth densities $p_{X_t}(x)$ and $p_{X_t^\varepsilon}(x)$.
Moreover, if $\ell$ is an index such that $q^{*}\geq \frac{3+\ell}{\delta}+1$, then there exists a constant ${C}$ depending on $\delta$, $T$ and $\ell$ such that 
\begin{eqnarray}
	\Vert p_{X_t}- p_{X_t^\varepsilon}\Vert_{\ell,\infty}\leq {C}\eta _{3}(\varepsilon )^{1-\delta }.  \label{4.00000}
\end{eqnarray}

(b) If in addition we assume \textbf{Hypothesis 2.4} (b),
then there exists a constant $C$ depending on $\delta$ and $T$ such that for every $t\in[0,T]$ 
with $t>\frac{8\alpha( \frac{3}{\delta}-1)}{\varepsilon_{\ast}}$ (with $\varepsilon_\ast$ and $\alpha$ given in \textbf{Hypothesis 2.4}\ (b)), we have
\begin{eqnarray}
	d_{TV}(X_t,X_t^\varepsilon)\leq C\eta _{3}(\varepsilon )^{1-\delta }. \label{4.b1}
\end{eqnarray}

For any index $\ell$ and for $t>\frac{8\alpha(3 \ell +2)}{\varepsilon_\ast}$, both the laws of $X_t$ and $X_t^\varepsilon$ have $\ell$-times differentiable densities $p_{X_t}(x)$ and $p_{X_t^\varepsilon}(x)$. Assume moreover that $q^\ast\geq \frac{3+\ell}{\delta}+1$. 
Then there exists a constant ${C}$ depending on $\delta$ ,$T$ and $\ell$ such that for 
$t>\max\{\frac{8\alpha}{\varepsilon_\ast}( \frac{3+\ell}{\delta}-1),\frac{8\alpha(3 \ell +2)}{\varepsilon_\ast}\}$, we have
\begin{eqnarray}
	\Vert p_{X_t}- p_{X_t^\varepsilon}\Vert_{\ell,\infty}\leq {C}\eta _{3}(\varepsilon )^{1-\delta }.   \label{4.b2}
\end{eqnarray}
\end{theorem}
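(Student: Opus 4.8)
The plan is to combine the weak-error estimate of Lemma~2.1 with a Malliavin-calculus regularization, using the abstract integration-by-parts machinery of $\cite{ref7}$ that will be recalled in Section~3. Lemma~2.1 already gives, for every $t\in[0,T]$,
\[
d_3(X_t,X_t^\varepsilon):=\sup_{\Vert\phi\Vert_{3,\infty}\leq 1}\big|\mathbb{E}\phi(X_t)-\mathbb{E}\phi(X_t^\varepsilon)\big|\leq C\,\eta_3(\varepsilon),
\]
i.e. convergence with the right rate but only against three-times-differentiable test functions. The abstract framework provides a regularization/interpolation inequality of the following type: if a random variable $F$ is non-degenerate in the Malliavin sense and has Malliavin--Sobolev norms of order $q$ under control, then for any $G$ and any $\delta>0$ with $q\geq\tfrac{3}{\delta}+1$ one has $d_{TV}(F,G)\leq C\big(d_3(F,G)\big)^{1-\delta}$, and, if moreover $q\geq\tfrac{3+\ell}{\delta}+1$, then $F$ and $G$ have $\ell$-times differentiable densities with $\Vert p_F-p_G\Vert_{\ell,\infty}\leq C\big(d_3(F,G)\big)^{1-\delta}$, the constant $C$ depending only on the Malliavin norms and on the inverse moments of the Malliavin covariance of $F$. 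Thus the whole proof reduces to verifying these Malliavin hypotheses for $X_t$ (equivalently $\widehat X_t$) and for $X_t^\varepsilon$, \emph{uniformly in} $\varepsilon$, and then quoting the inequality with $F=X_t$ (or $F=X_t^\varepsilon$) and $G=X_t^\varepsilon$ (resp. $X_t$), feeding in $d_3(X_t,X_t^\varepsilon)\leq C\eta_3(\varepsilon)$.

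The Malliavin setup is the reason for the change of variables $z\mapsto 1/z$: after it, $\widehat X_t$ is driven by a Poisson measure $N_\nu$ on $[1,\infty)$, and one differentiates with respect to the amplitudes of its jumps along the absolutely continuous slice of $\nu$ furnished by the sector condition $\mathbbm{1}_{I_k}(z)\tfrac{\nu(dz)}{m_k}\geq\mathbbm{1}_{I_k}(z)\varepsilon_k\,dz$; for $X_t^\varepsilon$ one adds the usual Malliavin derivative in the Gaussian direction $W_\mu$. The work then splits into three estimates, all carried out with the Burkholder-type bound of the Remark following Hypothesis~2.1 and with Hypothesis~2.2: (i) finiteness, uniformly in $\varepsilon$, of the Malliavin--Sobolev norms $\Vert X_t\Vert_{q,p}$, $\Vert X_t^\varepsilon\Vert_{q,p}$ for $q\leq q^\ast$ and all $p$; (ii) a lower bound on the Malliavin covariance $\sigma_{X_t}$ (resp. $\sigma_{X_t^\varepsilon}$): Hypothesis~2.3 gives $\vert\partial_z\widetilde c\vert^2\geq\underline c(z)$, which, summed against the absolutely continuous part of $\nu$ over the jumps occurring in $[0,t]$, yields $\sigma_{X_t}\gtrsim\Sigma_t$ for an explicit quantity $\Sigma_t$ built from $\underline c$, the $\varepsilon_k$ and the jump counts; (iii) finiteness of the inverse moments $\mathbb{E}[\sigma_{X_t}^{-p}]$ (and likewise with $\varepsilon$) for all $p$, obtained from (ii) by bounding the lower tail of $\Sigma_t$ via a ``many small jumps in a well-chosen block'' argument. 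Under the strong sector condition~2.4(a) the lower bounds in (ii)--(iii) hold for every $t>0$; under the weak sector condition~2.4(b) the quantity $\Sigma_t$ degenerates and $\mathbb{E}[\sigma_{X_t}^{-p}]<\infty$ only once $t$ exceeds a threshold proportional to $p\alpha/\varepsilon_\ast$, which is exactly the source of the restrictions $t>\tfrac{8\alpha(3/\delta-1)}{\varepsilon_\ast}$, $t>\tfrac{8\alpha(3\ell+2)}{\varepsilon_\ast}$, etc., in part~(b).

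With (i)--(iii) in hand, part~(a) is immediate: apply the abstract inequality with $q=q^\ast\geq\tfrac{3}{\delta}+1$ for \eqref{4.000} and $q^\ast\geq\tfrac{3+\ell}{\delta}+1$ for \eqref{4.00000}; the constants are uniform in $\varepsilon$ precisely because (i)--(iii) are. Part~(b) is the same argument, except that the admissible range of $t$ is cut down to the region where the inverse-moment bounds of the relevant order (of order $\sim 3/\delta$ for $d_{TV}$, of order $\sim(3+\ell)/\delta$ for $\Vert p_{X_t}-p_{X_t^\varepsilon}\Vert_{\ell,\infty}$, and of the order needed merely for existence of $\ell$ differentiable densities) are finite.

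I expect the main obstacle to be step~(iii) — the non-degeneracy, i.e. the inverse-moment estimate for the Malliavin covariance — carried out uniformly in $\varepsilon$ and, in case~(b), with sharp tracking of the $t$-dependence. The sector condition is tailored so that each block $I_k=[k,k+1)$ contributes an absolutely continuous piece of $\nu$ of density at least $\varepsilon_k$, along which the integration-by-parts weights are well defined; coupling this with the ellipticity lower bounds $\underline c(z)\geq e^{-z^{\alpha_2}}$ (strong case) or $z^{-\alpha}$ (weak case) and summing against $\nu$ to produce a genuine, quantitative lower bound for $\sigma_{X_t}$ — then converting that into finite negative moments — is the technical heart of the argument, and is where Hypotheses~2.3 and~2.4 together with the integrability conditions \eqref{aLfault} (resp. \eqref{bLfault}) get used. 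A secondary difficulty is checking that the extra Gaussian component in $X_t^\varepsilon$ does not spoil the jump-based non-degeneracy and that all Sobolev and covariance bounds are uniform as $\varepsilon\to 0$; this is handled by the Appendix estimates on Malliavin Sobolev norms for jump equations.
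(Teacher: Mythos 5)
Your strategy is the paper's: Lemma 2.1 gives $d_3(X_t,X_t^\varepsilon)\leq C\eta_3(\varepsilon)$, and the abstract regularization inequality of Section 3 converts this into bounds on $d_{TV}$ and on $\Vert p_{X_t}-p_{X_t^\varepsilon}\Vert_{\ell,\infty}$ at the cost of a power $1-\delta$, provided one controls Malliavin--Sobolev norms up to order roughly $q^\ast$ and negative moments of the Malliavin covariance; the splitting method along the sector condition together with the ellipticity hypothesis supplies exactly these (Lemmas 4.1 and 4.2), and the thresholds on $t$ in part (b) come from the order of the negative moment required ($p_1=2(\tfrac{3}{\delta}-1)$ for $d_{TV}$, $p_2=\max\{6\ell+4,\,2(\tfrac{3+\ell}{\delta}-1)\}$ for the densities) combined with the condition $t>4p\alpha/\varepsilon_\ast$ of Lemma 4.2(b). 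The one place where your plan asks for more than the paper proves is the requirement that the Malliavin hypotheses hold for $X_t$ itself: the limit equation has infinite jump activity, and the paper never shows $X_t\in\mathcal{D}_\infty$ nor bounds $\Sigma_p(X_t)$. Instead it invokes Corollary 3.4.1, which needs the Sobolev and non-degeneracy bounds only for the approximating family $\widehat X^M_t$ (i.e. $X^\varepsilon_t$), \emph{uniformly in} $M$, together with $d_3(X_t,X^M_t)\to 0$; the total variation estimate for the pair $(X_t,X_t^\varepsilon)$ then follows from a Cauchy argument in $d_{TV}$, and the smooth density of $X_t$ is obtained a posteriori as the $\Vert\cdot\Vert_{\ell,\infty}$-limit of the $p_{X_t^\varepsilon}$. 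If you insist on applying the two-variable inequality (3.12) directly with $F=X_t$, you would have to redo the entire Appendix analysis for the infinite-activity equation, which is avoidable and which the paper deliberately sidesteps.
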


The proof of this theorem is left to Section 4.4.
\medskip 

\begin{remark} Some recent results concerning the weak approximation of the SDE with jumps are also given in $\cite{ref39},\cite{ref40},\cite{ref41}$ for example. But they do not concern the convergence in total variation distance.
\end{remark}

\subsection{A typical example}
For $t\in[0,T]$, we consider the following SDE driven by a L\'evy process:
\begin{eqnarray}
X_t=x+\int_0^t\sigma(X_{s-})dZ_s, \label{ex1}
\end{eqnarray}
where $(Z_t)_{t\in[0,T]}$ is a L\'evy process of L\'evy triplet $(0,0,\mu)$, with
$\mu(dz)=\mathbbm{1}_{(0,1]}(z)\frac{dz}{ z^{1+\rho}}$, $0\leq \rho< 1$.

We approximate (\ref{ex1}) by
\begin{eqnarray}
X_t^{\varepsilon}=x+\int_0^t\sigma(X_{s-}^{\varepsilon})dZ_s^{\varepsilon}+b(\varepsilon)\int_0^t\sigma(X_{s}^{\varepsilon})ds+c(\varepsilon)\int_0^t\sigma(X_s^{\varepsilon})dB_s, \label{ex2}
\end{eqnarray}
where $(Z_t^{\varepsilon})_{t\in[0,T]}$ is a L\'evy process of L\'evy triplet $(0,0,\mathbbm{1}_{\{z>\varepsilon\}}\mu(dz))$, $(B_t)_{t\in[0,T]}$ is a standard Brownian motion independent of $(Z_t^{\varepsilon})_{t\in[0,T]}$, and
\[
	b(\varepsilon)=\int_{(0,\varepsilon]}z\mu(dz),\quad c(\varepsilon)=\sqrt{\int_{(0,\varepsilon]}z^2\mu(dz)}.
\]

Then we have the following theorem.
\begin{theorem}
	We assume that $\sigma\in C_b^\infty(\mathbb{R})$, $0<\underline{\sigma}\leq\sigma(x)\leq\bar{\sigma}$ and $-1<a\leq\sigma'(x)\leq\bar{\sigma},\ \forall x\in\mathbb{R}$, for some universal constants $\bar{\sigma},\underline{\sigma},a$, where $\sigma'$ is the differential of $\sigma$ in $x$. 
	Then for any $\delta>0$, there is a constant $C>0$ such that for any $t\in[0,T]$,
\[
d_{TV}(X_t,X_t^\varepsilon)\leq C\varepsilon^{3-\rho-\delta}.
\]
Moreover, the laws of $X_t$ and $X_t^\varepsilon$ have smooth densities $p_{X_t}(x)$ and $p_{X_t^\varepsilon}(x)$ respectively. And for any index $l$ and any $\delta>0$, there exists a constant $C>0$ such that 
\[
\Vert p_{X_t}- p_{X_t^\varepsilon}\Vert_{l,\infty}\leq C\varepsilon^{3-\rho-\delta}.
\]
\end{theorem}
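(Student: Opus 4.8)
The plan is to realize \eqref{ex1}--\eqref{ex2} as a particular case of the general equations \eqref{1.1}--\eqref{1.4}, to check that \textbf{Hypotheses 2.1--2.3} and \textbf{Hypothesis 2.4}\,(a) hold here for \emph{every} $q^\ast$, and then to read off the two estimates from \textbf{Theorem 2.2}\,(a) after an explicit computation of $\eta_3(\varepsilon)$. First I would reduce to the abstract setting. Since $0\le\rho<1$, $\int_{(0,1]}z\,\mu(dz)=\int_0^1 z^{-\rho}\,dz<\infty$, so $(Z_t)$ has finite variation and $\int_0^t\sigma(X_{s-})\,dZ_s=\int_0^t\int_{(0,1]}z\,\sigma(X_{s-})\,N_\mu(ds,dz)$ with the \emph{uncompensated} measure; hence \eqref{ex1} is \eqref{1.1} with $c(s,z,x)=z\,\sigma(x)$. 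For this coefficient, $b_\varepsilon(s,x)=\int_{(0,\varepsilon]}c(s,z,x)\,\mu(dz)=b(\varepsilon)\,\sigma(x)$, and the "big-jump" integrals match, i.e. $\int_0^t\int_{\{z>\varepsilon\}}z\,\sigma(X^\varepsilon_{s-})\,N_\mu(ds,dz)=\int_0^t\sigma(X^\varepsilon_{s-})\,dZ^\varepsilon_s$; moreover $M_t:=\int_0^t\int_{(0,\varepsilon]}z\,W_\mu(dz,ds)$ is a continuous martingale independent of $N_\mu$ with $\langle M\rangle_t=\bigl(\int_{(0,\varepsilon]}z^2\mu(dz)\bigr)t=c(\varepsilon)^2 t$, so $M\overset{d}{=}c(\varepsilon)B$ and $\int_0^t\int_{(0,\varepsilon]}\sigma(X^\varepsilon_s)\,z\,W_\mu(dz,ds)=\int_0^t\sigma(X^\varepsilon_s)\,dM_s$ has the law of $c(\varepsilon)\int_0^t\sigma(X^\varepsilon_s)\,dB_s$. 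By uniqueness in law for \eqref{1.4}, the laws of $X_t,X_t^\varepsilon$ in the statement coincide with those of the solutions of \eqref{1.1}--\eqref{1.4}.

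Next I would verify the hypotheses after the change of variables $\theta(z)=1/z$, which gives $\nu(dz)=z^{\rho-1}dz$ on $[1,\infty)$ and $\widetilde c(s,z,x)=\sigma(x)/z$. For $z\ge1$ and $\beta_1,\beta_2\le q^\ast$ one has $|\partial_z^{\beta_2}\partial_x^{\beta_1}\widetilde c(s,z,x)|\le\beta_2!\,\|\partial_x^{\beta_1}\sigma\|_\infty\,z^{-1}$, so \textbf{Hypothesis 2.1} holds with the decreasing function $\bar c(z)=K_{q^\ast}/z$ (a constant $K_{q^\ast}$ depending on $q^\ast$ and $\sigma$), and since $\rho<1$ we get $\bar c_p=K_{q^\ast}^p\int_1^\infty z^{\rho-1-p}dz<\infty$ for every $p\ge1$; because $\sigma\in C_b^\infty$ this is available for \emph{every} $q^\ast$. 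As $\sigma'\ge a>-1$ and $z\ge1$, $1+\sigma'(x)/z\ge\min(1,1+a)>0$, so $|\partial_x\widetilde c/(1+\partial_x\widetilde c)|\le C/z$ and \textbf{Hypothesis 2.2} holds with $\breve c=\bar c$. As $\sigma\ge\underline\sigma>0$, $|\widetilde c(s,z,x)|^2\ge\underline\sigma^2/z^4$ and $|\partial_z\widetilde c(s,z,x)|^2=\sigma(x)^2/z^4\ge\underline\sigma^2/z^4$, so \textbf{Hypothesis 2.3} holds with $\underline c(z)=\underline\sigma^2/z^4$. Finally, \textbf{Hypothesis 2.4}\,(a) follows from the explicit forms: the density $z^{\rho-1}$ of $\nu$ is uniformly comparable to a constant on each $I_k$ (indeed $z^{\rho-1}/m_k\ge(k+1)^{\rho-1}/k^{\rho-1}\ge 2^{\rho-1}$ on $I_k$), which yields \eqref{cm9} with $\alpha_1=1$; one then picks $\alpha_0\in(0,1)$ and $\alpha_2\in(0,\alpha_0)$, and \eqref{aLfault} holds for all $p\ge1$ since $\int_1^\infty z^{-p}z^{\alpha_0-1}dz<\infty$, while $\underline c(z)=\underline\sigma^2 z^{-4}$ decays only polynomially and hence dominates $e^{-z^{\alpha_2}}$ (for $\alpha_2,\varepsilon_\ast$ chosen suitably).

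To conclude, by \eqref{1.5}, $\eta_3(\varepsilon)=\int_{(0,\varepsilon]}|\bar c(1/z)|^3\mu(dz)=K_{q^\ast}^3\int_0^\varepsilon z^{2-\rho}dz=\frac{K_{q^\ast}^3}{3-\rho}\varepsilon^{3-\rho}$. Given $\delta>0$ (and assuming $\delta<3-\rho$, otherwise the claims are trivial), set $\delta':=\delta/(3-\rho)\in(0,1)$ and take $q^\ast\ge\frac{(3+\ell)(3-\rho)}{\delta}+1$; since \textbf{Hypothesis 2.1} holds for this $q^\ast$, \textbf{Theorem 2.2}\,(a) applies with parameter $\delta'$ and gives, for every $t\in[0,T]$,
\[
d_{TV}(X_t,X_t^\varepsilon)\le C\,\eta_3(\varepsilon)^{1-\delta'}=C'\,\varepsilon^{(3-\rho)(1-\delta')}=C'\,\varepsilon^{3-\rho-\delta},
\]
and likewise the laws of $X_t$ and $X_t^\varepsilon$ have smooth densities with $\|p_{X_t}-p_{X_t^\varepsilon}\|_{\ell,\infty}\le C\,\eta_3(\varepsilon)^{1-\delta'}=C'\,\varepsilon^{3-\rho-\delta}$. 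No lower bound on $t$ appears because we invoke part (a), not part (b), of \textbf{Theorem 2.2}.

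The hard part is not a new estimate but the bookkeeping: (i) matching the Gaussian component of \eqref{ex2} with a stochastic integral against $W_\mu$ in \eqref{1.4} through the bracket and uniqueness in law, and (ii) checking that the explicit functions $\widetilde c(s,z,x)=\sigma(x)/z$ and $\nu(dz)=z^{\rho-1}dz$ satisfy \textbf{Hypotheses 2.1--2.4}, with the integrability bounds \eqref{cbar}, \eqref{aLfault} and the ellipticity and sector lower bounds all holding uniformly and for arbitrarily large $q^\ast$. Everything after that --- the value $\eta_3(\varepsilon)\asymp\varepsilon^{3-\rho}$ and the choice $\delta'=\delta/(3-\rho)$ --- is routine.
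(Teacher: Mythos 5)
Your proof is correct and follows essentially the same route as the paper: identify $c(s,z,x)=z\sigma(x)$ so that \eqref{ex1}--\eqref{ex2} are instances of \eqref{1.1}--\eqref{1.4}, pass to $\widetilde c(s,z,x)=\sigma(x)/z$ and $\nu(dz)=z^{\rho-1}dz$, verify \textbf{Hypotheses 2.1--2.3} and \textbf{2.4}\,(a) with $\bar c(z)\asymp 1/z$ and $\underline c(z)\asymp z^{-4}$, compute $\eta_3(\varepsilon)\asymp\varepsilon^{3-\rho}$, and invoke \textbf{Theorem 2.2}\,(a). You are in fact a bit more explicit than the paper on two small points that it elides --- matching the Gaussian part of \eqref{ex2} with the $W_\mu$-integral in \eqref{1.4} via the bracket, and rescaling $\delta'=\delta/(3-\rho)$ so that $\eta_3(\varepsilon)^{1-\delta'}=\varepsilon^{3-\rho-\delta}$ --- but the argument is the same.
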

\begin{proof}
We notice that 
\[
Z_t=\int_0^t\int_{(0,1]}zN_\mu(ds,dz),
\]
where $N_\mu$ is a Poisson point measure with intensity $\mu(dz)ds$. Then (\ref{ex1}) coincides with (\ref{1.1}) with  
$c(s,z,x)=\sigma(x)z$,
and (\ref{ex2}) coincides with (\ref{1.4}) with $c(s,z,x)=\sigma(x)z$, $b_\varepsilon(s,x)=b(\varepsilon)\sigma(x)$, and $\int_{\{z\leq\varepsilon\}}zW_\mu(ds,dz)=c(\varepsilon)dB_s$.

Let $\theta:(0,1]\rightarrow[1,\infty)$ be a function defined by $\theta(z)=\frac{1}{z}$. By a change of variables, 
\[
\widetilde{c}(s,z,x)=c(s,\frac{1}{z},x)=\sigma(x)\times\frac{1}{z},\quad \nu(dz)=\mu\circ \theta^{-1}(dz)=\mathbbm{1}_{[1,\infty)}(z)\frac{dz}{z^{1-\rho}}.
\]

One can easily check that \textbf{Hypothesis 2.1} is verified (for every $q^\ast\in\mathbb{N}$) with
$\bar{c}(z)=\bar{\sigma}\times\frac{1}{z}$ and \[\int_{1}^{\infty}\vert\bar{c}(z)\vert^p\nu(dz)=\int_{1}^{\infty}\frac{\bar{\sigma}^p}{z^{p+1-\rho}}dz<\infty, \ \forall p\geq1.\]

We recall that $I_k=[k,k+1), k\in\mathbb{N}$ and $m_{k}=\nu(I_{k})$. Then for sufficiently large $z$, we have
\[
\min\{\vert \partial_z\widetilde{c}(s,z,x)\vert^2,\vert \widetilde{c}(s,z,x)\vert^2\}\geq\underline{\sigma}^2\times\frac{1}{z^4}\geq e^{-z^{\alpha_2 }},
\]
with some $0<\alpha_2<1$. We also have  \[\mathbbm{1}_{I_k}(z)\frac{\nu(dz)}{m_{k}}\geq\mathbbm{1}_{I_k}(z)\frac{1}{2}dz\geq \mathbbm{1}_{I_k}(z)\frac{1}{2}\frac{dz}{z^{1-\alpha_0}},\] with some $\alpha_0\in(\alpha_2,1)$. Moreover, since for any $p\geq1$, $p+1-\alpha_0>1$, we have
\[
\int_1^\infty\frac{\vert\bar{c}(z)\vert^p}{z^{1-\alpha_0}}dz=\bar{\sigma}\int_1^\infty\frac{1}{z^{p+1-\alpha_0}}dz<\infty.
\]So \textbf{Hypothesis 2.3} and \textbf{Hypothesis 2.4} $(a)$ are satisfied. Finally,
\[
\big\vert\frac{\partial_x\widetilde{c}(s,z,x)}{1+\partial_x\widetilde{c}(s,z,x)}\big\vert\leq \frac{\bar{\sigma}\times\frac{1}{z}}{1+a\times\frac{1}{z}}\leq \max\{\frac{1}{1+a},1\}\times\bar{\sigma}\times\frac{1}{z},
\]
so \textbf{Hypothesis 2.2} is satisfied as well. 
Then we can apply \textbf{Theorem 2.2}$(a)$ for the equation (\ref{ex1}) and (\ref{ex2}). Since 
\[
\eta_3(\varepsilon)=\int_{(0,\varepsilon]}\bar{\sigma}^3\times z^3\mu(dz)=\frac{\bar{\sigma}^3}{3-\rho}\varepsilon^{3-\rho},
\]
we obtain the estimates from \textbf{Theorem 2.3}.
\end{proof}

\bigskip

\section{Abstract integration by parts framework} %
In order to obtain the main theorem (\textbf{Theorem 2.2}), we will apply some techniques of Malliavin calculus. So firstly, we give the abstract integration by parts framework  introduced in $\cite{ref7}$. This is a variant of the integration by parts framework given in $\cite{ref10}$.

We consider a probability space ($\Omega$,$\mathcal F$,$\mathbb{P}$), and a subset $\mathcal S\subset\mathop{\bigcap}\limits_{p=1}^\infty L^p(\Omega;\mathbb{R})$ such that for every $\phi\in C_p^\infty(\mathbb{R}^d)$ and every $F\in\mathcal S^d$, we have $\phi(F)\in\mathcal S$ (with $C_p^\infty$ the space of smooth functions which, together with all the derivatives, have polynomial growth). A typical example of $\mathcal{S}$ is the space of simple functionals, as in the standard Malliavin calculus. Another example is the space of "Malliavin smooth functionals".

Given a separable Hilbert space $\mathcal{H}$, we assume that we have a derivative operator $D: \mathcal S\rightarrow\mathop{\bigcap}\limits_{p=1}^\infty L^p(\Omega;\mathcal{H})$ which is a linear application which satisfies

$a)$ 
\begin{eqnarray}
D_hF:=\langle DF,h\rangle_\mathcal{H}\in\mathcal{S},\ for\ any\ h\in\mathcal{H}, \label{0.a}
\end{eqnarray}

$b)$ $\underline {Chain\ Rule}$: For every $\phi\in C_p^\infty(\mathbb{R}^d)$ and $F=
(F_1,\cdots,F_d)\in \mathcal S^d$, we have

\begin{align}
  D\phi(F)=\sum_{i=1}^d\partial_i\phi(F)DF_i , \label{0.00}
\end{align}

Since $D_hF\in\mathcal{S}$, we may define by iteration the derivative operator of higher order $D^q:\mathcal S\rightarrow\mathop{\bigcap}\limits_{p=1}^\infty L^p(\Omega;\mathcal{H}^{\otimes q})$ which verifies
$\langle D^qF,\otimes_{i=1}^q h_i\rangle_{\mathcal{H}^{\otimes q}}=D_{h_q}D_{h_{q-1}}\cdots D_{h_1}F$. We also denote $D_{h_1,\cdots,h_q}^qF:=D_{h_q}D_{h_{q-1}}\cdots D_{h_1}F$. Then, $D_{h_1,\cdots,h_q}^qF=$
$D_{h_q}D_{h_1,\cdots,h_{q-1}}^{q-1}F$ ($q\geq2$).

For $F=(F_1,\cdots,F_{\Tilde{d}})\in\mathcal{S}^{\Tilde{d}}$, we define $\sigma_F=(\sigma_F^{i,j})_{i,j=1,\cdots,{\Tilde{d}}}$ to be the Malliavin covariance matrix
 with $\sigma_F^{i,j}=\langle DF_i,DF_j\rangle_\mathcal{H}$ and we denote 
\begin{eqnarray}
\Sigma_p(F)=\mathbb{E}(1/ \det \sigma_F)^p.  \label{sigma}
\end{eqnarray}
For ${\Tilde{d}}=1$, which is the case that we discuss in this paper, $\det\sigma_F=\sigma_F=\langle DF,DF\rangle_\mathcal{H}$.
We say that $F$ is non-degenerated if $\Sigma_p(F)<\infty$, $\forall p\geq1$.

We also assume that we have an Ornstein-Uhlenbeck operator $L:\mathcal S\rightarrow\mathcal S$ which is a linear operator satisfying the following duality formula:\\
$\underline {Duality}$: For every $F,G\in\mathcal S$,

\begin{align}
\mathbb{E}\langle DF,DG\rangle_\mathcal{H}=\mathbb{E}(FLG)=\mathbb{E}(GLF).
\label{0.01}
\end{align}
As an immediate consequence of the duality formula, we know that $L: \mathcal{S}\subset L^2(\Omega)\rightarrow L^2(\Omega)$ is closable.

\begin{definition}
If $D^q: \mathcal{S}\subset L^2(\Omega)\rightarrow L^2(\Omega;\mathcal{H}^{\otimes q})$, $\forall q\geq1$, are closable, then the triplet $(\mathcal{S},D,L)$ will be called an IbP (Integration by Parts) framework.
\end{definition}

Now, we introduce the Sobolev norms. For any $l\geq1$, $F\in\mathcal{S}$,
\begin{eqnarray}
\left\vert F\right\vert_{1,l} &=&\sum_{q=1}^{l}\left\vert D^{q}F\right\vert_{\mathcal{H}^{\otimes q}},\quad \left\vert F\right\vert_{l}=\left\vert F\right\vert+\left\vert F\right\vert_{1,l}.\label{norm}
\end{eqnarray}

We remark that $\vert F\vert_{0}=\vert F\vert$ and $\vert F\vert_{1,l}=0$ for $l=0$. For $F=(F_1,\cdots,F_d)\in\mathcal{S}^d$, we set 
\begin{eqnarray}
\left\vert F\right\vert_{1,l} &=&\sum_{i=1}^{d}\left\vert F_i\right\vert_{1,l},\quad \left\vert F\right\vert_{l}=\sum_{i=1}^{d}\left\vert F_i\right\vert_l.\nonumber
\end{eqnarray}

Moreover, we associate the following norms. For any $l, p\geq1$,
\begin{eqnarray}
\left\Vert F \right\Vert _{l,p}&=&(\mathbb{E}\left\vert F \right\vert_{l}^{p})^{1/p}, \quad \left\Vert F \right\Vert _{p}=(\mathbb{E}\left\vert F \right\vert^{p})^{1/p} ,\nonumber \\
\left\Vert F\right\Vert _{L,l,p}&=&\left\Vert F\right\Vert _{l,p}+\left\Vert
LF\right\Vert _{l-2,p}. 
\end{eqnarray}

We denote by $\mathcal{D}_{l,p}$ the closure of $\mathcal{S}$ with respect to the norm $\left\Vert \circ
\right\Vert _{L,l,p}:$ 
\begin{equation}
\mathcal{D}_{l,p}=\overline{\mathcal{S}}^{\left\Vert \circ \right\Vert
_{L,l,p}},  \label{3.9}
\end{equation}%
and
\begin{equation*}
    \mathcal{D}_{\infty}=\mathop{\bigcap}\limits_{l=1}^{\infty}\mathop{\bigcap}\limits_{p=1}^{\infty}\mathcal{D}_{l,p},\quad \mathcal{H}_{l}=\mathcal{D}_{l,2}.
\end{equation*}

For an IbP framework $(\mathcal{S},D,L)$, we now extend the operators from $\mathcal{S}$ to $\mathcal{D}_{\infty}$.
For $F\in \mathcal{D}_{\infty}$, $p\geq2$, there exists a sequence $F_{n}\in \mathcal{S}$ such that $\left\Vert F-F_{n}\right\Vert _{p}\rightarrow 0$, $\left\Vert F_{m}-F_{n}\right\Vert _{q,p}\rightarrow 0$ and $\left\Vert LF_{m}-LF_{n}\right\Vert _{q-2,p}\rightarrow 0$. Since $%
D^{q} $ and $L$ are closable, we can define
\begin{equation}
D^{q}F=\lim_{n\rightarrow\infty}D^{q}F_{n}\quad in\quad L^p(\Omega;\mathcal{H}^{\otimes q}),\quad LF=\lim_{n\rightarrow\infty}LF_{n}\quad in\quad L^p(\Omega).  \label{3.10}
\end{equation}%
We still associate the same norms introduced above for $F\in\mathcal{D}_\infty$.

\begin{lemma}
The triplet $(\mathcal{D}_\infty,D,L)$ is an IbP framework.
\end{lemma}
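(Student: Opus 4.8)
The plan is to verify that the triplet $(\mathcal{D}_\infty, D, L)$ satisfies all the requirements in the definition of an IbP framework, namely: (i) $\mathcal{D}_\infty$ is stable under composition with $C_p^\infty$ functions; (ii) $D$ maps $\mathcal{D}_\infty$ into $\bigcap_p L^p(\Omega;\mathcal{H})$, satisfies property (a) (i.e. $D_hF \in \mathcal{D}_\infty$) and the Chain Rule; (iii) $L$ maps $\mathcal{D}_\infty$ into $\mathcal{D}_\infty$ and satisfies the Duality formula; and finally (iv) the operators $D^q$ are closable on $\mathcal{D}_\infty$ as well, so that the triplet is genuinely an IbP framework in the sense of the definition. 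The guiding principle throughout is that every one of these properties is already known on the dense subset $\mathcal{S}$, and each is an identity or an inequality that is continuous with respect to the norms $\Vert\circ\Vert_{L,l,p}$; hence each passes to the limit.

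First I would fix $F \in \mathcal{D}_\infty$ and a defining sequence $F_n \in \mathcal{S}$ as in \eqref{3.10}, which converges to $F$ simultaneously in all the norms $\Vert\circ\Vert_{L,l,p}$. For the Chain Rule, given $\phi \in C_p^\infty(\mathbb{R}^d)$ and $F = (F_1,\dots,F_d) \in \mathcal{D}_\infty^d$, I would apply \eqref{0.00} to the approximating tuples $F_n = (F_{1,n},\dots,F_{d,n})$, obtaining $D\phi(F_n) = \sum_i \partial_i\phi(F_n) DF_{i,n}$; then I would let $n\to\infty$, using that $\phi(F_n) \to \phi(F)$ and $\partial_i\phi(F_n) \to \partial_i\phi(F)$ in every $L^p$ (polynomial growth of $\phi$ and its derivatives together with uniform $L^p$-bounds on $F_n$), that $DF_{i,n} \to DF_i$ in every $L^p(\Omega;\mathcal{H})$, and the closability of $D$ to identify the limit of $D\phi(F_n)$ as $D\phi(F)$. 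This simultaneously shows $\phi(F) \in \mathcal{D}_\infty$ (one also needs to control $L\phi(F)$, which is handled by the analogous limiting argument applied to the known formula for $L$ acting on a composition; this is where the definition of $\Vert\circ\Vert_{L,l,p}$ with the $\Vert LF\Vert_{l-2,p}$ term does its work). Property (a), that $D_hF = \langle DF,h\rangle_\mathcal{H} \in \mathcal{D}_\infty$, follows the same way from $D_hF_n \in \mathcal{S}$ and the continuity of $h\mapsto\langle\cdot,h\rangle$. For the Duality formula \eqref{0.01}, I would write it for $F_n, G_n \in \mathcal{S}$ and pass to the limit using that $DF_n \to DF$, $DG_n \to DG$ in $L^2(\Omega;\mathcal{H})$, and $LF_n \to LF$, $LG_n \to LG$ in $L^2(\Omega)$, so that all three expectations converge to their counterparts for $F,G$; this also confirms $L$ maps $\mathcal{D}_\infty$ into $\mathcal{D}_\infty$ once one checks that $LF$ inherits finiteness of all the Sobolev norms, which again is part of the construction.

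The last point is the closability of $D^q$ on $\mathcal{D}_\infty \subset L^2(\Omega)$. Here I would argue that $\mathcal{S} \subset \mathcal{D}_\infty$ is dense for $\Vert\circ\Vert_{L,l,p}$ and that the graph of $D^q$ on $\mathcal{D}_\infty$ is contained in the closure of the graph of $D^q$ on $\mathcal{S}$; since the latter is closable by hypothesis, so is the former. Concretely: if $G_n \in \mathcal{D}_\infty$, $G_n \to 0$ in $L^2(\Omega)$, and $D^qG_n \to \eta$ in $L^2(\Omega;\mathcal{H}^{\otimes q})$, approximate each $G_n$ by elements of $\mathcal{S}$ well enough (a diagonal argument) to produce a sequence in $\mathcal{S}$ tending to $0$ in $L^2(\Omega)$ whose $q$-th derivatives tend to $\eta$, forcing $\eta = 0$ by closability of $D^q$ on $\mathcal{S}$. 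I expect the main obstacle — or rather the only place demanding genuine care rather than routine bookkeeping — to be precisely this diagonal/density argument and, relatedly, making sure that the limiting objects $D^qF$ and $LF$ constructed in \eqref{3.10} are independent of the chosen approximating sequence and have all Sobolev norms finite, i.e. that $\mathcal{D}_\infty$ really is mapped into itself by $D$ and $L$; everything else is a matter of passing continuous identities through a limit.
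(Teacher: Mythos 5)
Your plan is correct and follows the same route as the paper: approximate by elements of $\mathcal{S}$, use the closability of $D^{q}$ and $L$ to identify limits, and pass the defining identities (chain rule, duality, property $(a)$) through the limit. The paper's own proof is in fact narrower — it only verifies property $(a)$ explicitly, i.e. that $\langle DF,h\rangle_{\mathcal{H}}\in\mathcal{D}_{\infty}$, via the Cauchy estimate $\mathbb{E}\vert D^{k}\langle D(F_{m}-F_{n}),h\rangle_{\mathcal{H}}\vert_{\mathcal{H}^{\otimes k}}^{p}\leq \vert h\vert_{\mathcal{H}}^{p}\,\mathbb{E}\vert D^{k+1}(F_{m}-F_{n})\vert_{\mathcal{H}^{\otimes (k+1)}}^{p}$ for an approximating sequence $F_{n}\in\mathcal{S}$, declaring the remaining verifications standard — and all of those are covered by your outline.
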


\begin{proof}
Here we just show that $D$ verifies (\ref{0.a}): For $F\in\mathcal{D}_\infty$ and $h\in\mathcal{H}$, we have $\langle DF,h\rangle_\mathcal{H}\in\mathcal{D}_\infty$.

In fact, for any $k\geq1,p\geq2$, any $F\in\mathcal{D}_{k+1,p}$, there is a sequence $F_n\in\mathcal{S}$ such that $\Vert F_n-F\Vert_{k+1,p}\rightarrow0$. Then for any $u_1,\cdots,u_k\in L^p(\Omega;\mathcal{H}),h\in\mathcal{H}$, any $n,m\in\mathbb{N},$
\begin{align*}
    &\mathbb{E}\langle D^k(\langle DF_m,h\rangle_\mathcal{H}-\langle DF_n,h\rangle_\mathcal{H}),u_1\otimes\cdots\otimes u_k\rangle_{\mathcal{H}^{\otimes k}}^{\frac{p}{2}}=\mathbb{E}\vert D_{u_k}D_{u_{k-1}}\cdots D_{u_1}\langle D(F_m-F_n),h\rangle_\mathcal{H}\vert^{\frac{p}{2}}\\
    &=\mathbb{E}\vert D_{u_k}D_{u_{k-1}}\cdots D_{u_1}D_h(F_m-F_n)\vert^{\frac{p}{2}}=\mathbb{E}\vert\langle D^{k+1}(F_m-F_n),h\otimes u_1\otimes\cdots\otimes u_k\rangle_{\mathcal{H}^{\otimes (k+1)}}\vert^{\frac{p}{2}}\\
    &\leq\mathbb{E}\vert D^{k+1}(F_m-F_n)\vert_{\mathcal{H}^{\otimes (k+1)}}^{\frac{p}{2}}\vert h\otimes u_1\otimes\cdots\otimes u_k\vert_{\mathcal{H}^{\otimes (k+1)}}^{\frac{p}{2}}\rightarrow0,
\end{align*}
which yields that $\mathbb{E}\vert D^k(\langle DF_m,h\rangle_\mathcal{H}-\langle DF_n,h\rangle_\mathcal{H})\vert_{\mathcal{H}^{\otimes k}}^p\rightarrow0.$ Therefore, $\langle DF,h\rangle_{\mathcal{H}}\in\mathcal{D}_{k,p}$ and (\ref{0.a}) is verified.
\end{proof}

The following lemma is useful in order to control the Sobolev norms $\left\Vert F\right\Vert
_{L,l,q}$.
\begin{lemma}
We fix $p\geq 2,l\geq2.$ Let $F\in  L^1(\Omega)$ and let $F_{n}\in \mathcal{S},n\in\mathbb{N}$ such that 
\begin{eqnarray*}
i)\quad \mathbb{E}\left\vert F_{n}-F\right\vert &\rightarrow &0, \\
ii)\quad \sup_{n}\left\Vert F_{n}\right\Vert _{L,l,p} &\leq &K_{l,p}<\infty. 
\end{eqnarray*}%
Then for every $1\leq \bar{p}<p,$ we have $F\in \mathcal{D}_{l,\bar{p}}$ and $\left\Vert F\right\Vert
_{L,l,\bar{p}}\leq K_{l,\bar{p}}$ .
\end{lemma}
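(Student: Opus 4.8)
The plan is to exploit the uniform bound $ii)$ to extract weak limits in the relevant Hilbert-space-valued $L^p$ spaces, and then to identify those weak limits with the derivatives of $F$ by using the closability already built into the definition of $\mathcal{D}_\infty$. First I would fix $1\le\bar p<p$ and recall that, since $\sup_n\Vert F_n\Vert_{l,p}\le K_{l,p}$, each sequence $(D^qF_n)_n$, $1\le q\le l$, is bounded in $L^p(\Omega;\mathcal{H}^{\otimes q})$, and $(LF_n)_n$ is bounded in $L^p(\Omega)$ (indeed in $L^p(\Omega;\mathcal H^{\otimes(l-2)})$ after taking derivatives). Because $p>1$, these spaces are reflexive, so along a common subsequence (which I will not relabel) we get $D^qF_n\rightharpoonup G_q$ weakly in $L^p(\Omega;\mathcal{H}^{\otimes q})$ for each $q\le l$, and $LF_n\rightharpoonup H$ weakly in $L^p(\Omega)$. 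Combined with $i)$, namely $F_n\to F$ in $L^1$, convexity of norms and weak lower semicontinuity will give the bound $\Vert G_q\Vert_{L^{\bar p}}\le\liminf_n\Vert D^qF_n\Vert_{L^{\bar p}}$, etc., hence the final estimate $\Vert F\Vert_{L,l,\bar p}\le K_{l,\bar p}$ once the $G_q$ and $H$ are identified as $D^qF$ and $LF$.

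The identification step is where the closability hypotheses enter. Since $\sup_n\Vert F_n\Vert_{L,l,p}<\infty$ and $p>\bar p$, the sequence $(F_n)$ is bounded in $L^p\supset L^{\bar p}$; together with $F_n\to F$ in $L^1$ this forces $F_n\to F$ in $L^{\bar p}$ as well (uniform integrability from the $L^p$ bound, $p>\bar p$). Now I would pass to convex combinations: by Mazur's lemma there is a sequence $\widetilde F_n$ of convex combinations of the $F_m$, $m\ge n$, such that simultaneously $D^q\widetilde F_n\to G_q$ \emph{strongly} in $L^{\bar p}(\Omega;\mathcal{H}^{\otimes q})$ for all $q\le l$ and $L\widetilde F_n\to H$ strongly in $L^{\bar p}(\Omega)$, and still $\widetilde F_n\to F$ strongly in $L^{\bar p}(\Omega)$ (convex combinations of a convergent sequence converge to the same limit). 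The $\widetilde F_n$ still lie in $\mathcal{S}$, since $\mathcal{S}$ is a vector space. Then closability of $D^q$ (from the IbP-framework definition) gives $F\in\mathrm{Dom}(D^q)$ with $D^qF=G_q$, and closability of $L$ gives $LF=H$; moreover, applying closability of $D^{q}$ to the sequence $(L\widetilde F_n)$ against its limit $H$ identifies the derivatives $D^{q}LF$ with the weak limits of $D^qL F_n$ for $q\le l-2$, so the full norm $\Vert LF\Vert_{l-2,\bar p}$ is controlled. Putting these together, $F\in\mathcal{D}_{l,\bar p}$ with $\Vert F\Vert_{L,l,\bar p}=\Vert F\Vert_{l,\bar p}+\Vert LF\Vert_{l-2,\bar p}\le K_{l,\bar p}$.

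The main obstacle I anticipate is the bookkeeping needed to run Mazur's lemma \emph{simultaneously} for all the operators $D^1,\dots,D^l$ and $L$ (and the derivatives of $L$ up to order $l-2$) along one common sequence of convex combinations: one must iterate the lemma, or apply it in the product Banach space $L^{\bar p}(\Omega)\times\prod_{q=1}^l L^{\bar p}(\Omega;\mathcal H^{\otimes q})\times\prod_{q=0}^{l-2}L^{\bar p}(\Omega;\mathcal H^{\otimes q})$ to the single vector-valued sequence $(F_n,\,DF_n,\dots,D^lF_n,\,LF_n,\dots,D^{l-2}LF_n)$, which is bounded there, so that one extraction of convex combinations handles everything at once. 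The other point requiring care is that closability is a statement about sequences in $\mathcal S$ converging to $0$ with convergent images; here one applies it to differences $\widetilde F_n-\widetilde F_m$ (a Cauchy-type argument) or directly to $\widetilde F_n - F$ after first noting $F\in L^{\bar p}$, using that the $\widetilde F_n$ are genuinely in $\mathcal S$. Everything else — reflexivity, weak lower semicontinuity of the norm, uniform integrability upgrading $L^1$ to $L^{\bar p}$ convergence — is standard functional analysis and I would state it without detailed computation.
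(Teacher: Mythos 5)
Your overall strategy is correct and essentially parallels the paper's: exploit the uniform $\|\cdot\|_{L,l,p}$ bound to get weak compactness, use Mazur's lemma to pass to convex combinations that converge strongly, identify the limit as $F$ via the $L^1$ convergence and closability, and then bound $\|F\|_{L,l,\bar p}$. The one genuine difference is in the packaging of the compactness step: the paper exploits $p\geq 2$ to view the sequence as bounded in the \emph{single} Hilbert space $\mathcal H_l=\mathcal D_{l,2}$, equipped with a scalar product encoding all the derivatives and $L$ at once, and applies Banach--Alaoglu and Mazur there; the final $L^{\bar p}$ bound is then obtained by extracting an a.s.\ convergent subsequence and applying uniform integrability (using $p/\bar p>1$). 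You instead work coordinatewise in the reflexive spaces $L^p(\Omega;\mathcal H^{\otimes q})$, proposing to run Mazur in the product Banach space. Both are valid; the paper's version sidesteps the bookkeeping you flag, and its a.s.-convergence route handles $\bar p=1$ cleanly.

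One imprecision worth fixing: in your first paragraph you invoke weak lower semicontinuity to obtain $\|G_q\|_{L^{\bar p}}\le\liminf_n\|D^qF_n\|_{L^{\bar p}}$ for each $q$ separately, and then jump to $\|F\|_{L,l,\bar p}\le K_{l,\bar p}$. That jump is not automatic, because $\|F\|_{l,\bar p}=\big(\mathbb E\big(|F|+\sum_q|D^qF|_{\mathcal H^{\otimes q}}\big)^{\bar p}\big)^{1/\bar p}$ is \emph{not} the sum of the $L^{\bar p}$-norms of the individual derivatives; bounding each coordinate only gives a bound by $\sum_q\sup_n\|D^qF_n\|_{L^{\bar p}}$, which controls an equivalent but not the same quantity. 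The cleaner route --- which your second paragraph actually sets up --- is to use convexity to get $\|\widetilde F_n\|_{L,l,\bar p}\le\sup_m\|F_m\|_{L,l,\bar p}=K_{l,\bar p}$ for each Mazur combination, and then pass to the limit using the \emph{strong} $L^{\bar p}$ convergence you have already arranged. If you use that argument, the weak lower semicontinuity claim becomes unnecessary and the constant comes out exactly right. The paper does essentially this, via uniform integrability of $|G_n|_l^{\bar p}+|LG_n|_{l-2}^{\bar p}$.
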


\begin{proof}
The Hilbert space $\mathcal{H}_{l}=\mathcal{D}_{l,2}$ equipped with the scalar product
\begin{eqnarray*}
\left\langle U,V\right\rangle _{L,l,2}&:=&\sum_{q=1}^{l}\mathbb{E} \langle D^qU, D^qV\rangle_{\mathcal{H}^{\otimes q}}+\mathbb{E}\vert UV\vert\\
&+&\sum_{q=1}^{l-2}\mathbb{E} \langle D^qLU, D^qLV\rangle_{\mathcal{H}^{\otimes q}}+\mathbb{E}\vert LU\times LV\vert
\end{eqnarray*}
is the space of the functionals
which are $l-$times differentiable in $L^{2}$ sense.
By $ii)$, for $p\geq2$, $\left\Vert
F_{n}\right\Vert _{L,l,2}\leq \left\Vert F_{n}\right\Vert _{L,l,p}\leq K_{l,p}$.
Then, applying Banach Alaoglu's theorem, there exists a functional $G\in\mathcal{H}_l$ and a subsequence (we still denote it by $n$), such that $F_n\rightarrow G$ weakly in the Hilbert space $\mathcal{H}_l$. This means that for every $Q\in\mathcal{H}_l$, $\langle F_n,Q\rangle_{L,l,2}\rightarrow\langle G,Q\rangle_{L,l,2}$. Therefore, by Mazur's theorem, we can construct some convex combination
\[
G_{n}=\sum_{i=n}^{m_{n}}\lambda _{i}^{n}\times F_{i}\in \mathcal{S}
\]%
with $\lambda _{i}^{n}\geq 0,i=n,....,m_{n}$ and $%
\sum\limits_{i=n}^{m_{n}}\lambda _{i}^{n}=1$,
such that 
\[
\left\Vert G_{n}-G\right\Vert _{L,l,2}\rightarrow 0.
\]
In particular we have 
\[
\mathbb{E}\left\vert G_{n}-G\right\vert \leq \left\Vert G_{n}-G\right\Vert
_{L,l,2}\rightarrow 0.
\]%
Also, we notice that by i),
\[
\mathbb{E}\left\vert G_{n}-F\right\vert \leq \sum_{i=n}^{m_{n}}\lambda
_{i}^{n}\times \mathbb{E}\left\vert F_{i}-F\right\vert \rightarrow 0.
\]%
So we conclude that 
$F=G\in \mathcal{H}_{l}.$
Thus, we have 
\[
\mathbb{E}(\left\vert G_{n}-F\right\vert
_{l}^{2})+\mathbb{E}(\left\vert LG_{n}-LF\right\vert
_{l-2}^{2})\leq \left\Vert G_{n}-F\right\Vert _{L,l,2}^{2}\rightarrow 0.
\]%
By passing to a subsequence, we have $\left\vert G_{n}-F\right\vert _{l}+\left\vert LG_{n}-LF\right\vert _{l-2}\rightarrow 0
$ almost surely. 
Now, for every $\bar{p}\in[1,p)$, we denote $Y_{n}:=\left\vert G_{n}\right\vert
_{l}^{\bar{p}}+\left\vert LG_{n}\right\vert
_{l-2}^{\bar{p}}$ and $Y:=\left\vert F\right\vert _{l}^{\bar{p}}+\left\vert LF\right\vert _{l-2}^{\bar{p}}$. Then, $Y_n\rightarrow Y$ almost surely, and for any $\Tilde{q}\in[\bar{p},p]$,
\begin{eqnarray}
\mathbb{E}\vert G_n\vert_{l}^{\Tilde{q}}+\mathbb{E}\vert LG_n\vert_{l-2}^{\Tilde{q}}&\leq&\left\Vert G_{n}\right\Vert_{L,l,\Tilde{q}}^{\Tilde{q}}=
\left\Vert \sum_{i=n}^{m_{n}}\lambda _{i}^{n}\times F_{i}\right\Vert_{L,l,\Tilde{q}}^{\Tilde{q}}
\leq (\sum_{i=n}^{m_{n}}\lambda _{i}^{n}\times \left\Vert
F_{i}\right\Vert_{L,l,\Tilde{q}})^{\Tilde{q}}  \nonumber    \\
&\leq &(\sup_{i}\left\Vert F_{i}\right\Vert_{L,l,\Tilde{q}}\times\sum_{i=n}^{m_{n}}\lambda _{i}^{n})^{\Tilde{q}} =\sup_{i}\left\Vert F_{i}\right\Vert_{L,l,\Tilde{q}}^{\Tilde{q}}\leq K_{l,\Tilde{q}}^{\Tilde{q}}. \nonumber
\end{eqnarray}
So $(Y_n)_{n\in\mathbb{N}}$ is uniformly integrable, and we have
\[
\left\Vert F\right\Vert _{L,l,\bar{p}}^{\bar{p}}=\mathbb{E}(\left\vert F\right\vert
_{l}^{\bar{p}})+\mathbb{E}(\left\vert LF\right\vert
_{l-2}^{\bar{p}})=\mathbb{E}(Y)=\lim_{n\rightarrow\infty}\mathbb{E}(Y_{n})\leq K_{l,\bar{p}}^{\bar{p}},
\]%
\end{proof}

\subsection{Main consequences: Convergence in total variation distance}
We will use the abstract framework in $\cite{ref7}$ for the IbP framework $(\mathcal{D}_\infty,D,L)$, with $D$ and $L$ defined in (\ref{3.10}). 
Using Malliavin type arguments, $\cite{ref7}$ proves the following results. 
The first result, concerning the density, is classical:
\begin{lemma}
Let $F\in \mathcal{D}_{\infty}$. If $\Sigma_{6p+4}(F)<\infty$,
then the law of random variable $F$ has a density $p_F(x)$ which is $p-$times differentiable.
\end{lemma}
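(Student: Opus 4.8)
The plan is to run, inside the abstract $\mathrm{IbP}$ framework $(\mathcal D_\infty,D,L)$, the classical Malliavin argument for existence and smoothness of the density; this is precisely what is done in $\cite{ref7}$, and I only sketch the skeleton. The first ingredient is the order--one integration by parts formula. Combining the chain rule $(\ref{0.00})$, the resulting identity $\langle D\phi(F),DF\rangle_{\mathcal H}=\phi'(F)\sigma_F$, and the duality $(\ref{0.01})$, one obtains, for $F\in\mathcal D_\infty$ with $\sigma_F^{-1}$ sufficiently integrable and for every $G\in\mathcal D_\infty$, $\phi\in C_p^\infty(\mathbb R)$,
\[
\mathbb E\big[\phi'(F)\,G\big]=\mathbb E\big[\phi(F)\,H(F;G)\big],\qquad
H(F;G)=\frac{1}{\sigma_F}\big(G\,LF-\langle DG,DF\rangle_{\mathcal H}\big)+\frac{G}{\sigma_F^{2}}\,\langle D\sigma_F,DF\rangle_{\mathcal H}.
\]
Setting $H_0(F;G)=G$ and $H_q(F;G)=H\big(F;H_{q-1}(F;G)\big)$, iteration gives $\mathbb E[\phi^{(q)}(F)G]=\mathbb E[\phi(F)H_q(F;G)]$ for all $q$ for which the intermediate weights are integrable and lie in the Sobolev spaces needed to re-apply the formula.

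From here I would read off the density and its derivatives. For $f\in C_c^\infty(\mathbb R)$ set $\Phi(x)=\int_{-\infty}^x f(u)\,du$; assuming $H_1(F;1)\in L^1(\Omega)$, applying the formula with $G=1$ and then Fubini gives
\[
\mathbb E[f(F)]=\mathbb E[\Phi(F)H_1(F;1)]=\int_{\mathbb R}f(y)\,\mathbb E\big[\mathbbm{1}_{\{F>y\}}H_1(F;1)\big]\,dy,
\]
so $F$ has the density $p_F(y)=\mathbb E[\mathbbm{1}_{\{F>y\}}H_1(F;1)]$, and in particular $\mathbb P(F=y)=0$ for every $y$. Applying the order--one formula to $G=H_k(F;1)$ and repeating the Fubini step, induction yields
\[
p_F^{(k)}(y)=(-1)^k\,\mathbb E\big[\mathbbm{1}_{\{F>y\}}\,H_{k+1}(F;1)\big],\qquad 0\le k\le p,
\]
each right--hand side being continuous in $y$ by dominated convergence (the indicator is dominated by $|H_{k+1}(F;1)|\in L^1(\Omega)$ and $F$ has no atoms). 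Thus the Lemma reduces to showing that $H_1(F;1),\dots,H_{p+1}(F;1)$ lie in $L^1(\Omega)$ and that $H_1(F;1),\dots,H_{p}(F;1)$ lie in a Sobolev space $\mathcal D_{1,r}$ in which the formula can be iterated.

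The heart of the matter — and the one genuinely delicate point — is this integrability count, which is exactly what fixes the exponent $6p+4$. By induction on $q$, $H_q(F;1)$ is a finite sum of terms, each a product of a negative power $\sigma_F^{-m_q}$, of finitely many factors $D^jF$ ($1\le j\le q+1$) and $D^jLF$ ($0\le j\le q-1$), hence of a quantity controlled by a polynomial in $|F|_{q+1}$ and $|LF|_{q-1}$, which belongs to every $L^r(\Omega)$ because $F\in\mathcal D_\infty$; a careful bookkeeping of the powers of $\sigma_F^{-1}$ produced by the successive differentiations (the computation carried out in $\cite{ref7}$) gives $m_q\le 3q-1$. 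By the Cauchy--Schwarz inequality it then suffices that $\Sigma_{2m_q}(F)<\infty$, i.e., since $m_q\le 3q-1$, that $\Sigma_{6q-2}(F)<\infty$; for $q=p+1$ this is exactly the hypothesis $\Sigma_{6p+4}(F)<\infty$, and the same hypothesis also puts the lower--order weights and their first derivatives in the required spaces. Everything else — Fubini and dominated convergence — is routine, so the whole proof hinges on that power count, for which we refer to $\cite{ref7}$.
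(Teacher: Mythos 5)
The paper itself offers no proof of this lemma: it is quoted verbatim from \cite{ref7} (``Using Malliavin type arguments, \cite{ref7} proves the following results\dots''), so there is no in-paper argument to compare yours against. Your sketch is the standard route and is the same one the cited reference follows: the order-one weight $H(F;G)$ you write down is correct (it is what one gets from $\phi'(F)=\sigma_F^{-1}\langle D\phi(F),DF\rangle_{\mathcal H}$, the product rule and the duality (\ref{0.01})), the Fubini step giving $p_F(y)=\mathbb E[\mathbbm{1}_{\{F>y\}}H_1(F;1)]$ and the formula $p_F^{(k)}(y)=(-1)^k\mathbb E[\mathbbm{1}_{\{F>y\}}H_{k+1}(F;1)]$ are the classical representation, and the arithmetic $2m_q$ with $q=p+1$ does reproduce the exponent $6p+4$ under your claimed bound $m_q\le 3q-1$. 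The caveat is that the only non-routine content of the lemma -- namely (i) that $\sigma_F^{-1}$ and the successive weights $H_q(F;1)$ actually lie in Sobolev spaces $\mathcal D_{1,r}$ permitting the next application of the formula when only finitely many negative moments $\Sigma_{6p+4}(F)$ are assumed (rather than all of them), and (ii) the precise power count that pins down $6p+4$ -- is exactly what you defer to \cite{ref7}; so your text is an accurate outline rather than a self-contained proof, which is consistent with how the paper itself treats the statement.
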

In the following, we define the distances between random variables $F,G:\Omega
\rightarrow \mathbb{R}$:%
\[
d_{k}(F,G)=\sup \{\left\vert \mathbb{E}(f(F))-\mathbb{E}(f(G))\right\vert :\sum_{0\leq i\leq k}\left\Vert  f^{(i)}\right\Vert _{\infty }\leq 1\} 
\]%
For $k=1$, this is the Fortet Mourier
distance (which is a variant of the Wasserstein distance), while for $k=0$, this is
the total variation distance and we denote it by $d_{TV}$.
Now we present the second result concerning the total variation distance:

\begin{lemma}
We fix some index $l$, some $r\in \mathbb{N}$ and some $\delta >0.$ We define $p_1=2( r(\frac{1}{\delta}-1)+2)$, $p_2=\max\{6l+4,2( \frac{r+l}{\delta}-r+2)\}$, $q_1\geq r(\frac{1}{\delta}-1)+4$, $q_2\geq\frac{r+l}{\delta}-r+4$. Let $F,G\in \mathcal{D}_{\infty}.$  Then one may find $C\in \mathbb{R}_{+}$
, $p\in \mathbb{N}$ (depending on $r$ and $\delta )$ such that 
\begin{equation}
i)\quad d_{TV}(F,G)\leq C(1+\Sigma _{p_1}(F)+\Sigma_{p_1}(G)+\left\Vert F\right\Vert
_{L,q_1,p}+\left\Vert G\right\Vert _{L,q_1,p})\times
d_{r}(F,G)^{1-\delta },
\label{3.12}
\end{equation}%
and
\begin{equation}
ii)\quad\Vert p_F- p_G\Vert_{l,\infty}\leq C(1+\Sigma _{p_2}(F)+\Sigma_{p_2}(G)+\left\Vert F\right\Vert
_{L,q_2,p}+\left\Vert G\right\Vert _{L,q_2,p})\times
d_{r}(F,G)^{1-\delta },  \label{3.12*}
\end{equation}%
where $p_F(x)$ and $p_G(x)$ denote the density functions of $F$ and $G$ respectively.
\end{lemma}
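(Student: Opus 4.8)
The statement is an interpolation result: it converts the weak control provided by $d_{r}(F,G)$ into control in total variation distance (resp. in the density norms $\Vert\cdot\Vert_{l,\infty}$), paying a loss $\delta$ in the exponent and picking up the non-degeneracy factors $\Sigma_{p_i}$ together with the Malliavin--Sobolev norms $\Vert\cdot\Vert_{L,q_i,p}$. The engine is the integration by parts (IbP) machinery in the framework $(\mathcal D_\infty,D,L)$: on the one hand it shows that the laws of $F$ and $G$ are smooth with quantitative bounds on the $L^{1}$ and $L^{\infty}$ norms of the derivatives of $p_F,p_G$ in terms of the $\Sigma$'s and the norms $\Vert\cdot\Vert_{L,\cdot,p}$; on the other hand it lets one regularise a merely bounded test function by convolution and estimate the regularisation error by those same quantities. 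I would organise the proof in three steps, plus the bookkeeping that produces the precise indices.

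\textbf{Step 1 (integration by parts toolkit).} For $F\in\mathcal D_\infty$ with $\Sigma_p(F)<\infty$ for all $p$, the Malliavin calculus of $\cite{ref7}$ produces for each $m\ge1$ a weight $H_m(F)\in\bigcap_p L^p(\Omega)$ with $\mathbb E\big(f^{(m)}(F)\big)=\mathbb E\big(f(F)H_m(F)\big)$ for all $f\in C_b^{m}(\mathbb R)$, together with an estimate
\[
\Vert H_m(F)\Vert_{p}\le C\big(1+\Sigma_{a_m}(F)+\Vert F\Vert_{L,b_m,p'}\big),
\]
where $a_m$ grows linearly in $m$ (each IbP adds powers of $1/\det\sigma_F$), $b_m=m+\mathrm{const}$, and $p'$ is a fixed multiple of $p$ coming from the Hölder inequalities. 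By the density lemma above (once $a_m$ stays below the available non-degeneracy order) $p_F\in C^{m}$, and integrating the IbP formula, or using the pointwise representation $p_F^{(j)}(x)=\pm\,\mathbb E\big(\mathbbm{1}_{(x,\infty)}(F)H_{j+1}(F)\big)$ together with $\Vert g\Vert_\infty\le\Vert g'\Vert_{L^1}$ in dimension one, one gets
\[
\Vert p_F^{(j)}\Vert_{L^1}+\Vert p_F^{(j)}\Vert_{\infty}\le C\big(1+\Sigma_{a_{j+2}}(F)+\Vert F\Vert_{L,b_{j+2},p}\big),
\]
and the same for $G$. Carefully tracking which $\Sigma$-order and which Sobolev order is consumed at each IbP is precisely what yields the stated $p_1,p_2,q_1,q_2$ and the $+2$, $+4$ corrections; this is the real work and the main obstacle of the proof.

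\textbf{Step 2 (mollification and proof of (i)).} Fix a Schwartz function $\phi$ with $\int\phi=1$ and vanishing moments $\int x^{j}\phi(x)\,dx=0$ for $j=1,\dots,N-1$, where $N:=r\big(\tfrac1\delta-1\big)$; set $\phi_\eta(x)=\eta^{-1}\phi(x/\eta)$ and $f_\eta=f*\phi_\eta$. For $f$ with $\Vert f\Vert_\infty\le1$ and $\eta\in(0,1]$ one has $\Vert f_\eta\Vert_{r,\infty}\le\Vert f\Vert_\infty\sum_{j\le r}\Vert\phi_\eta^{(j)}\Vert_{L^1}\le C\eta^{-r}$, hence $|\mathbb E f_\eta(F)-\mathbb E f_\eta(G)|\le C\eta^{-r}d_r(F,G)$. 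For the regularisation error, write $\mathbb E f(F)-\mathbb E f_\eta(F)=\int\phi(w)\big(\mathbb E f(F)-\mathbb E f(F-\eta w)\big)\,dw$; Taylor-expanding $t\mapsto\mathbb E f(F-t\eta w)=\int f(x)p_F(x+t\eta w)\,dx$ in $t$ to order $N$ at $t=0$, integrating by parts each term $\mathbb E f^{(j)}(F)$, and using the vanishing moments of $\phi$ to kill the orders $j=1,\dots,N-1$ while the order $j=0$ cancels $\mathbb E f(F)$, there remains a term bounded by $C\eta^{N}\Vert f\Vert_\infty\Vert p_F^{(N)}\Vert_{L^1}\le C\eta^{N}\Theta(F)$ with $\Theta(F):=1+\Sigma_{p_1}(F)+\Vert F\Vert_{L,q_1,p}$ (similarly for $G$). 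Combining,
\[
|\mathbb E f(F)-\mathbb E f(G)|\le C\big(\Theta(F)+\Theta(G)\big)\eta^{N}+C\eta^{-r}d_r(F,G),
\]
and optimising over $\eta\in(0,1]$ at $\eta^{*}\asymp\big(d_r/(\Theta(F)+\Theta(G))\big)^{1/(N+r)}$ — taking $\eta=1$ together with $d_{TV}\le2$ in the residual regime $\eta^{*}>1$ — gives, since $\tfrac{N}{N+r}=1-\delta$ and $\tfrac{r}{N+r}=\delta$, the bound $|\mathbb E f(F)-\mathbb E f(G)|\le C(\Theta(F)+\Theta(G))^{\delta}\,d_r(F,G)^{1-\delta}$. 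Taking the supremum over $\Vert f\Vert_\infty\le1$ and using $\Theta^{\delta}\le\Theta$ yields \eqref{3.12}.

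\textbf{Step 3 (proof of (ii)).} Repeat the argument on the functions $x\mapsto\partial_x^{i}p_F(x)$, $i\le l$: decompose $\partial_x^{i}p_F=\partial_x^{i}(p_F*\phi_\eta)+\big(\partial_x^{i}p_F-\partial_x^{i}(p_F*\phi_\eta)\big)$ and likewise for $G$. The difference of the first terms equals $\int\phi_\eta^{(i)}(x-y)(p_F-p_G)(y)\,dy=\mathbb E g_x(F)-\mathbb E g_x(G)$ with $g_x=\phi_\eta^{(i)}(x-\cdot)$ and $\Vert g_x\Vert_{r,\infty}\le C\eta^{-(l+r+1)}$, so it is $\le C\eta^{-(l+r+1)}d_r(F,G)$; alternatively one may start from $\partial_x^i p_F(x)=\pm\mathbb E(\mathbbm{1}_{(x,\infty)}(F)H_{i+1}(F))$ and apply Step 2 directly to the bounded function $\mathbbm{1}_{(x,\infty)}$ while carrying the weight $H_{i+1}$. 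The difference of the second terms is controlled, via Taylor expansion of $\partial_x^{i}p_F$ around $x$ and the vanishing moments, by $C\eta^{N}\big(\Vert p_F^{(i+N)}\Vert_\infty+\Vert p_G^{(i+N)}\Vert_\infty\big)\le C\eta^{N}(\Theta_2(F)+\Theta_2(G))$ with $\Theta_2(F):=1+\Sigma_{p_2}(F)+\Vert F\Vert_{L,q_2,p}$. Balancing the two contributions in $\eta$ forces $N$ of order $\tfrac{r+l}{\delta}-(r+l)$, so that the density bounds used above consume $\Sigma$-orders of order $\tfrac{r+l}{\delta}-r$ — whence the term $2\big(\tfrac{r+l}{\delta}-r+2\big)$ in $p_2$, the alternative $6l+4$ being the order required by the density lemma to make $p_F,p_G$ $l$-times differentiable — and Sobolev orders $q_2$ of order $\tfrac{r+l}{\delta}-r+\mathrm{const}$. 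Summing over $i\le l$ and taking the supremum in $x$ gives \eqref{3.12*}, again with the trivial bound $\Vert p_F-p_G\Vert_{l,\infty}\le\Vert p_F\Vert_{l,\infty}+\Vert p_G\Vert_{l,\infty}\le C\,\Theta_2\le C\,\Theta_2\,d_r(F,G)^{1-\delta}$ covering the residual regime.

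As indicated, the conceptual part — mollify, exploit the vanishing moments of the kernel to gain a high power $\eta^{N}$, balance against $\eta^{-r}d_r(F,G)$ — is routine once Step 1 is available; the genuine difficulty, for which one relies on $\cite{ref7}$, is the sharp bookkeeping in the iterated integration by parts that turns "some finite exponents" into the explicit $p_1,p_2,q_1,q_2$.
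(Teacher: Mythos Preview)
The paper does not prove this lemma at all: it is stated as a result quoted from \cite{ref7} (``Using Malliavin type arguments, \cite{ref7} proves the following results''), followed only by an explanatory \textbf{Comment} paragraph. So there is no in-paper proof to compare against; the authors rely entirely on the regularization lemmas of Bally--Caramellino--Poly.

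Your sketch is a faithful outline of the argument in that reference: mollify the test function with a kernel having high-order vanishing moments, use iterated integration by parts in the abstract IbP framework to bound both the regularisation remainder (via $\Vert p_F^{(N)}\Vert_{L^1}$, controlled by $\Sigma$ and the $\Vert\cdot\Vert_{L,q,p}$ norms) and the mollified difference (via $\Vert f_\eta\Vert_{r,\infty}\le C\eta^{-r}$), then optimise in $\eta$ to get the exponent $1-\delta$. For part~(ii) the cleaner route is indeed your ``alternatively'' line: write $\partial_x^i p_F(x)=\pm\mathbb E(\mathbbm 1_{(x,\infty)}(F)H_{i+1}(F))$ and apply the same splitting to the bounded function $\mathbbm 1_{(x,\infty)}$ while carrying the weight; this is closer to what \cite{ref7} actually does and avoids the extra $+1$ you pick up from $\Vert g_x\Vert_{r,\infty}\le C\eta^{-(l+r+1)}$. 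You correctly flag that the genuine content is the bookkeeping of how many powers of $(\det\sigma_F)^{-1}$ and which Sobolev order each IbP consumes, which is precisely what fixes the stated $p_1,p_2,q_1,q_2$; your sketch does not carry this out, but neither does the present paper.
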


\textbf{Comment }The significance of this lemma is the following. Suppose that one has already obtained an estimate of a "smooth" distance $d_{r}$ between two random vectors $F$ and $G$ (in our case $r=3$ in (\ref{1.21})). But we would like to control the total variation distance between them. In order
to do this, one employs some integration by parts techniques which are
developed in $\cite{ref7}$ and conclude the following. We need to assume that both $F$ and $G$
are "smooth" in the sense that $\left\Vert F\right\Vert _{L,q,p}+\left\Vert
G\right\Vert _{L,q,p}<\infty $ for sufficiently large $q,p.$ Moreover, we need some non
degeneracy condition: both $F$ and $G$ are non-degenerated, that is $%
\Sigma _{p}(F)+\Sigma _{p}(G)<\infty $, with $p$ large enough. Then (\ref{3.12}%
) asserts that one may control $d_{TV}$ by $d_{r},$ and the control is quasi
optimal: we loose just a power $\delta >0$ which we may  take as small
as we want. And (\ref{3.12*}) says that we may also control the distance between the derivatives of density functions by $d_r$.

Then we can get the following corollary.
\begin{corollary}
We fix some index $l$, some $r\in \mathbb{N}$ and some $\delta >0.$ We define $p_1, p_2, q_1, q_2$ as in \textbf{Lemma 3.4}.
Let $F_{M}\in \mathcal{D}_{\infty },M\in \mathbb{N}$ such that for every $p\geq1$,
\[
\sup_M(\left\Vert F_{M}\right\Vert _{L,q_1,p}+\Sigma _{p_1}(F_{M}))\leq
Q_{q_1,p,p_1}<\infty ,
\]with $Q_{q_1,p,p_1}$ a constant not dependent on $M$.
Consider moreover some random variable $F$ such that $d_{r}(F,F_{M})\rightarrow
0.$ Then there exists a constant $C>0$ such that
\[
i)\quad d_{TV}(F,F_{M})\leq Cd_{r}(F,F_{M})^{1-\delta }.
\]%
Moreover, if $\sup\limits_M(\left\Vert F_{M}\right\Vert _{L,q_2,p}+\Sigma _{p_2}(F_{M}))\leq
Q_{q_2,p,p_2}<\infty$, the law of $F$ is absolutely continuous with smooth density $p_{F}$
and one has
\[
ii)\quad \left\Vert p_{F}-p_{F_{M}}\right\Vert _{l,\infty }\leq
Cd_{r}(F,F_{M})^{1-\delta }.
\]
\end{corollary}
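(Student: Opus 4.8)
The plan is to prove the two estimates first for the family $\{F_M\}$ alone, \emph{pairwise}, using Lemma 3.4, and only afterwards transfer them to the limit $F$ by a completeness argument — deliberately avoiding any direct application of Lemma 3.4 to the pair $(F,F_M)$, since $F$ is not given to lie in $\mathcal D_\infty$ nor to be non-degenerate. For (i), apply Lemma 3.4\,(i) to $F_M,F_N\in\mathcal D_\infty$: writing $p$ for the finite ($r,\delta$-dependent) exponent produced by that lemma, the hypothesis $\sup_M(\|F_M\|_{L,q_1,p}+\Sigma_{p_1}(F_M))\le Q_{q_1,p,p_1}$ bounds the prefactor uniformly, so
\[
d_{TV}(F_M,F_N)\le C_1\,d_r(F_M,F_N)^{1-\delta},\qquad C_1=C_1(r,\delta,Q_{q_1,p,p_1}),
\]
uniformly in $M,N$. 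Since $d_r$ satisfies the triangle inequality and $d_r(F,F_M)\to0$, the right-hand side tends to $0$ as $M,N\to\infty$, so $(\mathrm{Law}(F_M))_M$ is Cauchy in the space of probability measures on $\mathbb R$ equipped with the total variation distance, which is complete; hence $\mathrm{Law}(F_M)\to\mu$ in total variation for some probability measure $\mu$. Total variation convergence implies weak convergence, so $\int g\,d\mathrm{Law}(F_M)\to\int g\,d\mu$ for all $g\in C_c^\infty(\mathbb R)$; on the other hand, rescaling each such $g$ so that $\sum_{i\le r}\|g^{(i)}\|_\infty\le1$ and using $d_r(F,F_M)\to0$ gives $\int g\,d\mathrm{Law}(F_M)\to\mathbb E[g(F)]$. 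Therefore $\mu=\mathrm{Law}(F)$; in particular $d_{TV}(F,F_M)\to0$.

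For the rate in (i), fix $M$ and let $N\to\infty$ in the displayed inequality. Using the triangle inequality for $d_r$ together with the elementary bound $(a+b)^{1-\delta}\le a^{1-\delta}+b^{1-\delta}$ (valid for $a,b\ge0$ and $0<\delta<1$), the right-hand side is at most $C_1 d_r(F_M,F)^{1-\delta}+C_1 d_r(F,F_N)^{1-\delta}$; since $d_r(F,F_N)\to0$ while $|d_{TV}(F_M,F_N)-d_{TV}(F_M,F)|\le d_{TV}(F_N,F)\to0$, we conclude $d_{TV}(F,F_M)\le C_1 d_r(F,F_M)^{1-\delta}$, which is (i).

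Part (ii) is parallel. Under the second uniform bound, $\Sigma_{p_2}(F_M)<\infty$ with $p_2\ge 6l+4$ forces $\Sigma_{6l+4}(F_M)<\infty$, so by Lemma 3.3 each $F_M$ has an $l$-times differentiable density $p_{F_M}$; Lemma 3.4\,(ii) applied to $(F_M,F_N)$ then gives $\|p_{F_M}-p_{F_N}\|_{l,\infty}\le C_2\,d_r(F_M,F_N)^{1-\delta}$ with $C_2$ uniform in $M,N$, so $(p_{F_M})_M$ is Cauchy in the Banach space $C_b^l(\mathbb R)$ and converges in $\|\cdot\|_{l,\infty}$ to some $g\in C_b^l(\mathbb R)$. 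Combining with (i): $d_{TV}(F,F_M)\to0$ shows that $\mathrm{Law}(F)$, being a total variation limit of absolutely continuous laws, is absolutely continuous with some density $p_F$, and that $p_{F_M}\to p_F$ in $L^1(\mathbb R)$; comparison with the uniform limit forces $p_F=g$ a.e., so $p_F$ has a $C_b^l$ version and $p_{F_M}\to p_F$ in $\|\cdot\|_{l,\infty}$. Letting $N\to\infty$ in $\|p_{F_M}-p_{F_N}\|_{l,\infty}\le C_2\big(d_r(F_M,F)^{1-\delta}+d_r(F,F_N)^{1-\delta}\big)$ as before yields (ii).

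The main obstacle is conceptual rather than computational. A tempting ``direct'' route — use Lemma 3.2 to put $F$ itself into $\mathcal D_\infty$ with the inherited Sobolev and non-degeneracy bounds, then apply Lemma 3.4 to $(F,F_M)$ — would require the extra assumption $\mathbb E|F_M-F|\to0$ (hypothesis (i) of Lemma 3.2), which is strictly stronger than $d_r(F,F_M)\to0$ and is not granted here; the pairwise-plus-completeness argument above circumvents this entirely. The only genuinely delicate points are the two limit identifications — that the total variation limit of $\mathrm{Law}(F_M)$ equals $\mathrm{Law}(F)$, and that the $C^l$ limit of $p_{F_M}$ is a version of $p_F$ — each disposed of by the short weak-convergence observations indicated. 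It is also worth recording that the statement is substantive only for $\delta\in(0,1)$, which is precisely the range in which it is used.
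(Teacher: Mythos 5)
Your proposal is correct and follows essentially the same route as the paper's proof: apply Lemma~3.4 pairwise to $(F_M,F_N)$, use the uniform bounds to get a uniform constant, deduce Cauchy-ness of $(F_M)$ in $d_{TV}$ (resp.\ of $(p_{F_M})$ in $\|\cdot\|_{l,\infty}$), identify the limit with $F$ (resp.\ with $p_F$) via the given $d_r$-convergence, and pass to the limit $N\to\infty$. You supply somewhat more detail than the paper at the identification steps — the test-function argument showing the $d_{TV}$-limit law is $\mathrm{Law}(F)$, and the Scheff\'e-type comparison of the $L^1$ and uniform limits for (ii) — but the structure and the key lemma invoked are the same.
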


\begin{proof}
We take $C$ to be a constant depending on $p,p_1,q_1,r$ and $\delta$ which can change from one line to another. By \textbf{Lemma 3.4}, for every $M<M^{\prime}$, one has
\begin{eqnarray}
d_{TV}(F_{M},F_{M^{\prime}})\leq Cd_{r}(F_{M},F_{M^{\prime}})^{1-\delta }\leq C[d_{r}(F_{M},F)^{1-\delta }+d_{r}(F,F_{M^{\prime}})^{1-\delta }].\label{Coro341}
\end{eqnarray}So $(F_M)_{M\in\mathbb{N}}$ is a Cauchy sequence in $d_{TV}$. It follows that it has a limit $G$. But since $d_r(F_M,F)\rightarrow0$, it follows that $F=G$. Passing to the limit $M^{\prime}\rightarrow\infty$ in (\ref{Coro341}), we get
\[
d_{TV}(F_{M},F)\leq Cd_{r}(F_{M},F)^{1-\delta }.
\]
The proof of $ii)$ is analogous.
\end{proof}

\bigskip

\bigskip

\section{Malliavin calculus and stochastic differential equations with jumps}
In this section we present the integration by parts framework that will be
used in the following. To begin we give a quick informal presentation of our
strategy. We will work with the solution of the equation (\ref{1.4}), but, for
technical reasons, we make the change of variable $z\mapsto \frac{1}{z}$
so the equation of interest is now the equation (\ref{3.02}). We use the notation
from that section. The intensity measure for our random measure is $%
\mathbbm{1}_{[1,M)}(z)\nu (dz)ds$ and this is a finite measure. Then the corresponding
Poisson Point measure $N_{\nu }$ may be represented by means of a 
compound
Poisson process. For some technical reasons, we produce the representation on each set $\{z\in I_k=[k,k+1)\}, k\in\mathbb N$, so the equation (\ref{3.02}) reads%
\begin{eqnarray*}
\widehat{X}_{t}^{M } &=&x+\int_{0}^{t}\sum_{k=1}^{M-1}\int_{\{z \in
I_k\}}\widetilde{c}(s,z,\widehat{X}_{s-}^{M})N_{\nu
}(ds,dz)   \nonumber\\
&&+\int_{0}^{t}b_{M}(s,\widehat{X}_s^{M})ds+\int_{0}^{t}\int_{\{z\geq M\}}\widetilde{c}(s,z,\widehat{X}_{s}^{M})W_\nu(ds,dz)   \nonumber\\
&=&x+\sum_{k=1}^{M-1}\sum_{i=1}^{J_t^k}\widetilde{c}(T_i^k,Z_i^k,\widehat{X}_{T_i^k-}^{M})   \nonumber\\
&&+\int_{0}^{t}b_{M}(s,\widehat{X}_s^{M})ds+\int_{0}^{t}\int_{\{z\geq M\}}\widetilde{c}(s,z,\widehat{X}_{s}^{M})W_\nu(ds,dz)  .
\end{eqnarray*}%
Here $T_i^k, k,i\in\mathbb{N}$ are the jump times of the Poisson process $(J_t^k)_{t\in[0,T]}$ of parameter $\nu(I_k)$, and  $Z_i^k, k,i\in \mathbb{N}$ are independent random
variables of law $\mathbbm{1}_{I_k}(z)\frac{\nu(dz)}{\nu(I_k)}$, which are independent of $J^k$ as well. We will work
conditionally to $T_i^k, k,i\in\mathbb{N}$, so the randomness in the system comes from $%
W_{\nu }$ on one hand and from $Z_i^k, k,i\in \mathbb{N}$ on the other hand. Concerning $%
W_{\nu }$ we will use the standard Malliavin calculus (which fits in the
framework presented in Section 3). But we will also use this integration by parts calculus
with respect to the amplitude of the jumps given by $Z_i^k, k,i\in \mathbb{N}.$ We
present this kind of calculus now.

Suppose for a moment (just for simplicity) the law of $Z_i^k$ is absolutely
continuous with respect to the Lebesgue measure and has a smooth density $%
h_k(z)$ which has compact support. We also assume that the logarithm of the density $\ln h_k$ is smooth. Then we look to $\widehat{X}_{t}^{M}$ as to a functional $%
F(Z_{1}^1,...,Z^{M-1}_{J_{t}^{k}})$ and we define the derivative operators%
\begin{equation*}
D_{k,i}^{Z}F=\frac{\partial }{\partial z_{i}^k}F(Z_{1}^1,...,Z^{M-1}_{J_{t}^{k}}),
\end{equation*}%
and 
\begin{equation*}
L^{Z}F=-\sum_{k,i}D_{k,i}^{Z}D_{k,i}^{Z}F+D_{k,i}^{Z}F\times \partial _{z}\ln h_k(Z^k_i).
\end{equation*}%
And we check that these operators verify the conditions in Section 3. Since
we want to use integration by parts with respect to both $W_{\nu }$ and $%
Z_i^k, k,i\in \mathbb{N}$, we will consider the derivative operator $D=(D^{W},D^{Z})$ and
the operator $L=(L^{W},L^{Z})$ where $D^{W}$ and $L^{W}$ are the derivative
and Ornstein Uhlenbeck operators from the standard Malliavin calculus for Gaussian random variables. With
these operators at hand we check the hypotheses of \textbf{Lemma 3.4} and of
\textbf{Corollary 3.4.1}, and these are the results which allow as to prove our 
\textbf{Theorem 2.2}.

Roughly speaking this is our strategy. But there is one more point: the
hypotheses we raise for the law of $Z^{k}_i$ that it has a smooth density with compact support and has a smooth logarithm density, is rather strong and we want to weaken it. This is the aim of the
"splitting method". This amounts to produce three independent random
variables $V_i^{k},U_i^{k}$ and $\xi _i^{k}$ such that $Z^{k}_i$ has the same law as 
$\xi _i^{k}V_i^{k}+(1-\xi _i^{k})U_i^{k}$ with $\xi _i^{k}$ a Bernoulli random
variable and $V_i^{k}$ a random variable with good properties. So we split $%
Z^{k}_i$ in two parts, $V_i^{k}$ and $U_i^{k}.$ We may do it in such a way that $%
V_i^{k}$ has the law $\psi _{k}(v)dv$ with $\psi _{k}\in C_{c}^{\infty }(\mathbb{R})$
(see Section 4.1 for the precise procedure). And we perform the Malliavin
calculus with respect to $V_i^{k}$ instead of $Z^{k}_i$ (we work conditionally to 
$\xi _i^{k}$ and $U_i^{k}$ which appear as constants).

\subsection{The splitting method}
We consider a Poisson point measure $N_\nu(ds,dz)$ with compensator $\widehat{N}_\nu(ds,dz)=\nu (dz)ds$ on the state space $[1,\infty)$. We will make use of the noise $%
z\in [1,\infty)$ in order to apply the
results from the previous section.
We recall that $I_{k}=[k,k+1)$ and $m_{k}=\nu(I_{k})$, and we
suppose that for every $k$, there exists $\varepsilon _{k}>0$, such
that 
\begin{equation}
\mathbbm{1}_{I_k}(z)\frac{\nu (dz)}{m_k}\geq \mathbbm{1}_{I_k}(z)\varepsilon _{k} \times dz. \label{3.13}
\end{equation}%
\begin{remark}
Under \textbf{Hypothesis 2.4} $(a)$, the splitting condition (\ref{3.13}) is satisfied with 
$\varepsilon_k=\frac{\varepsilon_\ast}{(k+1)^{1-{\alpha}}}$, for any ${\alpha}\leq\alpha_1$. 
If instead we assume \textbf{Hypothesis 2.4} $(b)$, (\ref{3.13}) is also satisfied, with $\varepsilon_k=\frac{\varepsilon_\ast}{k+1}$.
\end{remark}

When (\ref{3.13}) is satisfied, we are able to use the "\textbf{splitting method" }as follows. 
To begin we define the functions
\begin{eqnarray}
	a(y) &=&1-\frac{1}{1-(4y-1)^{2}}\quad for\quad y\in \lbrack \tfrac{1}{4}, \tfrac{1}{2})  \label{3.14} \\
\psi (y) &=&\mathbbm{1}_{\{\left\vert y\right\vert \leq \frac{1}{4}\}}+\mathbbm{1}_{\{\frac{1}{4}%
< \left\vert y\right\vert \leq \frac{1}{2}\}}e^{a(\left\vert y\right\vert
)}.  \label{3.14*}
\end{eqnarray}%
We notice that $\psi \in C_{c}^{\infty }(\mathbb{R})$ and that its support is included in $[-\frac{1}{2},\frac{1}{2}]$. 
We also notice that for every $q,p\in \mathbb{N}$ the
function $y\mapsto \vert a^{(q)}(y)\vert ^{p}\psi (y)$ is
continuous and has support included in $[-\frac{1}{2},\frac{1}{2}],$
so it is bounded: one may find $C_{q,p}$ such that 
\begin{equation}
\big\vert a^{(q)}(y)\big\vert ^{p}\psi (y)\leq C_{q,p}\quad \forall y\in
\mathbb{R}.  \label{3.15}
\end{equation}%
We denote
\begin{equation}
\psi_{k}(y)=\psi(y-(k+\tfrac{1}{2})),\quad\theta _{k}(y):=\partial_{y}\ln \psi _{k}(y).  \label{3.19*}
\end{equation}%
By (\ref{3.15}) (which is uniform with respect to $y$), we have
\begin{equation}
	\sup_{k}\big\vert (\ln \psi _{k})^{(q)}(y)\big\vert ^{p}\psi _{k}(y)
	\leq C_{q,p}\quad \forall y\in \mathbb{R}.  \label{3.19}
\end{equation}%
We denote%
\begin{equation}
m(\psi )=\int_{-1/2}^{1/2}\psi (y)dy.  \label{3.16}
\end{equation}

We consider a sequence of independent random variables $Z^{k}$ such that 
\[
Z^{k}\sim \mathbbm{1}_{I_{k}}(z)\frac{1}{m_{k}}\nu (dz). 
\]%
This is the sequence of random variables which are involved in the
representation of the measure $N_\nu(ds,dz)$ as long as $z\in [1,\infty)$ is
concerned. We notice that, according to our hypothesis (\ref{3.13}),
\[
\mathbb{P}(Z^{k}\in
dz)=\mathbbm{1}_{I_k}(z)\frac{\nu (dz)}{m_k}\geq \mathbbm{1}_{I_k}(z)\varepsilon _{k} \times dz. 
\]%
Then we construct some independent random variables $V^{k},U^{k},\xi ^{k}$
with laws%
\begin{eqnarray}
\mathbb{P}(V^{k} \in dz)&=&\frac{1}{m(\psi )}\psi ({z-(k+\frac{1}{2})})dz \nonumber\\
\mathbb{P}(U^{k} \in dz)&=&\frac{1}{1-\varepsilon _{k}m(\psi )}(\mathbb{P}(Z^{k}\in
dz)-\varepsilon _{k}\psi ({z-(k+\frac{1}{2})})dz) \label{split}  \\ 
\mathbb{P}(\xi ^{k} =1)&=&\varepsilon _{k}m(\psi ),\quad \mathbb{P}(\xi
^{k}=0)=1-\varepsilon _{k}m(\psi ). \nonumber
\end{eqnarray}%
We choose $\varepsilon _{k}<1/m(\psi )$ so that $%
1-\varepsilon _{k}m(\psi )>0.$ Using (\ref{3.13}), one may check that $\mathbb{P}(U^{k}\in dz)$ is a positive measure and has mass one. So it is a probability measure. And
finally one can easily check the identity of laws:
\begin{equation}
Z^{k}\sim \xi ^{k}V^{k}+(1-\xi ^{k})U^{k}.  \label{3.18}
\end{equation}

In the following, we will work directly with $Z^{k}= \xi
^{k}V^{k}+(1-\xi ^{k})U^{k}.$ This is possible because all the results that
we discuss here concern the law of the random variables, and the law remains
unchanged.

\bigskip

The Poisson point measure $N_\nu$ can be written as the following sum:
\[
N_\nu(ds,dz)=\sum_{k=1}^\infty\mathbbm{1}_{I_k}(z)N_\nu(ds,dz)=\sum_{k=1}^\infty N_{\nu_k}(ds,dz),
\]where $\nu_k(dz)=\mathbbm{1}_{I_k}(z)\nu(dz)$ and $N_{\nu_k}$ is a Poisson point measure with intensity $\nu_k(dz)ds$.

The Poisson point measure $N_{\nu_k}$ can be represented by means of compound Poisson processes as follows. For each $k\in\mathbb{N}$, we denote by $T_i^k,i\in\mathbb{N}$ the jump times of a Poisson process $(J_t^k)_{t\in[0,T]}$ of parameter $m_k$, and we consider a sequence of independent random variables $Z_i^k\sim\mathbbm{1}_{I_k}(z)\frac{\nu(dz)}{m_k}, i\in\mathbb{N}$, which are independent of $J^k$ as well. Then, for any $t>0$ and $A\in\mathcal{B}([k,k+1))$,
$N_{\nu_k}([0,t]\times A)=\sum\limits_{i=1}^{J_t^k}\mathbbm{1}_{A}(Z_i^k).$
And for each $k,i\in\mathbb{N}$, we will split $Z_i^k$ as 
$Z_i^k=\xi_i^kV_i^k+(1-\xi_i^k)U_i^k.$

\begin{remark} 
	The law of $Z^k_i$ could be very irregular and it is not possible to make integration by parts based on it. So we construct the $V^k_i$, which has all the good regularity properties in order to make Malliavin calculus. This is the idea of the splitting method. The splitting method presented here is analogous to the one in $\cite{ref10}$. Therein, Bichteler, Gravereau and Jacod deal with 2 kinds of independent Poisson point measures. One is very regular, and  smooth enough to make Malliavin calculus on it (in our paper, $V^k_i$ play the same role). The other one can be arbitrary, and it may be very irregular (in our paper, it corresponds to $U^k_i$). But the difference is that instead of splitting the Poisson point measure, we split the random variables, and so this method can also be applied in a large class of different problems. For example, Bally, Caramellino and Poly use the splitting method to show the convergence in total variation distance in the central limit theorem in $\cite{ref3}$. Other possible approaches to the Malliavin calculus for jump processes are given in the papers $\cite{ref33},\cite{ref34},\cite{ref36},\cite{ref35}$ and the book $\cite{ref37}$ for example.
\end{remark}

\subsection{Malliavin calculus for Poisson point measures and space-time Brownian motions}
In this section we present the IbP framework on a space where we have
the Poisson point measure $N_{\nu }$ presented in the previous section and
moreover we have a space-time Brownian motion $W_{\nu }(ds,dz)$ with covariance measure $\nu (dz)ds$, which is independent of $N_\nu$. We recall that in Section 2.2 we have introduced the random variables $W_\nu(\varphi), N_\nu(\varphi)$ and the filtrations $(\mathcal{F}_t^W)_{t\in[0,T]}$, $(\mathcal{F}_t^N)_{t\in[0,T]}$, and we denote $\mathcal{F}_t=\mathcal{F}_t^W\bigvee\mathcal{F}_t^N$. We present now the Malliavin calculus. 
We recall the random variables $T^k_i$, and $Z_i^k=\xi_i^kV_i^k+(1-\xi_i^k)U_i^k$ introduced in the
previous section and we take $\mathcal{G}=\sigma(U^{k}_i,\xi ^{k}_i,T_i^k: k,i\in \mathbb{N})$\ to be the $\sigma-$algebra associated to the noise
from $U^{k}_i,\xi ^{k}_i,T_i^k,\ k,i\in \mathbb{N}.$ These are the noises which will not be involved in the Malliavin calculus.
We denote by $C_{%
\mathcal{G},p}$ the space of the functions $f:\Omega \times \mathbb{R}^{m^\prime\times m}\times\mathbb{R}^{n}\rightarrow\mathbb{R}$
such that $f$ is $\mathcal{F}_T-$measurable, and for each $\omega ,$ the function $(v_{1}^1,...,v^{m}_{m^\prime},w_1,\cdots,w_{n})\mapsto f(\omega ,v_{1}^1,...,v^{m}_{m^\prime},w_1,\cdots,w_{n})$ belongs to $%
C_{p}^{\infty }(\mathbb{R}^{m^\prime\times m}\times\mathbb{R}^{n})$, and for each $(v_{1}^1,...,v^{m}_{m^\prime},w_1,\cdots,w_{n})$, the
function $\omega \mapsto f(\omega ,v_{1}^1,...,v^{m}_{m^\prime},w_1,\cdots,w_{n})$ is $%
\mathcal{G}$-measurable. Then we define the space of simple functionals
\begin{equation*}
\mathcal{S}=\{F=f (\omega ,(V_{i}^k)_{\substack{1\leq i\leq m^\prime\\1\leq k\leq m}},(W_\nu(\varphi_{j}))_{j=1}^{n}) :f\in C_{%
\mathcal{G},p}, \varphi_1,\cdots,\varphi_n\in L^2([1,\infty)\times[0,T],\nu\times Leb), m^\prime,m,n\in \mathbb{N}\}.
\end{equation*}%
On the space $\mathcal{S}$ we define the derivative operators
\begin{eqnarray*}
&& D^{Z}_{(k_0,i_0)}F=\mathbbm{1}_{\{k_0\leq m\}}\mathbbm{1}_{\{i_0\leq m^\prime\}}\xi _{i_0}^{k_0}\frac{%
\partial f}{\partial v^{k_0}_{i_0}}(\omega ,(V_{i}^k)_{\substack{1\leq i\leq m^\prime\\1\leq k\leq m}},(W_\nu(\varphi_{j}))_{j=1}^{n}),\quad k_0,i_0\in\mathbb{N}\\
&&D^{W}_{(s,z)}F=\sum_{r=1}^{n}\frac{%
\partial f}{\partial w_r}(\omega ,(V_{i}^k)_{\substack{1\leq i\leq m^\prime\\1\leq k\leq m}},(W_\nu(\varphi_{j}))_{j=1}^{n})\varphi_r(s,z),\quad (s,z)\in[0,T]\times[1,\infty).  
\end{eqnarray*}%
We regard $D^{Z}F$ as an element of the Hilbert space $l_{2}$\ (the space of
the sequences $h=(h^k_{i})_{k,i\in \mathbb{N}}$ with $\left\vert h\right\vert
_{l_{2}}^{2}=\sum_{k=1}^{\infty }\sum_{i=1}^{\infty }\vert h^k_{i}\vert^{2}<\infty$) and $D^{W}F$ as an element
of the Hilbert space $L^2([1,\infty)\times[0,T],\nu\times Leb).$ Then
\[
DF:=(D^{Z}F,D^{W}F)\in l_{2}\otimes L^2([1,\infty)\times[0,T],\nu\times Leb).
\]%

We also denote $D^{Z,W}F=DF$ and $\mathcal{H}=l_{2}\otimes L^2([1,\infty)\times[0,T],\nu\times Leb)$. And we have%
\[
\left\langle DF,DG\right\rangle_{\mathcal{H}} =\sum_{k=1}^{\infty }\sum_{i=1}^{\infty }D^{Z}_{(k,i)}F\times D^{Z}_{(k,i)}G+\int_{[0,T]\times[1,\infty)}D^{W}_{(s,z)}F\times D^{W}_{(s,z)}G\ \nu(dz)ds. 
\]%
Moreover, we define the derivatives of order $q\in \mathbb{N}$ recursively: 
\begin{equation*}
D^{Z,W,q}_{(k_{1},i_1)\cdots(k_{q},i_q),(s_{1},z_1)\cdots(s_{q},z_q)}F:=D^{Z,W}_{(k_{q},i_q),(s_q,z_q)}D^{Z,W}_{(k_{q-1},i_{q-1}),(s_{q-1},z_{q-1})}\cdots D^{Z,W}_{(k_{1},i_1),(s_1,z_1)}F,
\end{equation*}%
and we denote $D^qF=D^{Z,W,q}F$. We also denote $D^{Z,q}$ (respectively $D^{W,q}$) as the derivative $D^Z$ (respectively $D^W$) of order $q$.

We recall the function $\theta_k$ defined in (\ref{3.19*}) and we define the Ornstein-Uhlenbeck operators $L^{Z}$, $L^{W}$ and $L=L^{Z}+L^{W}$ (which
verify the duality relation), with
\begin{eqnarray*}
L^ZF&=&-\sum_{k=1}^{m }\sum_{i=1}^{m^\prime }(D^{Z}_{(k,i)}D^{Z}_{(k,i)}F+\xi^k_i D^{Z}_{(k,i)}F\times \theta _{k}(V^{k}_i)),\\
L^WF&=&\sum_{r=1}^{n}\frac{%
\partial f}{\partial w_r}(\omega ,(V_{i}^k)_{\substack{1\leq i\leq m^\prime\\1\leq k\leq m}},(W_\nu(\varphi_{j}))_{j=1}^{n})W_\nu(\varphi_r)\\
&-&\sum_{l,r=1}^{n}\frac{%
\partial^2 f}{\partial w_l\partial w_r}(\omega ,(V_{i}^k)_{\substack{1\leq i\leq m^\prime\\1\leq k\leq m}},(W_\nu(\varphi_{j}))_{j=1}^{n})\langle\varphi_l,\varphi_r\rangle_{L^2([1,\infty)\times[0,T],\nu\times Leb)}.
\end{eqnarray*}

One can check that the triplet $(\mathcal{S},D,L)$ is consistent with the IbP framework given in Section 3. The proof is left to Appendix 5.3.

In the following, we will close the operator $D^q$ and $L$, so we will use the IbP framework $(\mathcal{D}_\infty,D,L)$ associated to $(\mathcal{S},D,L)$ in \textbf{Lemma 3.1}.

\subsection{Malliavin calculus applied to stochastic differential equations with jumps}
Now we will use the IbP framework presented in Section 4.2 for the equation (\ref{1.4}). 

Let $\theta:(0,1]\rightarrow[1,\infty)$ be a function such that $\theta(z)=\frac{1}{z}$. By a change of variables, instead of dealing with equation (\ref{1.4}), it is equivalent to consider the following equation.
\begin{eqnarray}
\widehat{X}_{t}^{M } &=&x+\int_{0}^{t}\int_{[1,M)}\widetilde{c}(s,z,\widehat{X}_{s-}^{M})N_{\nu
}(ds,dz)   \nonumber\\
&&+\int_{0}^{t}b_{M}(s,\widehat{X}_{s}^{M})ds+\int_{0}^{t}\int_{\{z \geq M\}}\widetilde{c}(s,z,\widehat{X}_{s}^{M})W_\nu(ds,dz) , \label{3.02}
\end{eqnarray}%
where $M=\frac{1}{\varepsilon}$, $\nu(dz)=\mu\circ\theta^{-1}(dz)$, $\widetilde{c}(s,z,x)=c(s,\frac{1}{z},x)$,
\begin{eqnarray}
b_{M}(s,x) =\int_{\{z\geq M\}}\widetilde{c}(s,z,x)\nu (dz),  \label{3.33} 
\end{eqnarray}%
and $W_\nu$ is the space-time Brownian motion with covariance measure $\nu(dz)ds$. One can check that $\widehat{X}_{t}^{M }$ has the same law as $X_{t}^{\varepsilon }$.

Here we give two lemmas, concerning the Malliavin-Sobolev norms and the Malliavin covariance. We recall that $\varepsilon_\ast$ and $\alpha$ are introduced in \textbf{Hypothesis 2.4} (b), and $q^\ast$ is introduced in \textbf{Hypothesis 2.1}. 

\begin{lemma}
Assuming \textbf{Hypothesis 2.1} with $q^\ast\geq2$ and \textbf{Hypothesis 2.4} (either 2.4(a) or 2.4(b)), we have $\widehat{X}_t^M\in\mathcal{D}_\infty$, and for all $p\geq1, 2\leq l\leq q^\ast$, there exists a constant $C_{l,p}(T)$ depending on $l,p,x$ and $T$, such that $\sup\limits_{M}\Vert \widehat{X}_t^{M}\Vert_{L,l,p}\leq C_{l,p}(T)$.
\end{lemma}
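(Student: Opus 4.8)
The plan is to differentiate the equation (\ref{3.02}) and obtain, for each order $q$, a linear stochastic equation satisfied by $D^{q}\widehat{X}_{t}^{M}$ (and, separately, one carrying $L\widehat{X}_{t}^{M}$), and then to control every $L^{p}$-moment that enters $\Vert\widehat{X}_{t}^{M}\Vert_{L,l,p}$ by induction on $q$, using Burkholder's inequality in the form (\ref{Burk}) for the $N_{\nu}$-integrals, the Burkholder--Davis--Gundy inequality for the $W_{\nu}$-integrals, and Gronwall's lemma. To make this rigorous one notes that, conditionally on $\mathcal{G}$, $\widehat{X}_{t}^{M}$ is a functional of finitely many of the $V_{i}^{k}$ and of the Gaussian field $W_{\nu}$; so one first runs the argument on the Euler (or Picard) approximations $\widehat{X}_{t}^{M,n}\in\mathcal{S}$ of (\ref{3.02}), getting the bounds uniformly in $n$ and $M$, and then invokes \textbf{Lemma 3.2} together with $\mathbb{E}\vert\widehat{X}_{t}^{M,n}-\widehat{X}_{t}^{M}\vert\to0$ (classical) to conclude $\widehat{X}_{t}^{M}\in\mathcal{D}_{l,p}$ for all $p\ge1$ and $2\le l\le q^{\ast}$, with the stated uniform bound. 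The thing to monitor throughout is the uniformity in $M$: it will follow from three observations --- the drift $b_{M}$ of (\ref{3.33}) and all its $x$-derivatives are bounded by $\int_{\{z\ge M\}}\bar{c}(z)\nu(dz)\le\bar{c}_{1}$ uniformly in $M$; the $W_{\nu}$-integral over $\{z\ge M\}$ contributes only $\int_{0}^{t}\int_{\{z\ge M\}}\vert\bar{c}(z)\vert^{2}\nu(dz)\,ds\le t\,\bar{c}_{2}$; and the jump part over $[1,M)$ produces only constants of the form $\bar{c}_{p}$, the $C_{p}$ of (\ref{Burk}), or $\sum_{k}m_{k}\vert\bar{c}(k)\vert^{p}$, none of which grows with $M$.

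\textbf{First-order derivatives.} Applying $D^{W}_{(s,z)}$ and $D^{Z}_{(k,i)}$ to (\ref{3.02}) and using the chain rule (\ref{0.00}), one checks that $Y_{r}:=D^{W}_{(s,z)}\widehat{X}_{r}^{M}$ and $Y_{r}:=D^{Z}_{(k,i)}\widehat{X}_{r}^{M}$ both solve a linear equation
\begin{align*}
Y_{r}&=S_{r}+\int_{0}^{r}\!\!\int_{[1,M)}\partial_{x}\widetilde{c}(u,z',\widehat{X}_{u-}^{M})\,Y_{u-}\,N_{\nu}(du,dz')+\int_{0}^{r}\partial_{x}b_{M}(u,\widehat{X}_{u}^{M})\,Y_{u}\,du\\
&\quad+\int_{0}^{r}\!\!\int_{\{z'\ge M\}}\partial_{x}\widetilde{c}(u,z',\widehat{X}_{u}^{M})\,Y_{u}\,W_{\nu}(du,dz'),
\end{align*}
where the source is $S_{r}=\mathbbm{1}_{\{s\le r\}}\mathbbm{1}_{\{z\ge M\}}\widetilde{c}(s,z,\widehat{X}_{s-}^{M})$ in the Gaussian case and $S_{r}=\mathbbm{1}_{\{T_{i}^{k}\le r\}}\xi_{i}^{k}\,\partial_{z}\widetilde{c}(T_{i}^{k},Z_{i}^{k},\widehat{X}_{T_{i}^{k}-}^{M})$ in the jump case. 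Since $\vert\partial_{x}\widetilde{c}\vert,\vert\partial_{z}\widetilde{c}\vert\le\bar{c}$, (\ref{Burk}), BDG and Gronwall give $\mathbb{E}\sup_{r\le t}\vert Y_{r}\vert^{p}\le C_{p}(T)(1+\mathbb{E}\sup_{r\le t}\vert S_{r}\vert^{p})$. It then remains to sum over the components of $\mathcal{H}=l_{2}\otimes L^{2}$: for the $l_{2}$-part, $\vert S\vert^{2}\le\bar{c}(Z_{i}^{k})^{2}\le\bar{c}(k)^{2}$ on $\{T_{i}^{k}\le t\}$ (as $\bar{c}$ is decreasing), and $\mathbb{E}\big(\sum_{k<M}\sum_{i\le J_{t}^{k}}\bar{c}(k)^{2}\big)^{p/2}$ is bounded uniformly in $M$ by quantities of the form $\bar{c}_{q}$ and $\sum_{k}m_{k}\vert\bar{c}(k)\vert^{q}$, the latter finite because $\bar{c}$ is decreasing and $\int_{I_{k}}\vert\bar{c}\vert^{q}d\nu\ge\vert\bar{c}(k+1)\vert^{q}m_{k}$; for the $L^{2}$-part, $\int_{0}^{t}\int_{\{z\ge M\}}\vert\widetilde{c}(u,z,\widehat{X}_{u-}^{M})\vert^{2}\nu(dz)\,du\le t\,\bar{c}_{2}$ uniformly. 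This gives $\sup_{M}\Vert\widehat{X}_{t}^{M}\Vert_{1,1,p}<\infty$.

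\textbf{Higher derivatives and the Ornstein--Uhlenbeck term.} One proceeds by induction on $q$ with $q\le l\le q^{\ast}$. Differentiating (\ref{3.02}) $q$ times via (\ref{0.00}), $D^{q}\widehat{X}_{t}^{M}$ solves the same linear equation, but with a source $S^{(q)}$ which (Fa\`a di Bruno) is a finite sum of terms $\partial_{z}^{a}\partial_{x}^{b}\widetilde{c}(u,z,\widehat{X}^{M})\prod_{j}D^{q_{j}}\widehat{X}^{M}$ with $q_{1}+\cdots+q_{b}\le q$ and $a+b\le q$, appearing either at the jump points (for the $D^{Z}$-components) or inside the $W_{\nu}$- and $du$-integrals; every derivative of $\widetilde{c}$ that occurs is dominated by $\bar{c}$ precisely because $a,b\le q\le q^{\ast}$, which is where the hypothesis $l\le q^{\ast}$ is used. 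By the induction hypothesis each $D^{q_{j}}\widehat{X}^{M}$ has all $L^{p}$-moments bounded uniformly in $M$, so by H\"older and the block-summation of the previous step so does $S^{(q)}$, and (\ref{Burk})$+$BDG$+$Gronwall close the induction, yielding $\sup_{M}\Vert\widehat{X}_{t}^{M}\Vert_{l,p}<\infty$ for $2\le l\le q^{\ast}$. Finally, for $\Vert L\widehat{X}_{t}^{M}\Vert_{l-2,p}$ one uses $L=L^{Z}+L^{W}$ and the explicit formulas of Section 4.2 to express $L\widehat{X}_{t}^{M}$ through $D^{Z}\widehat{X}_{t}^{M}$, $D^{Z,2}\widehat{X}_{t}^{M}$, the weights $\theta_{k}(V_{i}^{k})$, and through $D^{W}\widehat{X}_{t}^{M}$, $D^{W,2}\widehat{X}_{t}^{M}$, the Gaussian weights $W_{\nu}(\varphi_{r})$; the $\theta_{k}$ and all their derivatives have moments bounded uniformly in $k$ by (\ref{3.19}) --- this is where the splitting, hence \textbf{Hypothesis 2.4} which guarantees (\ref{3.13}), enters --- and the derivatives of $\widehat{X}_{t}^{M}$ up to order $l$ were just bounded, so differentiating this expression $l-2$ times and applying H\"older gives $\sup_{M}\Vert L\widehat{X}_{t}^{M}\Vert_{l-2,p}<\infty$. (Alternatively, commuting $L$ with (\ref{3.02}) produces a linear equation for $L\widehat{X}_{t}^{M}$ with a source built from $D\widehat{X}^{M}$, $D^{2}\widehat{X}^{M}$ and derivatives of $\widetilde{c}$ of order $\le 2$, again closed by Gronwall.)

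\textbf{Main obstacle.} The Burkholder--Gronwall part is routine; the real work is (a) the bookkeeping of the higher-order source terms and the check that every derivative of $\widetilde{c}$ appearing is dominated by $\bar{c}$ (which forces $l\le q^{\ast}$) and, above all, (b) extracting the uniformity in $M$ --- verifying that each constant produced is of one of the types $\bar{c}_{p}$, $C_{p}$ of (\ref{Burk}), $\sum_{k}m_{k}\vert\bar{c}(k)\vert^{p}$, a $\nu(\{z\ge M\})$-weighted quantity, or $C_{q,p}$ of (\ref{3.19}), none of which grows with $M$. The slightly delicate point inside (b) is the summation over the jump-blocks $k$ in the $l_{2}$-norm, handled by the monotonicity of $\bar{c}$ together with the comparison $\int_{I_{k}}\vert\bar{c}\vert^{q}\,d\nu\ge\vert\bar{c}(k+1)\vert^{q}m_{k}$ (treating the first block separately if needed).
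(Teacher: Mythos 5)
Your overall strategy is the same as the paper's (Euler approximation $\widehat{X}_t^{n,M}\in\mathcal{S}$, recurrence on the order $l$ with Burkholder/BDG/Gronwall, passage to the limit via \textbf{Lemma 3.2}), but there is a genuine gap in the way you propose to control the Ornstein--Uhlenbeck part $\|L\widehat{X}_t^M\|_{l-2,p}$. You claim the contribution of the weights $\theta_k(V_i^k)$ (equivalently, of $LZ_i^k$) is handled because ``the $\theta_k$ and all their derivatives have moments bounded uniformly in $k$ by (\ref{3.19})'' plus H\"older, and you list the only constants arising as being of the types $\bar{c}_p$, $C_p$ of (\ref{Burk}), $\sum_k m_k|\bar{c}(k)|^p$, a $\nu(\{z\ge M\})$-weighted quantity, or $C_{q,p}$ of (\ref{3.19}). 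This is not enough: uniform moments in $k$ do not control a sum over unboundedly many blocks $k$ and unboundedly many jumps $i\le J_t^k$. If one writes $\sum_{k<M}\sum_{i\le J_t^k}\bar{c}(Z_i^k)|LZ_i^k|_l^{\tilde p}$ as a compensated Poisson integral (over $\{0,1\}\times[1,\infty)$, since $\bar{c}(Z_i^k)\xi_i^k=\bar{c}(V_i^k)\xi_i^k$) and applies Burkholder, the indicator $\xi_i^k$ and the density $\psi_k/m(\psi)$ of $V_i^k$ produce, after (\ref{3.19}) has absorbed the $(\ln\psi_k)^{(q)}$'s, the quantity
\[
\sum_k \varepsilon_k \int_{I_k}|\bar{c}(v)|^p\,dv
\]
— note the splitting parameters $\varepsilon_k$, not $m_k=\nu(I_k)$, and Lebesgue measure, not $\nu$. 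This is \emph{not} one of the types on your list: it is not bounded by $\bar{c}_p=\int|\bar{c}|^p d\nu$ nor by $\sum_k m_k\bar{c}(k)^p$, and its finiteness is precisely the assumption (\ref{aLfault}) under \textbf{Hypothesis 2.4}(a) (where $\varepsilon_k=\varepsilon_\ast/(k+1)^{1-\alpha_0}$) or (\ref{bLfault}) under 2.4(b) (where $\varepsilon_k=\varepsilon_\ast/(k+1)$). This computation is exactly the paper's Lemma 5.4, and it is the step your outline skips.

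Relatedly, you misidentify the role of \textbf{Hypothesis 2.4} in this lemma when you say it ``enters'' because it guarantees the splitting condition (\ref{3.13}). For the $D$-derivatives that is indeed all that is needed (your $l_2$-summation via $\bar{c}(Z_i^k)\le\bar{c}(k)$ and (\ref{Burk}) is fine, modulo a cosmetic index shift in $\int_{I_k}|\bar{c}|^q d\nu\ge|\bar{c}(k+1)|^q m_k$, which the paper sidesteps by applying (\ref{Burk}) directly). But for the $L$-part it is the integrability conditions (\ref{aLfault})/(\ref{bLfault}) in Hypothesis 2.4 that make the $\varepsilon_k$-weighted sum converge uniformly in $M$, and without them the argument fails. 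Your proposed alternative of commuting $L$ through (\ref{3.02}) does not avoid this: the source of the resulting linear equation still contains $L^Z$ acting on the jump terms, hence the same $\theta_k(V_i^k)$-weighted sums.
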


\begin{lemma}
Assume that \textbf{Hypothesis 2.1} with $q^\ast\geq1$ and \textbf{Hypothesis 2.2, 2.3} hold true. 

a) If we also assume \textbf{Hypothesis 2.4} (a), then for every $p\geq1$, $t\in[0,T]$, we have
\begin{eqnarray}
\sup\limits_{M}\mathbb{E}(1/\sigma_{\widehat{X}_t^M})^p\leq C_p, \label{cM8}
\end{eqnarray}
with $C_p$ a constant only depending on $p$ and $T$.

b) If we assume \textbf{Hypothesis 2.4} (b), then for every $p\geq1$, $t\in[0,T]$ such that $t>\frac{4p\alpha}{\varepsilon_{\ast}}$, we have
$\sup\limits_{M}\mathbb{E}(1/\sigma_{\widehat{X}_t^M})^p\leq C_p$.
\end{lemma}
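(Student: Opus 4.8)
The plan is to estimate the negative moments of $\sigma_{\widehat X_t^M}$ by exploiting its decomposition into the jump‑amplitude part and the Gaussian part,
\[
\sigma_{\widehat X_t^M}=\langle D\widehat X_t^M,D\widehat X_t^M\rangle_{\mathcal H}=\sum_{k,i}\big|D^{Z}_{(k,i)}\widehat X_t^M\big|^2+\int_0^t\int_{\{z\ge M\}}\big|D^{W}_{(s,z)}\widehat X_t^M\big|^2\,\nu(dz)\,ds=:\sigma^{Z}_t+\sigma^{W}_t ,
\]
both terms being non‑negative. First I would write the Malliavin derivatives through the first variation process $Y_{s\to t}$ (the derivative in the initial condition of the flow of (\ref{3.02}), which solves a linear equation driven by $N_\nu$ and $W_\nu$): for $T_i^k\le t$ one has $D^{Z}_{(k,i)}\widehat X_t^M=\xi_i^k\,\partial_z\widetilde c(T_i^k,Z_i^k,\widehat X_{T_i^k-}^M)\,Y_{T_i^k\to t}$, and for $s\le t$, $z\ge M$ one has $D^{W}_{(s,z)}\widehat X_t^M=\widetilde c(s,z,\widehat X_{s}^M)\,Y_{s\to t}$. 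The ellipticity Hypothesis 2.3 then yields the pointwise bound
\[
\sigma_{\widehat X_t^M}\ \ge\ \Big(\inf_{0\le s\le t}\big|Y_{s\to t}\big|^2\Big)\,\Xi_M,\qquad \Xi_M:=t\!\int_{\{z\ge M\}}\!\underline c(z)\,\nu(dz)\ +\ \sum_{k=1}^{M-1}\sum_{i\,:\,T_i^k\le t}\xi_i^k\,\underline c(Z_i^k) .
\]

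Second, I would control the flow. From Hypothesis 2.2 (with $\breve c=\bar c$) one gets $|1+\partial_x\widetilde c(s,z,x)|\ge(1+\bar c(z))^{-1}$; combining this with the uniform bound $|b_M|\le\bar c_1$ on the drift and with the $W_\nu$–integral on $\{z\ge M\}$, the Burkholder‑type estimate (\ref{Burk}) gives $\mathbb E\big[\sup_{0\le s\le t}|Y_{s\to t}|^{-p}\big]\le C_p$ for every $p$, uniformly in $M$ (the constants $\bar c_p<\infty$ are exactly what makes this independent of $M$). By Cauchy–Schwarz it then suffices to prove $\sup_M\mathbb E[\Xi_M^{-q}]<\infty$ for the relevant range of $q$. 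The key point is that $\Xi_M$ depends only on the deterministic quantity $c_M:=t\int_{\{z\ge M\}}\underline c\,d\nu$, which is strictly positive and decreases to $0$ as $M\to\infty$, and on the \emph{effective} jumps (those with $\xi_i^k=1$), which by the thinning property of Poisson measures applied to (\ref{split}) form independent Poisson point measures on $[0,t]\times I_k$ with intensities $m_k\varepsilon_k\psi_k(v)\,dv\,ds$. Thus $c_M$ carries the estimate when $M$ is small (few genuine jumps are retained), while the jump intensities accumulate as $M$ grows.

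Third — the core estimate — I would use $\mathbb E[\Xi_M^{-q}]=\Gamma(q)^{-1}\int_0^\infty u^{q-1}\,\mathbb E[e^{-u\Xi_M}]\,du$, together with the exponential formula for Poisson functionals and the bound $\underline c(v)\ge\underline c(k+1)$ on $I_k$ (using the explicit decreasing minorant of $\underline c$ from Hypothesis 2.4), which gives
\[
\mathbb E[e^{-u\Xi_M}]\ \le\ e^{-u c_M}\,\exp\Big(-m(\psi)\,t\sum_{k=1}^{M-1}m_k\varepsilon_k\big(1-e^{-u\underline c(k+1)}\big)\Big).
\]
Under Hypothesis 2.4(a), where $\underline c(z)\ge e^{-z^{\alpha_2}}$ (so $\log(1/\underline c)$ grows sub‑linearly) and the strong sector condition allows $\varepsilon_k$ of order $(k+1)^{\alpha_1-1}$ with $\sum_k m_k\varepsilon_k\asymp\int_1^\infty z^{\alpha_1-1}\nu(dz)$, one checks that the exponent grows faster than any multiple of $\log u$ as $u\to\infty$, uniformly in $M$ — this is where the third clause of Hypothesis 2.4(a) is used, to keep the residual Gaussian coercivity $c_M$ comparable to the accumulated jump intensity $\sum_{k<M}m_k\varepsilon_k$ — so $\sup_M\mathbb E[\Xi_M^{-q}]<\infty$ for every $q$ and every $t\in[0,T]$. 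Under Hypothesis 2.4(b), where $\underline c(z)\ge z^{-\alpha}$ (so $\log(1/\underline c)$ is only logarithmic) and $\varepsilon_k$ is of order $(k+1)^{-1}$, the same computation yields only polynomial decay $\mathbb E[e^{-u\Xi_M}]\lesssim u^{-\gamma t}$ with $\gamma$ of order $\varepsilon_\ast/(4\alpha)$, hence $\mathbb E[\Xi_M^{-q}]$ is finite uniformly in $M$ only when $\gamma t>q$, which is the origin of the threshold $t>4p\alpha/\varepsilon_\ast$.

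The main obstacle is precisely this uniformity in $M$: for small $M$ the jump part of $\sigma_{\widehat X_t^M}$ is nearly degenerate and the estimate must lean on the residual Gaussian term $c_M$, whereas for large $M$ one has $c_M\to0$ and the argument must instead be carried by the accumulation of effective jumps; the delicate step is to verify that these two regimes overlap, i.e. that $\log(1/c_M)$ is always dominated by a constant times the accumulated effective intensity $\sum_{k<M}m_k\varepsilon_k$ corrected by the minorant of $\underline c$. This compatibility is exactly what the three clauses of Hypothesis 2.4(a) (respectively 2.4(b)) are tailored to guarantee, and in the weak‑sector case it is only achievable at the cost of taking $t$ large, which produces the stated time restriction.
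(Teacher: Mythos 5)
Your strategy is the one the paper follows: lower bound $\sigma_{\widehat X_t^M}$ by negative powers of the tangent flow times the coercivity functional $\Xi_M=\rho_t^M+t\alpha^M$ (effective jumps plus Gaussian tail), reduce to negative moments of $\Xi_M$ by Cauchy--Schwarz and the moment bounds on $Y^M,\overline Y^M$, pass to the Laplace transform via $\mathbb{E}[\Xi_M^{-q}]=\Gamma(q)^{-1}\int_0^\infty u^{q-1}\mathbb{E}[e^{-u\Xi_M}]\,du$, compute $\mathbb{E}[e^{-u\rho_t^M}]$ by the exponential formula for the thinned Poisson measure of effective jumps, and conclude from the growth of $m(\underline c\ge 1/u)$ (Lemma 5.6). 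Up to that point the proposal is sound.

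The genuine gap is precisely the step you flag as ``the main obstacle'' and then leave unresolved: the gluing of the two regimes. The comparison you propose --- that $\log(1/c_M)$ be dominated by the accumulated effective intensity $\sum_{k<M}m_k\varepsilon_k$ --- is not available from the hypotheses, which give no quantitative lower bound on $c_M=t\int_{\{z\ge M\}}\underline c\,d\nu$ in terms of $M$; so a case analysis in $M$ versus $u$ cannot be closed this way. The paper's resolution is a short device your sketch is missing: since $\mathbbm{1}_{I_k}\nu(dz)\ge m_k\varepsilon_k\mathbbm{1}_{I_k}dz$ and $1-e^{-x}\le x$, one has
\begin{equation*}
t\int_M^\infty\bigl(1-e^{-u\underline c(v)}\bigr)\,m(dv)\ \le\ u\,t\int_{\{z\ge M\}}\underline c\,d\nu\ =\ u\,c_M
\end{equation*}
(with $m$ the measure built from $\varepsilon_k\mathbbm{1}_{(k+\frac14,k+\frac34)}dv$; in the paper this is phrased as Jensen's inequality applied to the phantom jump functional $\bar\rho_t^M$ collecting the indices $k\ge M$, whose mean is dominated by $t\alpha^M$). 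Hence $e^{-u c_M}\le\exp(-t\int_M^\infty(1-e^{-u\underline c(v)})m(dv))$, and multiplying by the jump factor gives the single $M$-independent bound $\mathbb{E}[e^{-u\Xi_M}]\le\exp(-t\int_1^\infty(1-e^{-u\underline c(v)})m(dv))$, after which only the growth of $m(\underline c\ge 1/u)$ matters. Two smaller corrections: the third clause of Hypothesis 2.4(a) (integrability of $\bar c$ against $z^{\alpha_0-1}dz$) plays no role in the non-degeneracy --- it is used for the Sobolev-norm estimates of Lemma 4.1; what makes part (a) hold for all $t>0$ is solely $\alpha_1>\alpha_2$, which forces $m(\underline c\ge 1/u)\gtrsim(\ln u)^{\alpha/\alpha_2}$ to grow super-logarithmically. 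And in part (b) you should track the doubling $q=2p$ coming from Cauchy--Schwarz together with the factor $\tfrac12$ from $m(I_k)\ge\varepsilon_k/2$; these two factors of $2$ are exactly what produce the constant $4$ in the threshold $t>4p\alpha/\varepsilon_\ast$.
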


The proofs of these lemmas are rather technical and are postponed for the Appendix (Section 5.1 and 5.2).

\bigskip 

\subsection{Proof of the main result (Theorem 2.2)}

\begin{proof}
$(a)$ By \textbf{Lemma 4.1} and \textbf{Lemma 4.2}\ $a)$, we know that for any $\delta>0$, for any $p,p_1\geq1, 2\leq q\leq q^\ast$, with $q^\ast\geq \frac{3}{\delta}+1$, there exists a constant $C_{q,p,p_1}(T)$ such that for any $M\geq1$, $t\in[0,T]$, we have
\[
\Sigma_{p_1}(\widehat{X}_t^M)+\Vert \widehat{X}_t^M\Vert_{L,q,p}\leq C_{q,p,p_1}(T).
\]
By \textbf{Lemma 2.1}, we know that
\[
d_3(\widehat{X}_t,\widehat{X}_t^M)=d_3(X_t,X_t^\varepsilon)\leq C\eta_3(\varepsilon).
\]
Then applying \textbf{Corollary 3.4.1 }$i)$ for $r=3$, we have
\[
d_{TV}(X_t,X_t^\varepsilon)=d_{TV}(\widehat{X}_t,\widehat{X}_t^M)\leq C d_3(\widehat{X}_t,\widehat{X}_t^M)^{1-\delta }\leq
C_\delta{\eta _{3}}(\varepsilon )^{1-\delta }.
\]
So we obtain (\ref{4.000}). The proof of (\ref{4.00000}) is obtained by \textbf{Corollary 3.4.1 }$ii)$, since $q^\ast\geq \frac{3+l}{\delta}+1$.\\
$(b)$ The proof is almost the same. If
$t>\frac{8\alpha( \frac{3}{\delta}-1)}{\varepsilon_{\ast}},$ then by \textbf{Lemma 4.2}\ $b)$, $\Sigma_{p_1}(\widehat{X}_t^M)<\infty$ for ${p_1}= 2( \frac{3}{\delta}-1)$. So \textbf{Corollary 3.4.1 }$i)$ still holds, and we can obtain (\ref{4.b1}). For 
\[
t>\max\{\frac{8\alpha}{\varepsilon_\ast}( \frac{3+l}{\delta}-1),\frac{8\alpha(3l+2)}{\varepsilon_\ast}\},
\] by \textbf{Lemma 4.2}\ $b)$, $\Sigma_{p_2}(\widehat{X}_t^M)<\infty$ for $ {p_2}= \max\{2( \frac{3+l}{\delta}-1),6l+4\}$. So \textbf{Corollary 3.4.1 }$ii)$ still holds, and we obtain (\ref{4.b2}).
\end{proof}

\bigskip

\section{Appendix}
\subsection{Proof of Lemma 4.1}

In the following, we will only work with the measure $\nu$ supported on $[1,\infty)$ and with the processes $(\widehat{X}_t)_{t\in[0,T]}$ and $(\widehat{X}_t^M)_{t\in[0,T]}$. So in order to simplify the notation, from now on we denote $\widehat{X}_t=X_t$ and $\widehat{X}_t^M=X_t^M$. We remark that $M=\frac{1}{\varepsilon}$ is generally not an integer, but for simplicity, we assume in the following that $M$ is an integer.

Here is the idea of the proof. Since $X_t^M$ is not a simple functional, we construct first the Euler scheme $(X_{t}^{n,M})_{t\in[0,T]}$ in subsection 5.1.1 and check that $X_t^{n,M}\rightarrow X_t^M$ in $L^1$ when $n\rightarrow\infty$. We will prove that $\mathbb{E}\vert X_t^{n,M}\vert_l^p$ and $\mathbb{E}\vert LX_t^{n,M}\vert_l^p$ are bounded (uniformly in $n,M$) in subsection 5.1.3. Then based on \textbf{Lemma 3.2}, we obtain that $X^M_t\in\mathcal{D}_\infty$ and the norms $\Vert X_t^{M}\Vert_{L,l,p}$ are bounded (uniformly in $M$).

\subsubsection{Construction of the Euler scheme}
We take a time-partition $\mathcal{P}_{t}^{n}=\{r_j=\frac{jt}{n}, j=0,\cdots,n\}$ and a space-partition $\Tilde{\mathcal{P}}_{M}^{n}=\{z_j=M+\frac{j}{n}, j=0,1,\cdots\}$. We denote $\tau_{n}(r)=r_j$ when $r\in[r_j,r_{j+1})$, and denote $\gamma_{n}(z)=z_j$ when $z\in[z_j,z_{j+1})$. Let
\begin{eqnarray}
X_{t}^{n,M} &=&x+\int_{0}^{t}\int_{[1,M)}\widetilde{c}(\tau_{n}(r),z,X_{\tau_{n}(r)-}^{n,M })N_{\nu
}(dr,dz)   \nonumber\\
&&+\int_{0}^{t}b_{M }(\tau_{n}(r),X_{\tau_{n}(r)}^{n,M
})dr+\int_{0}^{t}\int_{\{z \geq
M \}}\widetilde{c}(\tau_{n}(r),\gamma_{n}(z),X_{\tau_{n}(r)}^{n,M })W_\nu(dr,dz).  \label{4.1}
\end{eqnarray}%

Then we can obtain the following lemma.
\begin{lemma}
Assume that the \textbf{Hypothesis 2.1} holds true with $q^\ast\geq1$. Then for any $p\geq1,M\geq1$, we have $\mathbb{E}\vert X_{t}^{n,M}-X_{t}^{M}\vert^p\rightarrow0$ as $n\rightarrow\infty$.
\end{lemma}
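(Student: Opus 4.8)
Proof proposal for Lemma 5.2 (convergence of the Euler scheme $X_t^{n,M}\to X_t^M$ in $L^p$).

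The plan is to estimate the difference $X_t^{n,M}-X_t^M$ directly by subtracting the two integral equations \eqref{4.1} and \eqref{3.02}, splitting the resulting error into a ``discretization part'' (where the integrands are evaluated at the frozen times $\tau_n(r)$ and frozen jump sizes $\gamma_n(z)$ but the solution is the \emph{exact} one) and a ``propagation part'' (where one compares $X^{n,M}_{\tau_n(r)}$ with $X^M_{\tau_n(r)}$ or with $X^M_r$). First I would write, for the three terms (the $N_\nu$-integral on $[1,M)$, the drift $b_M$, and the $W_\nu$-integral on $\{z\ge M\}$),
\[
X_t^{n,M}-X_t^M=A_t^{(1)}+A_t^{(2)}+A_t^{(3)},
\]
where each $A^{(i)}$ is further decomposed as $A^{(i)}=B^{(i)}+C^{(i)}$ with $B^{(i)}$ collecting the terms $\widetilde c(\tau_n(r),\cdot,X^{n,M}_{\tau_n(r)-})-\widetilde c(\tau_n(r),\cdot,X^M_{\tau_n(r)-})$ (difference of solutions, same frozen arguments) and $C^{(i)}$ collecting $\widetilde c(\tau_n(r),\cdot,X^M_{\tau_n(r)-})-\widetilde c(r,\cdot,X^M_{r-})$ (genuine discretization error, same exact solution), with the obvious analogue for $\gamma_n(z)$ in the $W_\nu$-term.

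Next I would estimate the moments. For the $N_\nu$-integral I would apply the Burkholder-type bound \eqref{Burk} from the Remark after Hypothesis 2.1: since $\widetilde c$ and its $x$-derivative are dominated by $\bar c(z)$, the increment $\widetilde c(\tau_n(r),z,X^{n,M}_{\tau_n(r)-})-\widetilde c(\tau_n(r),z,X^M_{\tau_n(r)-})$ is bounded by $\bar c(z)\,|X^{n,M}_{\tau_n(r)-}-X^M_{\tau_n(r)-}|$, so \eqref{Burk} gives
\[
\mathbb{E}\,\big|B^{(1)}_t\big|^p\le C\,C_p\int_0^t \mathbb{E}\,\big|X^{n,M}_{\tau_n(r)-}-X^M_{\tau_n(r)-}\big|^p\,dr .
\]
The drift term $B^{(2)}$ is handled by Jensen/Hölder in the same way, and the Gaussian integral $B^{(3)}$ by the Burkholder--Davis--Gundy inequality for stochastic integrals against $W_\nu$, using $\int \bar c(z)^2\nu(dz)=\bar c_2<\infty$. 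For the discretization parts $C^{(i)}$ I would use the continuity of $(s,z)\mapsto\widetilde c(s,z,x)$ together with the domination $\bar c$ and the a priori moment bounds $\sup_n\sup_{r\le T}\mathbb{E}|X^{n,M}_r|^p<\infty$ and $\sup_{r\le T}\mathbb{E}|X^M_r|^p<\infty$ (standard, and uniform in $n$); a dominated-convergence argument shows $\mathbb{E}|C^{(i)}_t|^p\to 0$ as $n\to\infty$, since $\tau_n(r)\to r$ and $\gamma_n(z)\to z$ pointwise and $\widetilde c$ is $\bar c$-Lipschitz in $x$ with bounded $x$-derivative so that the jump $|X^M_{r-}-X^M_{\tau_n(r)-}|$ is itself small in $L^p$ off the (vanishingly many) partition points. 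Write $\varepsilon_n:=\sum_i\mathbb{E}|C^{(i)}_t|^p\to 0$.

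Finally I would combine these bounds into
\[
\mathbb{E}\,\big|X^{n,M}_t-X^M_t\big|^p\le C_p\,\varepsilon_n+C_p\int_0^t \mathbb{E}\,\big|X^{n,M}_{\tau_n(r)-}-X^M_{\tau_n(r)-}\big|^p\,dr,
\]
and conclude by Gronwall's lemma (in the version for the right-continuous function $r\mapsto\sup_{s\le r}\mathbb{E}|X^{n,M}_s-X^M_s|^p$, noting $\mathbb{E}|X^{n,M}_{\tau_n(r)-}-X^M_{\tau_n(r)-}|^p\le \sup_{s\le r}\mathbb{E}|X^{n,M}_s-X^M_s|^p$) that $\mathbb{E}|X^{n,M}_t-X^M_t|^p\le C_p\,\varepsilon_n\,e^{C_pT}\to 0$. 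I expect the main obstacle to be the bookkeeping in the discretization term $C^{(i)}$: one must show that freezing the \emph{solution} at $\tau_n(r)$ inside an integral driven by a measure with infinite intensity near the relevant scale does not spoil the $L^p$-estimate — this is where the uniform (in $n$) a priori moment bounds on $X^{n,M}$ and the integrability $\bar c_p<\infty$ for all $p$ are essential, and where a little care is needed to interchange limits; the rest is a routine Burkholder--Gronwall argument.
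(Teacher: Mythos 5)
Your proposal is correct and follows essentially the same route as the paper: decompose the difference of the two integral equations into the jump, drift and Gaussian parts, separate in each a ``propagation'' term (controlled via the domination by $\bar c$ and the Burkholder-type bound, yielding a Gronwall integrand) from a ``discretization'' term (shown to vanish by dominated convergence using $\bar c_p<\infty$ and the uniform moment bounds), and conclude by Gronwall. The only difference is bookkeeping: you place the time-discretization of the exact solution $|X^M_{\tau_n(r)}-X^M_r|$ inside the vanishing term, whereas the paper keeps $|X^{n,M}_{\tau_n(r)}-X^M_r|$ in the Gronwall integrand and disposes of $|X^{n,M}_{\tau_n(r)}-X^{n,M}_r|$ at the end -- both variants are equivalent.
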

\begin{proof}
We first notice that since $\bar{c}(z)$ (in \textbf{Hypothesis 2.1}) is decreasing, $\sup\limits_{n\in\mathbb{N}}\bar{c}(\gamma_{n}(z))\leq \bar{c}(\gamma_{1}(z)).$ So
\begin{equation}\label{euler_hyp}
	\int_{1}^\infty\sup\limits_{n\in\mathbb{N}}\vert \bar{c}(\gamma_{n}(z))\vert^2\nu(dz)\leq\int_{1}^\infty\vert \bar{c}(\gamma_{1}(z))\vert^2\nu(dz)\leq \vert\bar{c}(1)\vert^2\nu[1,2]+\int_{1}^\infty\vert \bar{c}(z)\vert^2\nu(dz)\leq C<\infty.
\end{equation}

Then by the Lebesgue dominated convergence theorem, \eqref{euler_hyp} implies that 
\begin{eqnarray}
	\lim\limits_{n\rightarrow\infty}\sup\limits_{x\in\mathbb{R}}\int_0^T\int_{[1,\infty)}\vert \widetilde{c}(s,z,x)-\widetilde{c}(\tau_{n}(s),\gamma_{n}(z),x)\vert^2\nu(dz)ds=0, 	\label{complement}
\end{eqnarray}
and
\begin{eqnarray}
	\sup\limits_{n\in\mathbb{N}}\sup\limits_{x\in\mathbb{R}}\int_0^T\int_{[1,\infty)}\vert \widetilde{c}(\tau_{n}(s),\gamma_{n}(z),x)\vert^2\nu(dz)ds\leq C. \label{complement*}
\end{eqnarray}

In the following proof, $C_p(T)$ will be a constant depending on $p$ and $T$ which may be changed from line to line.
For $p\geq2$, we write
$\mathbb{E}\vert X_{t}^{n,M}-X_{t}^{M}\vert^p\leq C_p(T)(E_1+E_2+E_3)$, where 
\begin{eqnarray*}
E_1=\mathbb{E}\vert\int_0^t\int_{[1,M)}\widetilde{c}(\tau_n(r),z,X_{\tau_{n}(r)-}^{n,M })-\widetilde{c}(r,z,X_{r-}^{M })N_\nu(dr,dz)\vert^p,
\end{eqnarray*}
\begin{eqnarray*}
E_2=\mathbb{E}\vert\int_{0}^{t} b_{M }(\tau_{n}(r),X_{\tau_{n}(r)}^{n,M
})-b_{M}(r,X_{r}^{M
})dr\vert^p,
\end{eqnarray*}
\begin{eqnarray*}
E_3=\mathbb{E}\vert\int_{0}^{t}\int_{\{z \geq
M \}}\widetilde{c}(\tau_{n}(r),\gamma_{n}(z),X_{\tau_{n}(r)}^{n,M})-\widetilde{c}(r,z,X_{r}^{M })W_\nu(dr,dz)\vert^p.
\end{eqnarray*}
Then, compensating $N_\nu$ and using Burkholder's inequality (see for example the Theorem 2.11 in $\cite{ref20}$),
\begin{eqnarray*}
E_1&\leq& C_p(T)[\mathbb{E}\int_0^t(\int_{[1,M)}\vert \widetilde{c}(\tau_n(r),z,X_{\tau_{n}(r)-}^{n,M })-\widetilde{c}(r,z,X_{r-}^{M })\vert^2\nu(dz))^{\frac{p}{2}}dr\\
&+&\mathbb{E}\int_0^t\int_{[1,M)}\vert \widetilde{c}(\tau_n(r),z,X_{\tau_{n}(r)-}^{n,M })-\widetilde{c}(r,z,X_{r-}^{M })\vert^p\nu(dz)dr\\
&+&\mathbb{E}\int_0^t\vert\int_{[1,M)} \widetilde{c}(\tau_n(r),z,X_{\tau_{n}(r)-}^{n,M })-\widetilde{c}(r,z,X_{r-}^{M })\nu(dz)\vert^pdr]\\
&\leq& C_p(T)[R_n^1+((\bar{c}_2)^{\frac{p}{2}}+\bar{c}_p+(\bar{c}_1)^p)\int_0^t\mathbb{E}\vert X_{\tau_{n}(r)}^{n,M }-X_{r}^{M }\vert^pdr],
\end{eqnarray*}
with
\begin{eqnarray*}
R_n^1&=&\mathbb{E}\int_0^t(\int_{[1,M)}\vert \widetilde{c}(\tau_n(r),z,X_{r-}^{M })-\widetilde{c}(r,z,X_{r-}^{M })\vert^2\nu(dz))^{\frac{p}{2}}dr\\
&+&\mathbb{E}\int_0^t\int_{[1,M)}\vert \widetilde{c}(\tau_n(r),z,X_{r-}^{M })-\widetilde{c}(r,z,X_{r-}^{M })\vert^p\nu(dz)dr\\
&+&\mathbb{E}\int_0^t\vert\int_{[1,M)} \widetilde{c}(\tau_n(r),z,X_{r-}^{M })-\widetilde{c}(r,z,X_{r-}^{M })\nu(dz)\vert^pdr.
\end{eqnarray*}
Since $\vert \widetilde{c}(\tau_n(r),z,X_{r-}^{M })-\widetilde{c}(r,z,X_{r-}^{M })\vert^p\leq\vert2\bar{c}(z)\vert^p\in L^1(\Omega\times[1,\infty)\times[0,T],\mathbb{P}\times\nu\times Leb)$, we apply the Lebesgue's dominated convergence theorem and we obtain that $R_n^1\rightarrow0$. Next,
\begin{eqnarray*}
E_2&\leq& {C}_p(T)\mathbb{E}\int_0^t\vert\int_{\{z\geq M \}}\widetilde{c}(\tau_n(r),z,X_{\tau_{n}(r)}^{n,M })-\widetilde{c}(r,z,X_{r}^{M })\nu(dz)\vert^pdr\\
&\leq& {C}_p(T)[R_n^2+(\bar{c}_1)^p\int_0^t\mathbb{E}\vert X_{\tau_{n}(r)}^{n,M }-X_{r}^{M }\vert^pdr],
\end{eqnarray*}
with 
\[
R_n^2=\mathbb{E}\int_0^t\vert\int_{\{z\geq M\}} \widetilde{c}(\tau_n(r),z,X_{r}^{M })-\widetilde{c}(r,z,X_{r}^{M })\nu(dz)\vert^pdr\rightarrow0.
\]
Finally, using Burkholder's inequality,
\begin{eqnarray*}
E_3&\leq& {C}_p(T)\mathbb{E}\vert\int_{0}^{t}\int_{\{z \geq
M \}}\vert\widetilde{c}(\tau_{n}(r),\gamma_{n}(z),X_{\tau_{n}(r)}^{n,M})-\widetilde{c}(r,z,X_{r}^{M })\vert^2\nu(dz)\vert^\frac{p}{2}dr\\
&\leq& {C}_p(T)[R_n^3+(\bar{c}_2)^\frac{p}{2}\int_0^t\mathbb{E}\vert X_{\tau_{n}(r)}^{n,M }-X_{r}^{M }\vert^pdr],
\end{eqnarray*}
where (by (\ref{complement})),
\begin{eqnarray*}
R_n^3=\mathbb{E}\vert\int_{0}^{t}\int_{\{z \geq
M \}}\vert\widetilde{c}(\tau_{n}(r),\gamma_{n}(z),X_{r}^{M})-\widetilde{c}(r,z,X_{r}^{M })\vert^2\nu(dz)\vert^\frac{p}{2}dr\rightarrow0.
\end{eqnarray*}

Therefore, $\mathbb{E}\vert X_{t}^{n,M}-X_{t}^{M}\vert^p\leq{C_p}(T)[R_{n}+\int_0^t\mathbb{E}\vert X_{\tau_{n}(r)}^{n,M }-X_{r}^{M }\vert^pdr]$, with $R_{n}=R_n^1+R_n^2+R_n^3\rightarrow0$ as $n\rightarrow\infty$. One can easily check that $\mathbb{E}\vert X_{t}^{n,M}-X_{\tau_{n}(t)}^{n,M}\vert^p\rightarrow0$. Also there exists a constant $C_p(T)$ depending on $p$ and $T$ such that for any $n,M\in\mathbb{N}$ and any $t\in[0,T]$, $\mathbb{E}\vert X_{t}^{n,M}\vert^p\leq C_p(T)$ (see (\ref{extra}) for details). Then, by the dominated convergence theorem, these yield  $\int_0^t\mathbb{E}\vert X_{r}^{n,M}-X_{\tau_{n}(r)}^{n,M}\vert^pdr\rightarrow0$. So we have $\mathbb{E}\vert X_{t}^{n,M}-X_{t}^{M}\vert^p\leq{C_p}(T)[\widetilde{R_{n}}+\int_0^t\mathbb{E}\vert X_{r}^{n,M }-X_{r}^{M }\vert^pdr]$, with $\widetilde{R_{n}}\rightarrow0$ as $n\rightarrow\infty$. We conclude by using Gronwall's lemma. 
\end{proof}

\begin{remark} 
Some results on the convergence of the Euler scheme of a jump-diffusion can be found for example in $\cite{ref38},\cite{ref30}$. The special thing in our paper is that we deal with the space-time Brownian motion instead of the classical Brownian motion, and this is why we need to assume (\ref{complement}).
\end{remark}

Now we represent the jump's part of $(X_t^{n,M})_{t\in[0,T]}$ by means of compound Poisson processes. We recall that for each $k\in\mathbb{N}$, we denote by $T_i^k,i\in\mathbb{N}$ the jump times of a Poisson process $(J_t^k)_{t\in[0,T]}$ of parameter $m_k$, and we consider a sequence of independent random variables $Z_i^k\sim\mathbbm{1}_{I_k}(z)\frac{\nu(dz)}{m_k}, i\in\mathbb{N}$, which are independent of $J^k$ as well. Then we write
\begin{eqnarray}
X_{t}^{n,M } &=&x+\int_{0}^{t}\sum_{k=1}^{M-1}\int_{\{z \in
I_k\}}\widetilde{c}(\tau_{n}(r),z,X_{\tau_{n}(r)-}^{n.M})N_{\nu
}(dr,dz)   \nonumber\\
&&+\int_{0}^{t}b_{M}(\tau_{n}(r),X_{\tau_{n}(r)}^{n,M})dr+\int_{0}^{t}\int_{\{z\geq M\}}\widetilde{c}(\tau_{n}(r),\gamma_{n}(z),X_{\tau_{n}(r)}^{n,M})W_\nu(dr,dz)   \nonumber\\
&=&x+\sum_{k=1}^{M-1}\sum_{i=1}^{J_t^k}\widetilde{c}(\tau_{n}(T_i^k),Z_i^k,X_{\tau_{n}(T_i^k)-}^{n,M})   \nonumber\\
&&+\int_{0}^{t}b_{M}(\tau_{n}(r),X_{\tau_{n}(r)}^{n,M})dr+\int_{0}^{t}\int_{\{z\geq M\}}\widetilde{c}(\tau_{n}(r),\gamma_{n}(z),X_{\tau_{n}(r)}^{n,M})W_\nu(dr,dz)  . \label{3.03}
\end{eqnarray}%
So for every $t\in[0,T]$, $X_{t}^{n,M }$ is a simple functional.

\subsubsection{Preliminary estimates}
In order to estimate the Sobolev norms of the Euler scheme, we need the following preliminary lemmas.

\begin{lemma}
We fix $M\geq1$. Let $y:\Omega\times[0,T]\times[M,\infty)\rightarrow \mathbb{R}$ be a function which is piecewise constant with respect to both $t$ and $z$. We assume that $y_t(z)$ is progressively measurable with respect to $\mathcal{F}_t$ (defined in (\ref{Ft})), $y_t(z)\in\mathcal{S}$, and $\mathbb{E}(\int_{0}^{t}\int_{\{z\geq M \}}\left\vert
y_r(z)\right\vert ^{2}\nu (dz)dr)<\infty$. We denote $I_t({y})=\int_0^t\int_{\{z\geq M\}}{y}_r(z)W_\nu(dr,dz)$. Then for any $l\geq1,p\geq2$, there exists a constant $C_{l,p}(T)$ such that 
\[
a)\quad\mathbb{E}\vert I_t(y)\vert_{l}^p\leq C_{l,p}(T)\mathbb{E}\int_0^t(\int_{\{z\geq M\}}\vert y_r(z)\vert_{l}^2\nu(dz))^{\frac{p}{2}}dr,
\]
\begin{eqnarray*}
b)\quad\mathbb{E}\vert LI_t(y)\vert_l^p\leq C_{l,p}(T)[\mathbb{E}\int_0^t(\int_{\{z\geq M\}}\vert Ly_r(z)\vert_{l}^2\nu(dz))^{\frac{p}{2}}dr+\mathbb{E}\int_0^t(\int_{\{z\geq M\}}\vert y_r(z)\vert_{l}^2\nu(dz))^{\frac{p}{2}}dr].
\end{eqnarray*}
\end{lemma}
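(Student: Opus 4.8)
The plan is to represent $I_t(y)$ explicitly using the piecewise-constant structure, to establish commutation rules between the operators $D,L$ and the Walsh integral $I_t(\cdot)$, and then to combine Burkholder's inequality (as recalled after Hypothesis 2.1) with Hölder's inequality. Writing $y_r(z)=\sum_{j,k}y_{j,k}\mathbbm 1_{[r_j,r_{j+1})}(r)\mathbbm 1_{B_k}(z)$ with $y_{j,k}\in\mathcal S$ being $\mathcal F_{r_j}$-measurable (and, by truncating in $z$ and taking an $L^2$-limit, we may assume the sum finite), one has $I_t(y)=\sum_{j,k}y_{j,k}\,W_\nu([r_j,r_{j+1})\cap[0,t]\times B_k)$. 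Since the Gaussian increments of $W_\nu$ are independent of the jump amplitudes, $D^Z$ commutes with the integral, $D^Z_{(k,i)}I_t(y)=I_t(D^Z_{(k,i)}y)$, whereas, using $D^W_{(s,z)}W_\nu([r_j,r_{j+1})\cap[0,t]\times B_k)=\mathbbm 1_{[r_j,r_{j+1})\cap[0,t]}(s)\mathbbm 1_{B_k}(z)$ and the $\mathcal F_{r_j}$-measurability of $y_{j,k}$ (which kills the diagonal contribution $s\in[r_j,r_{j+1})$), one gets the divergence-type identity
\[
D^W_{(s,z)}I_t(y)=\mathbbm 1_{[0,t]}(s)\mathbbm 1_{\{z\ge M\}}\,y_s(z)+I_t(D^W_{(s,z)}y).
\]
Iterating these two rules, $D^qI_t(y)=I_t(D^qy)+\sum B^{(q)}$, a sum of at most $q$ ``boundary terms'', each $B^{(q)}$ consisting of an indicator $\mathbbm 1_{[0,t]}(s_\ell)\mathbbm 1_{\{z_\ell\ge M\}}$ in one $L^2(\nu\times Leb)$-slot and a component of $D^{q-1}y$ evaluated at $(s_\ell,z_\ell)$ in the remaining $q-1$ slots.

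For part $a)$ I would estimate each of the finitely many pieces for $0\le q\le l$. For the martingale term, Burkholder's inequality for the continuous $\mathcal H^{\otimes q}$-valued martingale $s\mapsto\int_0^s\int_{\{z\ge M\}}D^qy_r(z)W_\nu(dr,dz)$ gives $\mathbb E|I_t(D^qy)|_{\mathcal H^{\otimes q}}^p\le C_{q,p}\,\mathbb E\big(\int_0^t\int_{\{z\ge M\}}|D^qy_r(z)|_{\mathcal H^{\otimes q}}^2\nu(dz)dr\big)^{p/2}$, and $|D^qy_r(z)|_{\mathcal H^{\otimes q}}\le|y_r(z)|_l$ for $q\le l$. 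For a boundary term, its $\mathcal H^{\otimes q}$-norm equals $\big(\int_0^t\int_{\{z\ge M\}}|D^{q-1}y_s(z)|_{\mathcal H^{\otimes(q-1)}}^2\nu(dz)ds\big)^{1/2}$, again bounded by $\big(\int_0^t\int_{\{z\ge M\}}|y_s(z)|_l^2\nu(dz)ds\big)^{1/2}$ since $q-1\le l$. Taking $p$-th moments and then using Hölder's inequality $(\int_0^t g)^{p/2}\le T^{p/2-1}\int_0^t g^{p/2}$ (valid for $p\ge 2$) to pull $\int_0^t$ out of the power, every piece is bounded by $C_{l,p}(T)\,\mathbb E\int_0^t\big(\int_{\{z\ge M\}}|y_r(z)|_l^2\nu(dz)\big)^{p/2}dr$; summing over $q$ and over the boundary terms yields $a)$.

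For part $b)$ I would first establish $LI_t(y)=I_t(Ly)-I_t(y)$. The identity $L^ZI_t(y)=I_t(L^Zy)$ follows by passing the sum over $(k,i)$ and the piecewise-constant adapted factors $\xi^k_i\theta_k(V^k_i)$ through $I_t(\cdot)$ via the commutation of $D^Z$ with $I_t$; the identity $L^WI_t(y)=I_t(L^Wy)-I_t(y)$ is the standard Ornstein--Uhlenbeck relation $L\!\int u\,dW=\int Lu\,dW-\int u\,dW$ applied to the adapted integrand $u=y\,\mathbbm 1_{[0,t]}\mathbbm 1_{\{z\ge M\}}$. Granting this, $|LI_t(y)|_l\le|I_t(Ly)|_l+|I_t(y)|_l$, and applying part $a)$ once with integrand $Ly$ (which is again piecewise constant, adapted and $\mathcal S$-valued, since $L:\mathcal S\to\mathcal S$ preserves these properties) and once with integrand $y$ produces exactly the two terms on the right-hand side of $b)$.

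I expect the main obstacle to be the rigorous justification of the commutation identities, especially $D^W_{(s,z)}I_t(y)=\mathbbm 1_{[0,t]}(s)\mathbbm 1_{\{z\ge M\}}y_s(z)+I_t(D^W_{(s,z)}y)$ and $L^WI_t(y)=I_t(L^Wy)-I_t(y)$, since they mix the $l_2$ and $L^2(\nu\times Leb)$ components of $\mathcal H$ and rely on the adaptedness of $y$ (so that the diagonal terms vanish) and on the independence of $W_\nu$ from the jump noise; once these are in place, the Burkholder/Hölder estimates are routine.
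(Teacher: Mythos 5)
Your proof of part $a)$ follows essentially the same route as the paper's: establish the commutation rule $D^Z_{(k,i)}I_t(y)=I_t(D^Z_{(k,i)}y)$ and the divergence-type identity for $D^W$ (with the boundary term $\mathbbm 1_{[0,t]}(s)y_s(z)$), iterate, and then bound each term by Burkholder's inequality for Hilbert-space-valued martingales plus Hölder in time. Nothing substantive to add there.

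For part $b)$ your strategy is genuinely different from the paper's. You compute $L^Z I_t(y)$ and $L^W I_t(y)$ directly, using the product structure of $I_t(y)$ as a finite sum $\sum y_{j,k}W_{j,k}$ and the independence of $W_{j,k}$ from the $\mathcal F_{r_j}$-measurable factor $y_{j,k}$. The paper instead proves the identity $LI_t(y)=I_t(Ly)+I_t(y)$ by a chaos-expansion and duality argument: it checks $LI_k^t(f_k)=kI_k^t(f_k)$ for deterministic multiple Wiener integrals, then verifies $\mathbb E[F\,LI_t(y)]=\mathbb E[F(I_t(Ly)+I_t(y))]$ first for $F$ a multiple integral (hence for all $F\in L^2(W)$), and separately for $G\in L^2(N)$ using (\ref{LWN}). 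Your route is shorter and more transparent, but it implicitly relies on a product rule for $L$ ($L(FG)=F\,LG+G\,LF-2\langle DF,DG\rangle_{\mathcal H}$) which is not stated in the paper's abstract framework, whereas the paper's argument stays entirely within the duality formula that is an axiom of the IbP framework.

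More importantly, you have the sign wrong: with the paper's convention (where $L^WW_\nu(\varphi)=W_\nu(\varphi)$, i.e.\ $L$ is $\delta D$ and $LI_k=kI_k$, not the negative semi-definite generator), the correct identity is
\[
LI_t(y)=I_t(Ly)+I_t(y),
\]
as proved in the paper (its equation (\ref{ap2})). Your claim $L^WI_t(y)=I_t(L^Wy)-I_t(y)$ invokes the ``standard OU relation'' for the opposite sign convention $L=-\delta D$. One can check the correct sign directly on the building block: for $Y\in\mathcal F_a$ and $W$ a Gaussian increment over $[a,b)\times B$ independent of $Y$, one gets $L(YW)=W\,LY+YW$ because $LW=W$ and $\langle DY,DW\rangle_{\mathcal H}=0$. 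This error is harmless for the final estimate since both signs give $|LI_t(y)|_l\le|I_t(Ly)|_l+|I_t(y)|_l$, but you should verify the convention before quoting ``the standard relation''.
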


\begin{proof}
Proof of $a)$: Let $C_{l,p}(T)$ be a constant depending on $l,p$ and $T$ which may change from one line to another. For any $l\geq1$, we take $l_W\geq0$ and $l_Z\geq0$ such that $0<l_W+l_Z\leq l$.\\
It is easy to check that 
\[
D^{Z,l_Z}_{(k_1,i_1)\cdots(k_{l_Z},i_{l_Z})}I_t(y)=\int_0^t\int_{\{z\geq M\}}D^{Z,{l_Z}}_{(k_1,i_1)\cdots(k_{l_Z},i_{l_Z})} y_r(z)W_\nu(dr,dz).
\]
And by recurrence, one can show that
\[
D^{W,{l_W}}_{(s_1,z_1)\cdots(s_{l_W},z_{l_W})}I_t(y)=\int_0^t\int_{\{z\geq M\}}D^{W,{l_W}}_{(s_1,z_1)\cdots(s_{l_W},z_{l_W})} y_r(z)W_\nu(dr,dz)+\sum_{j=1}^{l_W}D^{W,{{l_W}-1}}_{\widehat{(s_j,z_j)}^{{l_W}-1}}y_{s_j}(z_j)\mathbbm{1}_{s_j\leq t},
\]
with 
\[
\widehat{(s_j,z_j)}^{{l_W}-1}:=(s_1,z_1)\cdots(s_{j-1},z_{j-1})(s_{j+1},z_{j+1})\cdots(s_{l_W},z_{l_W}).
\]
We denote 
\[
\bar{y}_r(z)(k_1,i_1,\cdots,k_{l_Z},i_{l_Z}):=D^{Z,{l_Z}}_{(k_1,i_1)\cdots(k_{l_Z},i_{l_Z})} y_r(z),\quad \bar{y}_r^{l_Z}(z):=D^{Z,{l_Z}}y_r(z)\in l_2^{\otimes l_Z}.
\]
Then $D^{Z,{l_Z}}I_t(y)=I_t(\bar{y}^{l_Z})$, and
\begin{eqnarray*}
D^{W,l_W}_{(s_1,z_1)\cdots(s_{l_W},z_{l_W})}D^{Z,l_Z}_{(k_1,i_1)\cdots(k_{l_Z},i_{l_Z})}I_t(y)&=&\int_0^t\int_{\{z\geq M\}}D^{W,l_W}_{(s_1,z_1)\cdots(s_{l_W},z_{l_W})} \bar{y}_r(z)(k_1,i_1,\cdots,k_{l_Z},i_{l_Z})W_\nu(dr,dz)\\
&+&\sum_{j=1}^{l_W}D^{W,{l_W-1}}_{\widehat{(s_j,z_j)}^{l_W-1}}\bar{y}_{s_j}(z_j)(k_1,i_1,\cdots,k_{l_Z},i_{l_Z})\mathbbm{1}_{s_j\leq t}.
\end{eqnarray*}
Let
${H}_{{l_Z},{l_W},T}=l_2^{\otimes {l_Z}}\otimes L^2([0,T]\times[M,\infty),Leb\times\nu)^{\otimes {l_W}}.$
We have
\begin{eqnarray*}
&&\vert D^{W,{l_W}}D^{Z,{l_Z}}I_t(y)\vert_{H_{l,\bar{l},T}}^2=\int_{[0,T]^{l_W}}\int_{[M,\infty)^{l_W}}\vert D^{W,{l_W}}_{(s_1,z_1)\cdots(s_{l_W},z_{l_W})} I_t(\bar{y}^{l_Z})\vert_{l_2^{\otimes {l_Z}}}^2\nu(dz_1)ds_1\cdots\nu(dz_{l_W})ds_{l_W}\\
&&\leq 2\vert\int_0^t\int_{\{z\geq M\}}D^{W,{l_W}} \bar{y}^{l_Z}_r(z)W_\nu(dr,dz)\vert_{H_{{l_Z},{l_W},T}}^2 + {l_W}2^{l_W}\int_0^t\int_{\{z\geq M\}}\vert D^{W,{{l_W}-1}} \bar{y}^{l_Z}_r(z)\vert_{H_{{l_Z},{l_W}-1,T}}^2\nu(dz)dr.
\end{eqnarray*}
Using Burkholder's inequality for Hilbert-space-valued martingales (see \cite{ref12} for example), we have
\begin{eqnarray}
\mathbb{E}\vert D^{W,{l_W}}D^{Z,{l_Z}}I_t(y)\vert_{H_{{l_Z},{l_W},T}}^p&\leq& C_{l,p}(T)[\mathbb{E}\int_0^t(\int_{\{z\geq M\}}\vert D^{W,{l_W}}D^{Z,{l_Z}}y_r(z)\vert_{H_{{l_Z},{l_W},T}}^2\nu(dz))^{\frac{p}{2}}dr  \nonumber\\
&+&\mathbb{E}\int_0^t(\int_{\{z\geq M\}}\vert D^{W,{{l_W}-1}}D^{Z,{l_Z}}y_r(z)\vert_{H_{{l_Z},{l_W}-1,T}}^2\nu(dz))^{\frac{p}{2}}dr].   \label{ap1}
\end{eqnarray}

We recall that for $F\in\mathcal{D}_\infty$, we have $\vert D^{W,{l_W}}D^{Z,{l_Z}}F\vert_{H_{{l_Z},{l_W},T}}\leq \vert F\vert_{{l_Z}+{l_W}}$ (see the definition in (\ref{norm})). Then (\ref{ap1}) gives
\begin{eqnarray}
\mathbb{E}\vert I_t(y)\vert_{1,l}^p\leq C_{l,p}(T)\sum\limits_{{l_Z}+{l_W}\leq l}\mathbb{E}\vert D^{W,{l_W}}D^{Z,{l_Z}}I_t(y)\vert_{H_{{l_Z},{l_W},T}}^p\leq {C}_{l,p}(T)\mathbb{E}\int_0^t(\int_{\{z\geq M\}}\vert y_r(z)\vert_{l}^2\nu(dz))^{\frac{p}{2}}dr. \label{auxD1}
\end{eqnarray}

Finally, using Burkholder's inequality, we have 
\begin{eqnarray}
\mathbb{E}\vert I_t(y)\vert^p\leq {C}_{l,p}(T)\mathbb{E}\int_0^t(\int_{\{z\geq M\}}\vert y_r(z)\vert^2\nu(dz))^{\frac{p}{2}}dr.  \label{auxD2}
\end{eqnarray}
So $a)$ is proved.

\bigskip\bigskip

Proof of $b)$: We first show that 
\begin{eqnarray}
LI_t(y)=I_t(Ly)+I_t(y).  \label{ap2}
\end{eqnarray}
We denote
\[
I_k^t(f_k)=k!\int_0^t\int_0^{s_1}\cdots\int_0^{s_{k-1}}\int_{[M,+\infty)^k}f_k(s_1,\cdots,s_k,z_1,\cdots,z_k)W_\nu(ds_k,dz_k)\cdots W_\nu(ds_1,dz_1)
\]
the multiple stochastic integral for a deterministic function $f_k$, which is square integrable with respect to $(\nu(dz)ds)^{\otimes k}$ and is symmetric with respect to the time variation $(s_1,\cdots,s_k)$ for each fixed $(z_1,\cdots,z_k)$. Notice that $L^ZI_k^t(f_k)=0$ and $L^WI_k^t(f_k)=kI_k^t(f_k)$. So, $LI_k^t(f_k)=kI_k^t(f_k)$.
Then by the duality relation (\ref{0.01}), 
\begin{eqnarray}
\mathbb{E}(I_k^t(f_k)L(I_t(y)))=\mathbb{E}(I_t(y)\times LI_k^t(f_k))=k\mathbb{E}(I_t(y)\times I_k^t(f_k)).  \label{ap3}
\end{eqnarray}
On the other hand, using the isometry property and the duality relation,
\begin{eqnarray*}
&&\mathbb{E}(I_k^t(f_k)\times I_t(Ly))=k\mathbb{E}\int_0^t\int_{\{z\geq M\}}I_{k-1}^r(f_k(r,z,\cdot))Ly_r(z)\nu(dz)dr\\
&&=k\int_0^t\int_{\{z\geq M\}}\mathbb{E}[y_r(z)\times LI_{k-1}^r(f_k(r,z,\cdot))]\nu(dz)dr=k(k-1)\mathbb{E}\int_0^t\int_{\{z\geq M\}}y_r(z)I_{k-1}^r(f_k(r,z,\cdot))\nu(dz)dr\\
&&=k(k-1)\mathbb{E}(I_t(y)\times\int_0^t\int_{\{z\geq M\}}I_{k-1}^r(f_k(r,z,\cdot))W_\nu(dr,dz))\\
&&=(k-1)\mathbb{E}(I_t(y)\times I_{k}^t(f_k)) .
\end{eqnarray*}Combining this with (\ref{ap3}), we get
\begin{eqnarray}
\mathbb{E}(I_k^t(f_k)(I_t(y)+I_t(Ly)))=k\mathbb{E}(I_k^t(f_k)I_t(y))=\mathbb{E}[I_k^t(f_k)\times LI_t(y)]. 
\end{eqnarray}

Since every element in $L^2(W)$ (defined by (\ref{ap})) can be represented as the direct sum of multiple stochastic integrals, we have for any $F\in L^2(W)$,
\begin{eqnarray}
\mathbb{E}[FLI_t(y)] =\mathbb{E}[F(I_t(Ly)+I_t(y))] .  \label{ap5}
\end{eqnarray}

For $G\in L^2(N)$, one has $L^WG=0$ and $L^ZG\in L^2(N)$. Then by using duality and (\ref{LWN}), 
\[
\mathbb{E}[GLI_t(y)]=\mathbb{E}[I_t(y)LG]=\mathbb{E}[I_t(y)L^ZG]=0,
\]and by (\ref{LWN}),
\[
\mathbb{E}[G(I_t(Ly)+I_t(y))]=0.
\]
So, \begin{eqnarray}
\mathbb{E}[GLI_t(y)] =\mathbb{E}[G(I_t(Ly)+I_t(y))] .  \label{ap6}
\end{eqnarray}
Combining (\ref{ap5}) and (\ref{ap6}), for any $\Tilde{G}\in L^2(W)\otimes L^2(N)$, we have
$\mathbb{E}[\Tilde{G}LI_t(y)] =\mathbb{E}[\Tilde{G}(I_t(Ly)+I_t(y))]$,
which proves (\ref{ap2}).\\
Then, by \textbf{Lemma 5.2} $a)$, 
\begin{eqnarray*}
\mathbb{E}\vert LI_t(y)\vert_{l}^p&\leq&2^{p-1}(\mathbb{E}\vert\int_0^t\int_{\{z\geq M\}}Ly_r(z)W_\nu(dr,dz)\vert_l^p+\mathbb{E}\vert\int_0^t\int_{\{z\geq M\}}y_r(z)W_\nu(dr,dz)\vert_l^p)\\
&\leq& C_{l,p}(T)[\mathbb{E}\int_0^t(\int_{\{z\geq M\}}\vert Ly_r(z)\vert_{l}^2\nu(dz))^{\frac{p}{2}}dr+\mathbb{E}\int_0^t(\int_{\{z\geq M\}}\vert y_r(z)\vert_{l}^2\nu(dz))^{\frac{p}{2}}dr].
\end{eqnarray*}
\end{proof}

We will also need the following lemma from $\cite{ref4}$ (Lemma\ 8 and Lemma\ 10), which is a consequence of the chain rule for $D^q$ and $L$.
\begin{lemma}
Let $F\in\mathcal{S}^d$. For every $l\in\mathbb{N},$ if $\phi: \mathbb{R}^d\rightarrow\mathbb{R}$ is a $C^{l}(\mathbb{R}^d)$ function ($l-$times differentiable function), then
\[
a)\quad \vert\phi(F)\vert_{1,l}\leq\vert\nabla\phi(F)\vert\vert F\vert_{1,l}+C_l\sup_{2\leq\vert\beta\vert\leq l}\vert\partial^\beta\phi(F)\vert\vert F\vert_{1,l-1}^{l}.
\]
If $\phi\in C^{l+2}(\mathbb{R}^d)$, then
\[
b)\quad \vert L\phi(F)\vert_{l}\leq\vert\nabla\phi(F)\vert\vert LF\vert_{l}+C_l\sup_{2\leq\vert\beta\vert\leq l+2}\vert\partial^\beta\phi(F)\vert(1+\vert F\vert_{l+1}^{l+2})(1+\vert LF\vert_{l-1}).
\]
For $l=0$, we have
\[
c)\quad \vert L\phi(F)\vert\leq\vert\nabla\phi(F)\vert\vert LF\vert+\sup_{\vert\beta\vert=2}\vert\partial^\beta\phi(F)\vert\vert F\vert_{1,1}^{2}.
\]
\end{lemma}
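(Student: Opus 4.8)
The plan is to derive all three estimates from two chain rules in the IbP framework: the iterated (Fa\`{a} di Bruno) chain rule for $D$, which is just repeated use of (\ref{0.00}), and the diffusion-type chain rule for $L$,
\[
L\phi(F)=\sum_{i=1}^{d}\partial_{i}\phi(F)\,LF_{i}-\sum_{i,j=1}^{d}\partial_{ij}\phi(F)\,\langle DF_{i},DF_{j}\rangle_{\mathcal{H}};
\]
after a routine approximation it suffices to treat $\phi\in C_p^\infty$, so all expressions below live in $\mathcal{S}$. I would record the $L$-identity first: it follows from duality (\ref{0.01}), since for $G\in\mathcal{S}$ one has $\mathbb{E}(G\,L\phi(F))=\mathbb{E}\langle D\phi(F),DG\rangle_{\mathcal{H}}=\sum_{i}\mathbb{E}[\partial_{i}\phi(F)\langle DF_{i},DG\rangle_{\mathcal{H}}]$ by (\ref{0.00}); applying (\ref{0.00}) in Leibniz form to $D(G\,\partial_{i}\phi(F))$ and again to $D(\partial_{i}\phi(F))$, and using (\ref{0.01}) once more, turns each summand into $\mathbb{E}[G\,\partial_{i}\phi(F)LF_{i}]-\sum_{j}\mathbb{E}[G\,\partial_{ij}\phi(F)\langle DF_{i},DF_{j}\rangle_{\mathcal{H}}]$, and since this holds for all $G\in\mathcal{S}$ the identity follows (alternatively, in the concrete setting of Section~4.2, one differentiates the explicit formulas for $L^{Z}$ and $L^{W}$). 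Given the identity, part $c)$ is immediate: take absolute values, bound $|\langle DF_{i},DF_{j}\rangle_{\mathcal{H}}|\le|DF_{i}|_{\mathcal{H}}|DF_{j}|_{\mathcal{H}}$ by Cauchy--Schwarz, and use $\sum_{i,j}|DF_{i}|_{\mathcal{H}}|DF_{j}|_{\mathcal{H}}=\big(\sum_{i}|DF_{i}|_{\mathcal{H}}\big)^{2}=|F|_{1,1}^{2}$ together with $\sum_{i}|\partial_{i}\phi(F)||LF_{i}|\le|\nabla\phi(F)||LF|$.

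For $a)$ I would iterate (\ref{0.00}): for each $1\le q\le l$,
\[
D^{q}\phi(F)=\sum_{i=1}^{d}\partial_{i}\phi(F)\,D^{q}F_{i}+R_{q},
\]
where $R_{q}$ is a finite sum, with combinatorial coefficients depending only on $q$, of contractions into $\mathcal{H}^{\otimes q}$ of terms $\partial^{\beta}\phi(F)\,\big(D^{a_{1}}F_{j_{1}}\otimes\cdots\otimes D^{a_{p}}F_{j_{p}}\big)$ with $|\beta|=p\ge2$, $a_{r}\ge1$, $a_{1}+\cdots+a_{p}=q$. Since $p\ge2$ forces each $a_{r}\le q-1\le l-1$, every factor has norm at most $|F|_{1,l-1}$ and the product at most $|F|_{1,l-1}^{p}$ with $2\le p\le l$; collecting these powers gives $|R_{q}|_{\mathcal{H}^{\otimes q}}\le C_{l}\sup_{2\le|\beta|\le l}|\partial^{\beta}\phi(F)|\,|F|_{1,l-1}^{l}$. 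Summing over $q$ and adding the $\partial_{i}\phi(F)D^{q}F_{i}$ contributions yields $a)$.

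For $b)$ I would apply $D^{q}$, $0\le q\le l$, to the $L$-identity, expand each product by Leibniz, and then expand the emerging factors $D^{m}(\partial_{i}\phi(F))$ and $D^{m}(\partial_{ij}\phi(F))$ by Fa\`{a} di Bruno exactly as in $a)$. The only resulting term on which no derivative falls on a $\phi$-factor is $\sum_{i}\partial_{i}\phi(F)\,D^{q}LF_{i}$, which summed over $q$ produces the $|\nabla\phi(F)||LF|_{l}$ term. Every other term carries a factor $\partial^{\beta}\phi(F)$ with $2\le|\beta|\le q+2\le l+2$, a product of Malliavin derivatives of the $F_{i}$'s of total order $\le q+1\le l+1$ (the extra $+1$ because one derivative already sits inside $\langle DF_{i},DF_{j}\rangle_{\mathcal{H}}$) with at most $l+2$ of them, and at most one factor $D^{m}LF_{i}$ with $m\le q-1\le l-1$ (the terms originating from $\langle DF_{i},DF_{j}\rangle_{\mathcal{H}}$ carry none). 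Bounding by Cauchy--Schwarz and the triangle inequality and majorizing the exponents by $1+|F|_{l+1}^{l+2}$ and $1+|LF|_{l-1}$ -- the ``$1+$'' absorbing both the regime where those norms are below $1$ and the $\langle DF,DF\rangle_{\mathcal{H}}$-terms that contain no $LF$-factor -- gives $b)$.

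The genuinely delicate point is the combinatorial bookkeeping in $b)$: one must track simultaneously the order of the $\phi$-derivative, the total order and number of $F$-derivative factors, and the at-most-one $LF$-factor, so that no order exceeds $l+1$ (respectively $l-1$) and the exponents land at $l+2$ (respectively $1$). There is no analytic obstacle -- everything is algebraic identities, the triangle inequality, and Cauchy--Schwarz -- which is exactly why the statement can be quoted from $\cite{ref4}$.
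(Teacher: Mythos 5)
The paper does not prove this lemma: it is quoted from Bally--Cl\'ement (Lemma~8 and Lemma~10 of $\cite{ref4}$), with the one-line remark that it ``is a consequence of the chain rule for $D^q$ and $L$''. Your proposal supplies exactly the argument that remark points to -- Fa\`a di Bruno for $D^q$, the diffusion identity $L\phi(F)=\sum_i\partial_i\phi(F)LF_i-\sum_{i,j}\partial_{ij}\phi(F)\langle DF_i,DF_j\rangle_\mathcal{H}$ derived from duality and Leibniz, then Cauchy--Schwarz -- and the derivation of the $L$-identity and the combinatorial bookkeeping for $b)$ are set up correctly, including the extra unit of order contributed by the $\langle DF_i,DF_j\rangle_\mathcal{H}$ factor.

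One small caveat on $a)$, which is really a feature of the statement as the paper transcribes it rather than of your argument: Fa\`a di Bruno produces remainder terms of size $|\partial^\beta\phi(F)|\,|F|_{1,l-1}^{p}$ for every $2\le p\le l$, and the step ``collecting these powers gives $|R_q|\le C_l\sup|\partial^\beta\phi(F)|\,|F|_{1,l-1}^{l}$'' uses $|F|_{1,l-1}^{p}\le |F|_{1,l-1}^{l}$, which fails when $|F|_{1,l-1}<1$ (note that $b)$ and $c)$ carry explicit ``$1+$'' or exact-power factors and so avoid the issue). This does not affect anything the paper does with the lemma (the bounds are always fed into $L^p$ estimates where a polynomial in the norm is all that is needed), but if you want a bound that literally follows from your expansion you should write $\sup_{2\le|\beta|\le l}|\partial^\beta\phi(F)|\big(|F|_{1,l-1}^2+|F|_{1,l-1}^{l}\big)$ or $(1+|F|_{1,l-1})^{l}$ in place of $|F|_{1,l-1}^{l}$.
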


We finish this section with a first estimate concerning the operator $L$.
\begin{lemma}
Under the \textbf{Hypothesis 2.4} (either 2.4(a) or 2.4(b)), for every $p\geq2,\Tilde{p}\geq1,l\geq0$, there exists a constant $C_{l,p,\Tilde{p}}(T)$ such that
\begin{eqnarray}
\sup_{M\in\mathbb{N}}\mathbb{E} (\sum_{k=1}^{M-1}\sum_{i=1}^{J_t^k}\bar{c}(Z_i^k)\vert LZ_i^k\vert_l^{\Tilde{p}})^{p}\leq C_{l,p,\Tilde{p}}(T). \label{LZTrue}
\end{eqnarray}
\end{lemma}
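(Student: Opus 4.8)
The plan is to turn the summand $\bar{c}(Z_i^k)\,|LZ_i^k|_l^{\Tilde{p}}$ into an explicit \emph{deterministic} function of the marks $\xi_i^k,V_i^k$, to recognise $S_M:=\sum_{k=1}^{M-1}\sum_{i=1}^{J_t^k}\bar{c}(Z_i^k)\,|LZ_i^k|_l^{\Tilde{p}}$ as a Poisson integral of a deterministic kernel, and then to bound $\mathbb E(S_M)^p$ by Burkholder's inequality exactly as in the proof of \eqref{Burk}, the compensator integrals being kept bounded uniformly in $M$ by \eqref{3.19} and the splitting condition \eqref{3.13}. First I would compute $LZ_i^k$ and $|LZ_i^k|_l$ by hand. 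Since $Z_i^k=\xi_i^kV_i^k+(1-\xi_i^k)U_i^k\in\mathcal S$, with $\xi_i^k$ and $U_i^k$ being $\mathcal G$-measurable and therefore treated as constants by the calculus of Section 4.2, and since $Z_i^k$ carries no Gaussian dependence, only the $(k,i)$-coordinate of $D^Z$ survives: $D^Z_{(k,i)}Z_i^k=\xi_i^k$, $D^Z_{(k,i)}D^Z_{(k,i)}Z_i^k=0$, $L^WZ_i^k=0$, so that $LZ_i^k=L^ZZ_i^k=-\xi_i^k\,\theta_k(V_i^k)$ with $\theta_k=(\ln\psi_k)'$. Iterating the chain rule, $D^q(LZ_i^k)$ has the single nonzero coordinate $-\xi_i^k\,(\ln\psi_k)^{(q+1)}(V_i^k)$, so that $|LZ_i^k|_l=\xi_i^k\sum_{q=1}^{l+1}|(\ln\psi_k)^{(q)}(V_i^k)|$. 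Using $(\xi_i^k)^{\Tilde{p}}=\xi_i^k$ together with $Z_i^k=V_i^k$ on $\{\xi_i^k=1\}$ (the term vanishing on $\{\xi_i^k=0\}$), this gives the clean identity
\[ \bar{c}(Z_i^k)\,|LZ_i^k|_l^{\Tilde{p}}=\xi_i^k\,\bar{c}(V_i^k)\Big(\sum_{q=1}^{l+1}\big|(\ln\psi_k)^{(q)}(V_i^k)\big|\Big)^{\Tilde{p}}=:H_k(\xi_i^k,V_i^k), \]
where $H_k\ge0$ is a deterministic function of $(\xi,v)$.

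Next I would view $S_M=\sum_{k=1}^{M-1}\sum_{i=1}^{J_t^k}H_k(\xi_i^k,V_i^k)$ as the Poisson integral $\int_{[0,t]}\!\int H\,d\mathcal N$ of the deterministic kernel $H$ against the marked Poisson point measure $\mathcal N$ on $[0,T]\times\{1,\dots,M-1\}\times\{0,1\}\times\mathbb R$ that carries the block index and the marks $(\xi_i^k,V_i^k)$, with deterministic intensity $dr\otimes\sum_{k\le M-1}m_k\delta_k\otimes\mathrm{Ber}(\varepsilon_k m(\psi))\otimes\tfrac1{m(\psi)}\psi_k(v)\,dv$. Compensating $\mathcal N$ and applying Burkholder's inequality for Poisson integrals (Theorem 2.11 of $\cite{ref20}$, as in the proof of \eqref{Burk}), and using that $H$ is deterministic so that the compensator integrals are deterministic, gives
\[ \mathbb E(S_M)^p\le C_p\big(t\,A_2^{p/2}+t\,A_p+t^p A_1^p\big),\qquad A_j:=\sum_{k=1}^{M-1}m_k\,\mathbb E\big[H_k(\xi^k,V^k)^j\big]. \]
It then remains to bound $A_1,A_2,A_p$ uniformly in $M$.

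Since $\xi^k$ and $V^k$ are independent with $\mathbb P(\xi^k=1)=\varepsilon_k m(\psi)$ and $V^k$ of density $\psi_k/m(\psi)$, we have $A_j=\sum_{k=1}^{M-1}m_k\varepsilon_k\int_{I_k}\bar{c}(v)^j\big(\sum_{q=1}^{l+1}|(\ln\psi_k)^{(q)}(v)|\big)^{j\Tilde{p}}\psi_k(v)\,dv$. By the power--mean inequality and \eqref{3.19} (extended to real exponents), $\big(\sum_{q=1}^{l+1}|(\ln\psi_k)^{(q)}(v)|\big)^{j\Tilde{p}}\psi_k(v)\le C_{l,j,\Tilde{p}}$ uniformly in $k$ and $v$, and by the splitting condition \eqref{3.13}, $m_k\varepsilon_k\,dv\le\nu(dv)$ on $I_k$; hence $A_j\le C_{l,j,\Tilde{p}}\sum_{k}\int_{I_k}\bar{c}(v)^j\,\nu(dv)\le C_{l,j,\Tilde{p}}\,\bar{c}_j<\infty$ by \eqref{cbar}, independently of $M$. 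Inserting this (and $t\le T$) into the display of the previous paragraph yields $\sup_M\mathbb E(S_M)^p\le C_{l,p,\Tilde{p}}(T)$.

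The main obstacle is the first step: one must track the operators of Section 4.2 carefully enough to see that $LZ_i^k$ collapses to the single term $-\xi_i^k\theta_k(V_i^k)$ and, more importantly, realise that it is the density $\psi_k$ of $V^k$ that tames the otherwise unbounded logarithmic--derivative factors via \eqref{3.19}, while the splitting lower bound \eqref{3.13} is precisely what converts the sum $\sum_k m_k\varepsilon_k(\,\cdot\,)$ back into an integral against $\nu$, thereby producing the $M$-uniform constant $\bar{c}_j$. Everything else is routine compound--Poisson bookkeeping.
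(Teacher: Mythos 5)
Your proof is correct and follows the same overall strategy as the paper: compute $|LZ_i^k|_l=\xi_i^k\sum_{q=1}^{l+1}|(\ln\psi_k)^{(q)}(V_i^k)|$ explicitly, replace $Z_i^k$ by $V_i^k$ on $\{\xi_i^k=1\}$, recognise the sum as a Poisson integral of a deterministic kernel against the marked point measure generated by $(T_i^k,\xi_i^k,V_i^k)$, apply Burkholder as in \eqref{Burk}, and then use the uniform bound \eqref{3.19} to kill the logarithmic-derivative factors. Where you diverge from the paper is in the last step, and the divergence is worth flagging. The paper writes the compensator $\widehat{\Lambda}$ \emph{without} the rate factor $m_k$; its compensator integrals therefore come out as $\sum_k\varepsilon_k\int_{I_k}|\bar c(v)|^p\,dv$, and it closes the estimate by the auxiliary integrability conditions \eqref{aLfault} (under 2.4(a)) or \eqref{bLfault} (under 2.4(b)), via $\varepsilon_k\le\varepsilon_\ast/v^{1-\alpha_0}$ (resp.\ $\le\varepsilon_\ast/v$) on $I_k$. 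You keep the $m_k$ — which is the correct intensity for a marked compound Poisson process of jump rate $m_k$ — and this is exactly what lets you invoke the splitting inequality \eqref{3.13} in the form $m_k\varepsilon_k\,dv\le\nu(dv)$ on $I_k$ and close the estimate by $\bar c_p<\infty$ from \eqref{cbar} alone, without ever touching \eqref{aLfault}/\eqref{bLfault}. Both final bounds are finite under the lemma's hypotheses, but your version is the one that is internally consistent with the correct compensator; if one inserts the missing $m_k$ into the paper's compensator, the route through $\varepsilon_k\le\varepsilon_\ast/v^{1-\alpha_0}$ no longer obviously controls the extra $m_k$ factor (which may be unbounded, e.g.\ $m_k\sim k^\rho$ for the truncated stable example), whereas your $m_k\varepsilon_k\,dv\le\nu(dv)$ handles it automatically. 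In short: same architecture, but your last step is both cleaner (it needs only \eqref{3.13} and \eqref{cbar}) and, as written, the one that matches the correct intensity of the marked Poisson measure.
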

\begin{proof}
We notice that (with $\psi_k$ given in (\ref{3.19*})),
$LZ_i^k=\xi_i^k(\ln{\psi_k})^{\prime}(V_i^k)$
and $D^{W,l}LZ^k_i=0$. Moreover,
\[D^{Z,l}_{(r_1,m_1)\cdots(r_l,m_l)}LZ_i^k\\
=\prod\limits_{j=1}^l(\delta_{r_jk}\delta_{m_ji})\xi_i^k(\ln{\psi_k})^{(l+1)}(V_i^k),
\]with $\delta_{rk}$ the Kronecker delta,
so that \begin{eqnarray}
\vert LZ_i^k\vert_l=\xi_i^k\sum\limits_{0\leq\Tilde{l}\leq l}\vert(\ln{\psi_k})^{(\Tilde{l}+1)}(V_i^k)\vert. \label{LZtrue}
\end{eqnarray}It follows that
\begin{eqnarray*}
\mathbb{E} (\sum_{k=1}^{M-1}\sum_{i=1}^{J_t^k}\bar{c}(Z_i^k)\vert LZ_i^k\vert_l^{\Tilde{p}})^{p}\leq C_{l,p,\Tilde{p}}\sum\limits_{0\leq\Tilde{l}\leq l}\mathbb{E} (\sum_{k=1}^{M-1}\sum_{i=1}^{J_t^k}\bar{c}(Z_i^k)\xi_i^k\vert(\ln{\psi_k})^{(\Tilde{l}+1)}(V_i^k)\vert^{\Tilde{p}})^{p}.
\end{eqnarray*}
Since $\bar{c}(Z_i^k)\xi_i^k=\bar{c}(V_i^k)\xi_i^k$, we may replace $Z_i^k$ by $V_i^k$ in the right hand side of the above estimate. This gives
\[
C_{l,p,\Tilde{p}}\sum\limits_{0\leq\Tilde{l}\leq l}\mathbb{E} (\sum_{k=1}^{M-1}\sum_{i=1}^{J_t^k}\bar{c}(V_i^k)\xi_i^k\vert(\ln{\psi_k})^{(\Tilde{l}+1)}(V_i^k)\vert^{\Tilde{p}})^{p}=C_{l,p,\Tilde{p}}\sum\limits_{0\leq\Tilde{l}\leq l}\mathbb{E} \vert\int_0^t\int_{[1,M)}\int_{\{0,1\}}\bar{c}(v)\xi\vert(\ln{\bar{\psi}})^{(\Tilde{l}+1)}(v)\vert^{\Tilde{p}}\Lambda(ds,d\xi,dv)\vert^{p},
\]where $\bar{\psi}(v):=\sum\limits_{k=1}^\infty\mathbbm{1}_{I_k}(v)\psi(v-(k+\frac{1}{2}))$ and $\Lambda$ is a Poisson point measure on $\{0,1\}\times[1,\infty)$ with compensator 
\[
\widehat{\Lambda}(ds,d\xi,dv)=\sum_{k=1}^\infty[\frac{\psi(v-(k+\frac{1}{2}))}{m(\psi)}\mathbbm{1}_{I_k}(v)dv\times b(v,d\xi)]ds,
\]with $b(v,d\xi)$ the Bernoulli probability measure on $\{0,1\}$ with parameter $\varepsilon_km(\psi)$, if $v\in I_k$.
Then by compensating $\Lambda$ and using Burkholder's inequality (the same proof as for (\ref{Burk})),
\begin{eqnarray}
&&C_{l,p,\Tilde{p}}\sum\limits_{0\leq\Tilde{l}\leq l}\mathbb{E} \vert\int_0^t\int_{[1,M)}\int_{\{0,1\}}\bar{c}(v)\xi\vert(\ln{\bar{\psi}})^{(\Tilde{l}+1)}(v)\vert^{\Tilde{p}}\Lambda(ds,d\xi,dv)\vert^{p} \nonumber\\
&&\leq C_{l,p,\Tilde{p}}(T)\sum\limits_{0\leq\Tilde{l}\leq l}[ (\int_0^t\int_{[1,M)\times\{0,1\}}\vert\bar{c}(v)\vert^2\xi\vert(\ln{\bar{\psi}})^{(\Tilde{l}+1)}(v)\vert^{2\Tilde{p}}\widehat{\Lambda}(ds,d\xi,dv))^{\frac{p}{2}} \nonumber\\
&&+ \int_0^t\int_{[1,M)\times\{0,1\}}\vert\bar{c}(v)\vert^p\xi\vert(\ln{\bar{\psi}})^{(\Tilde{l}+1)}(v)\vert^{p\Tilde{p}}\widehat{\Lambda}(ds,d\xi,dv)+ \vert\int_0^t\int_{[1,M)\times\{0,1\}}\bar{c}(v)\xi\vert(\ln{\bar{\psi}})^{(\Tilde{l}+1)}(v)\vert^{\Tilde{p}}\widehat{\Lambda}(ds,d\xi,dv)\vert^{p}]. \label{LZtemp} \quad\quad
\end{eqnarray}
We notice that by (\ref{3.19}), \begin{eqnarray*}
&&\int_0^t\int_{[1,M)\times\{0,1\}}\vert\bar{c}(v)\vert^p\xi\vert(\ln{\bar{\psi}})^{(\Tilde{l}+1)}(v)\vert^{p\Tilde{p}}\widehat{\Lambda}(ds,d\xi,dv)=t\sum_{k=1}^{M-1}\int_{I_k}\varepsilon_km(\psi)\vert\bar{c}(v)\vert^p\vert(\ln{{\psi_k}})^{(\Tilde{l}+1)}(v)\vert^{p\Tilde{p}}\frac{\psi_k(v)}{m(\psi)}dv\\
&&\leq C_{\Tilde{l},p,\Tilde{p}}(T)\sum_{k=1}^{M-1}\varepsilon_k\int_{I_k}\vert\bar{c}(v)\vert^pdv.
\end{eqnarray*}Similar upper bounds hold for the two other terms in the right hanf side of (\ref{LZtemp}), so (\ref{LZtemp}) is upper bounded by
\begin{eqnarray}
C_{l,p,\Tilde{p}}(T)[(\sum_{k=1}^{M-1}\varepsilon_k\int_{I_k}\vert\bar{c}(v)\vert^2dv)^{\frac{p}{2}}+\sum_{k=1}^{M-1}\varepsilon_k\int_{I_k}\vert\bar{c}(v)\vert^pdv+(\sum_{k=1}^{M-1}\varepsilon_k\int_{I_k}\vert\bar{c}(v)\vert dv)^p].\label{LZTemp}
\end{eqnarray}If we assume the \textbf{Hypothesis 2.4} (a), then we have $\varepsilon_k=\varepsilon_\ast/(k+1)^{1-\alpha_0}$, with $\alpha_0$ given in (\ref{aLfault}). So the above term is less than 
\[
C_{l,p,\Tilde{p}}(T)[(\int_{1}^\infty\frac{\vert\bar{c}(v)\vert^2}{v^{1-\alpha_0}}dv)^{\frac{p}{2}}+\int_{1}^\infty\frac{\vert\bar{c}(v)\vert^p}{v^{1-\alpha_0}}dv+(\int_{1}^\infty\frac{\vert\bar{c}(v)\vert}{v^{1-\alpha_0}} dv)^p], 
\]which is upper bounded by a constant $C_{l,p,\Tilde{p}}(T)$ thanks to (\ref{aLfault}).
On the other hand, if we assume the \textbf{Hypothesis 2.4} (b), then $\varepsilon_k=\varepsilon_\ast/(k+1)$. So (\ref{LZTemp}) is upper bounded by a constant $C_{l,p,\Tilde{p}}(T)$ thanks to (\ref{bLfault}).

\end{proof}

\subsubsection{Estimations of $\Vert X_t^{n,M}\Vert_{L,l,p}$}
In this section, our aim is to prove the following lemma.
\begin{lemma}
Under the \textbf{Hypothesis 2.1} with $q^\ast\geq2$ and \textbf{Hypothesis 2.4} (either 2.4(a) or 2.4(b)), for all $p\geq2, 0\leq l\leq q^\ast$, there exists a constant $C_{l,p}(T)$ depending on $l,p,x$ and $T$, such that 
\begin{eqnarray}
a)\quad \sup\limits_{n}\sup\limits_{M}\mathbb{E}\vert X_t^{n,M}\vert_{l}^p\leq C_{l,p}(T), \label{EuD}
\end{eqnarray}and for $0\leq l\leq q^\ast-2$,
\begin{eqnarray}
b)\quad \sup\limits_{n}\sup\limits_{M}\mathbb{E}\vert LX_t^{n,M}\vert_{l}^p\leq C_{l,p}(T). \label{EuL}
\end{eqnarray}

\end{lemma}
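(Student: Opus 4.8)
The plan is to use the compound-Poisson representation (\ref{3.03}), which presents $X_t^{n,M}$ as a simple functional of $(V_i^k)$ and $(W_\nu(\varphi_j))$, and to close the estimates by a discrete Gronwall argument along the grid $\mathcal P_t^n=\{r_j=jt/n\}$, all bounds being uniform in $n$ and $M$. One proves $a)$ first, by induction on $l$ from $l=0$ to $q^\ast$. The case $l=0$ is the $L^p$-bound for $|X_t^{n,M}|$: compensating $N_\nu$, using Burkholder's inequality on the Poisson and space-time Brownian integrals and Jensen's inequality on the drift, together with $|\widetilde c|\le\bar c(z)$ and $\bar c_p<\infty$, gives $\sup_{n,M}\mathbb E|X_t^{n,M}|^p\le C_p(T)$. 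For the inductive step one applies $D^q$, $1\le q\le l$, to each term of (\ref{3.03}). On the jump term, the chain rule \textbf{Lemma 5.3} $a)$ applied with $F=(Z_i^k,X_{\tau_n(T_i^k)-}^{n,M})\in\mathcal S^2$, using $|\partial_z^{\beta_2}\partial_x^{\beta_1}\widetilde c|\le\bar c(z)$ for $\beta_1,\beta_2\le q^\ast$ (\textbf{Hypothesis 2.1}) and $|DZ_i^k|_{\mathcal H}=\xi_i^k\le1$, $D^jZ_i^k=0$ for $j\ge2$ (so $|Z_i^k|_{1,j}\le1$), yields
\[
\big|\widetilde c(\tau_n(T_i^k),Z_i^k,X_{\tau_n(T_i^k)-}^{n,M})\big|_{1,l}\le C_l\,\bar c(Z_i^k)\Big(1+\big|X_{\tau_n(T_i^k)-}^{n,M}\big|_{1,l}+\big|X_{\tau_n(T_i^k)-}^{n,M}\big|_{1,l-1}^{\,l}\Big);
\]
summing over $k,i$ (i.e.\ integrating against $N_\nu$ on $[1,M)$) and applying (\ref{Burk}) with $\varphi$ the $z$-independent factor produces a Gronwall term $C\int_0^t\mathbb E|X_{\tau_n(r)-}^{n,M}|_{1,l}^p\,dr$ plus a term handled by the induction hypothesis at level $l-1$. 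On the drift term one uses $|\partial_x^\beta b_M|\le\bar c_1$ and Jensen's inequality; on the Brownian term, \textbf{Lemma 5.2} $a)$ together with (\ref{euler_hyp}) (which bounds $\int\bar c(\gamma_1(z))^2\nu(dz)$). Hence $u_j:=\mathbb E|X_{r_j}^{n,M}|_l^p$ satisfies $u_j\le C(T)(1+\tfrac tn\sum_{j'<j}u_{j'})$ uniformly in $n,M$, and discrete Gronwall gives the uniform bound at grid points, hence (after a routine estimate on the increment over $[\tau_n(t),t]$) (\ref{EuD}) for all $t$; via Doob's $L^p$ inequality on the martingale parts one in fact obtains $\sup_{n,M}\mathbb E\sup_{s\le T}|X_s^{n,M}|_l^p\le C_{l,p}(T)$, which is needed in $b)$.

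For $b)$ one again inducts on $l$, from $l=0$ to $q^\ast-2$, applying $L=L^Z+L^W$ to (\ref{3.03}) and using the Ornstein--Uhlenbeck chain rule (writing $X$ for $X_{\tau_n(T_i^k)-}^{n,M}$)
\[
L\widetilde c(\tau_n(T_i^k),Z_i^k,X)=\partial_z\widetilde c\,LZ_i^k+\partial_x\widetilde c\,LX-\partial_z^2\widetilde c\,|DZ_i^k|^2-2\,\partial_z\partial_x\widetilde c\,\langle DZ_i^k,DX\rangle-\partial_x^2\widetilde c\,|DX|^2,
\]
together with the analogous one-variable identities for the drift and Brownian terms; the $|\cdot|_l$-norm of each piece is estimated through \textbf{Lemma 5.3} $b),c)$ applied to the factors $\partial_z^{\beta_2}\partial_x^{\beta_1}\widetilde c(\tau_n(T_i^k),Z_i^k,X)$, $LZ_i^k$, $LX$, $DZ_i^k$, $DX$ separately (so that $|Z_i^k|$, which is of order $k$, never appears, only $\bar c(Z_i^k)$ and $\xi_i^k\le1$), using $|\partial_z^{\beta_2}\partial_x^{\beta_1}\widetilde c|\le\bar c(z)$ (valid since $l+2\le q^\ast$). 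After Leibniz expansion, the only piece carrying the top-order factor $|LX|_l$ with an $X$-free coefficient is $\bar c(Z_i^k)|LX|_l$ (from $\partial_x\widetilde c\cdot LX$ with all derivatives on $LX$); summed over $k,i$ via (\ref{Burk}) it is exactly the Gronwall term with the right exponent. Every other piece is of one of three types: (i) a lower-order norm $|LX|_{l-1}$ (or $|LX|$) times a polynomial in the norms $|X|_{1,l+1}$, controlled by Cauchy--Schwarz, $a)$ and the induction hypothesis of $b)$ at level $l-1$; (ii) a polynomial in $|X|_{1,l+1}$ alone, controlled by $a)$ (this is where $l+1\le q^\ast$ enters); (iii) a term containing $|LZ_i^k|_l$, of the form $\bar c(Z_i^k)(1+|X|_{1,l})^{l}|LZ_i^k|_l$, for which one sums over $k,i$ and separates, by H\"older, the factor $\sup_{s\le t}(1+|X_s^{n,M}|_{1,l})^{l}$ (bounded in every $L^p$ by the strengthened $a)$) from $\sum_{k,i}\bar c(Z_i^k)|LZ_i^k|_l$, bounded by \textbf{Lemma 5.4} with $\widetilde p=1$. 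The drift and Brownian terms are handled as in $a)$, now through \textbf{Lemma 5.2} $b)$ and \textbf{Lemma 5.3} $b)$ for $x\mapsto\widetilde c(\tau_n(r),\gamma_n(z),x)$ and $x\mapsto b_M(\tau_n(r),x)$ — the polynomial factors $(1+|X_{\tau_n(r)}^{n,M}|_{l+1}^{l+2})$ produced there are harmless because $|X_{\tau_n(r)}^{n,M}|$ has all moments by $a)$. Thus $v_j:=\mathbb E|LX_{r_j}^{n,M}|_l^p$ obeys $v_j\le C(T)(1+\tfrac tn\sum_{j'<j}v_{j'})$ uniformly in $n,M$, and discrete Gronwall yields (\ref{EuL}).

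The main obstacle is the term $\partial_z\widetilde c\,LZ_i^k$ in $b)$: the factor $LZ_i^k=\xi_i^k(\ln\psi_k)'(V_i^k)$ is not pathwise bounded — the logarithmic derivatives of $\psi_k$ blow up near the boundary of its support — so it cannot be extracted from the Poisson sum and must instead be treated by the compensation-plus-Burkholder argument already carried out in \textbf{Lemma 5.4}, which is exactly where the splitting weights $\varepsilon_k$ and the hypotheses (\ref{aLfault})/(\ref{bLfault}) are needed. The remaining difficulty is organisational: the iterated Leibniz expansions of $D^q$ and $L$ on the nested Euler scheme must be arranged so that the highest-order factors $|X|_{1,l}$ and $|LX|_l$ always appear multiplied by the $X$-free bound $\bar c(z)$ (so the Gronwall constant does not depend on $X$), every remaining factor being a norm already bounded by $a)$ or by the induction, and one must check that no derivative of $\widetilde c$ of order above $q^\ast$ is ever invoked — which is precisely what $l\le q^\ast$ in $a)$ and $l\le q^\ast-2$ in $b)$ ensure.
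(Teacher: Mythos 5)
Your proposal follows essentially the same route as the paper's proof of Lemma 5.5: induction on $l$, decomposition of the Euler scheme into drift, jump and Brownian pieces, $L^p$ estimates via Burkholder's inequality, \textbf{Lemma 5.2}, the chain rule of \textbf{Lemma 5.3}, and \textbf{Lemma 5.4} for the $LZ_i^k$ contribution, closed by Gronwall (discrete on the grid in your version, continuous on the step function $g(t)=\mathbb{E}\vert X_{\tau_n(t)}^{n,M}\vert_l^p$ in the paper's; the two are interchangeable). You also correctly locate the crux: the factor $LZ_i^k=\xi_i^k(\ln\psi_k)'(V_i^k)$ is not pathwise bounded and must be absorbed through the compensator estimate of \textbf{Lemma 5.4}, which is exactly where \textbf{Hypothesis 2.4} enters.

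The one point where you genuinely deviate is the treatment, in part $b)$, of the cross term
$\sum_{k,i}\bar c(Z_i^k)\vert X_{\tau_n(T_i^k)-}^{n,M}\vert_{l+1}^{l+2}\vert LZ_i^k\vert_{l-1}$.
You propose to pull out $\sup_{s\le t}\vert X_s^{n,M}\vert_{l+1}^{l+2}$ by H\"older and to invoke a strengthened form of $a)$ with a supremum in time, which you attribute to Doob's inequality but do not actually carry out; it would require a martingale decomposition of each $\mathcal{H}^{\otimes q}$-valued process $D^qX_s^{n,M}$ and a Hilbert-valued Doob inequality — a non-trivial addition. The paper avoids this detour: it applies Cauchy--Schwarz twice (once inside the double sum over $(k,i)$, once on the expectation), leaving the $X$-factor inside the Poisson sum so that the first resulting factor $\mathbb{E}\big(\sum_{k,i}\bar c(Z_i^k)\vert X^{n,M}\vert_{l+1}^{2(l+2)}\big)^p$ is bounded directly by (\ref{Burk}) together with the plain pointwise bound (\ref{EuD}), and the second by \textbf{Lemma 5.4}. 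Your route is viable but costs an extra estimate that the paper shows is unnecessary; otherwise the role of $l+2\le q^\ast$, the appearance of only $\bar c(Z_i^k)$ and $\xi_i^k$ (never $\vert Z_i^k\vert$), and the propagation of the Gronwall structure to both $D$ and $L$ norms match the paper.
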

\begin{proof}
In the following proof, $C_{l,p}(T)$ will be a constant which depends on $l,p,x$ and $T$, and which may change from a line to another. $q^\ast\geq2$ is fixed throughout the proof.

\bigskip

$a)$ We prove (\ref{EuD}) for $0\leq l\leq q^\ast$ by recurrence on $l$.

\textbf{Step 1} For $l=0$, using Burkholder's inequality, \textbf{Hypothesis 2.1} and (\ref{complement*}),
\begin{eqnarray}
\mathbb{E}\vert X_t^{n,M}\vert^p&\leq& C_{0,p}(T)[x^p+\mathbb{E}\vert\int_0^tb_M(\tau_n(r),X_{\tau_n(r)}^{n,M})dr\vert^p+\mathbb{E}\vert\int_0^t\int_{\{z\geq M\}}\widetilde{c}(\tau_n(r),\gamma_n(z),X_{\tau_n(r)}^{n,M})W_\nu(dr,dz)\vert^p \nonumber\\
&+&\mathbb{E}\vert\int_0^t\int_{[1,M)}\widetilde{c}(\tau_n(r),z,X_{\tau_n(r)-}^{n,M})N_\nu(dr,dz)\vert^p] \nonumber\\
&\leq& C_{0,p}(T)[1+\mathbb{E}\int_0^t\vert \int_{\{z\geq M\}}\widetilde{c}(\tau_n(r),z,X_{\tau_n(r)}^{n,M})\nu(dz)\vert^pdr\nonumber\\
&+&\mathbb{E}\int_0^t(\int_{\{z\geq M\}}\vert\widetilde{c}(\tau_n(r),\gamma_n(z),X_{\tau_n(r)}^{n,M})\vert^2\nu(dz))^{\frac{p}{2}}dr+\mathbb{E}\int_0^t(\int_{[1,M)}\vert\widetilde{c}(\tau_n(r),z,X_{\tau_n(r)-}^{n,M})\vert^2\nu(dz))^{\frac{p}{2}}dr\nonumber\\
&+&\mathbb{E}\int_0^t\int_{[1,M)}\vert\widetilde{c}(\tau_n(r),z,X_{\tau_n(r)-}^{n,M})\vert^p\nu(dz)dr+\mathbb{E}\int_0^t\vert \int_{[1,M)}\widetilde{c}(\tau_n(r),z,X_{\tau_n(r)-}^{n,M})\nu(dz)\vert^pdr] \nonumber\\
&\leq&C_{0,p}(T). \label{extra}
\end{eqnarray}

\textbf{Step 2} Now we assume that (\ref{EuD}) holds for $l-1$, with $l\geq1$ and for every $p\geq2$, and we prove that it holds for $l$ and for every $p\geq2$. 
We write $\mathbb{E}\vert X_t^{n,M}\vert_{l}^p\leq C_{l,p}(T)(A_1+A_2+A_3)$, with 
\[
A_1=\mathbb{E}\vert\int_0^tb_M(\tau_n(r),X_{\tau_n(r)}^{n,M})dr\vert_{l}^p,
\]
\[
A_2=\mathbb{E}\vert\int_0^t\int_{\{z\geq M\}}\widetilde{c}(\tau_n(r),\gamma_n(z),X_{\tau_n(r)}^{n,M})W_\nu(dr,dz)\vert_{l}^p,
\]
\[
A_3=\mathbb{E}\vert\int_0^t\int_{[1,M)}\widetilde{c}(\tau_n(r),z,X_{\tau_n(r)-}^{n,M})N_\nu(dr,dz)\vert_{l}^p.
\]

We notice that by \textbf{Hypothesis 2.1}, 
$\Vert b_M\Vert_{l,\infty}\leq \bar{c}_1.$
Then using \textbf{Lemma 5.3} $a)$ and the recurrence hypothesis, we get
\begin{eqnarray}
A_1&\leq& {C}_{l,p}(T)\mathbb{E}\int_0^t\vert b_M(\tau_n(r),X_{\tau_n(r)}^{n,M})\vert_{l}^pdr\nonumber \\
&\leq& {C}_{l,p}(T)[(\bar{c}_1)^p+\mathbb{E}\int_0^t\vert\partial_x b_M(\tau_n(r),X_{\tau_n(r)}^{n,M})\vert^p\vert X_{\tau_n(r)}^{n,M}\vert_{1,l}^{p}dr\nonumber\\
&+&\mathbb{E}\int_0^t\sup\limits_{2\leq\vert\beta\vert\leq l}\vert\partial_x^\beta b_M(\tau_n(r),X_{\tau_n(r)}^{n,M})\vert^p\vert X_{\tau_n(r)}^{n,M}\vert_{1,l-1}^{lp}dr]\nonumber\\
&\leq&{C}_{l,p}(T)[1+\int_0^t\mathbb{E}\vert X_{\tau_n(r)}^{n,M}\vert_{l}^{p}dr]. \label{A1}
\end{eqnarray}

Next, we estimate $A_2$. By \textbf{Hypothesis 2.1}, for every $n$, $\Vert\widetilde{c}(\tau_n(r),\gamma_n(z),\cdot)\Vert_{l,\infty}\leq \vert\bar{c}(\gamma_n(z))\vert$.
Then using \textbf{Lemma 5.2} $a)$, \textbf{Lemma 5.3} $a)$, (\ref{euler_hyp}) and the recurrence hypothesis, we get
\begin{eqnarray}
A_2&\leq& {C}_{l,p}(T)[\mathbb{E}\int_0^t(\int_{\{z\geq M\}}\vert\widetilde{c}(\tau_n(r),\gamma_n(z),X_{\tau_n(r)}^{n,M})\vert_{l}^2\nu(dz))^{\frac{p}{2}}dr\nonumber\\
&\leq& {C}_{l,p}(T)[\mathbb{E}\int_0^t(\int_{\{z\geq M\}}\vert\partial_{x}\widetilde{c}(\tau_n(r),\gamma_n(z),X_{\tau_n(r)}^{n,M})\vert^2\vert X_{\tau_n(r)}^{n,M}\vert_{1,l}^2\nu(dz))^{\frac{p}{2}}dr\nonumber\\
&+&\mathbb{E}\int_0^t(\int_{\{z\geq M\}}\sup\limits_{2\leq\vert\beta\vert\leq l}\vert\partial_{x}^\beta \widetilde{c}(\tau_n(r),\gamma_n(z),X_{\tau_n(r)}^{n,M})\vert^2\vert X_{\tau_n(r)}^{n,M}\vert_{1,l-1}^{2l}\nu(dz))^{\frac{p}{2}}dr\nonumber\\
&+&\mathbb{E}\int_0^t(\int_{\{z\geq M\}}\vert\widetilde{c}(\tau_n(r),\gamma_n(z),X_{\tau_n(r)}^{n,M})\vert^2\nu(dz))^{\frac{p}{2}}dr\nonumber\\
&\leq&{C}_{l,p}(T)[1+\int_0^t\mathbb{E}\vert X_{\tau_n(r)}^{n,M}\vert_{l}^{p}dr]. \label{A2}
\end{eqnarray}

Finally we estimate $A_3$.
We notice that 
$D^{Z}_{(r,m)}Z_i^k=\xi_i^k\delta_{rk}\delta_{mi}$, $D^W_{(s,z)}Z_i^k=0,$
and for $l\geq2$, \\
$D^{Z,W,l}_{(r_1,m_1)\cdots(r_l,m_l),(s_1,z_1)\cdots(s_l,z_l)}Z_i^k=0$. So we have $\vert Z_i^k\vert_{1,l}^p= \vert\xi_i^k\vert^p\leq1$.
By \textbf{Lemma 5.3} $a)$ for $d=2$, \textbf{Hypothesis 2.1}, for any $k,i\in\mathbb{N}$,
\begin{eqnarray*}
&&\vert\widetilde{c}(\tau_n(T_i^k),Z_i^k,X^{n,M}_{\tau_n(T_i^k)-})\vert_{l}\leq \vert \bar{c}(Z_i^k)\vert\\
&&+(\vert\partial_{z}\widetilde{c}(\tau_n(T_i^k),Z_i^k,X_{\tau_n(T_i^k)-}^{n,M})\vert+\vert\partial_{x}\widetilde{c}(\tau_n(T_i^k),Z_i^k,X_{\tau_n(T_i^k)-}^{n,M})\vert)(\vert Z_i^k\vert_{1,l}+\vert X_{\tau_n(T_i^k)-}^{n,M}\vert_{1,l})\\
&&+{C}_{l,p}(T)\sup\limits_{2\leq\vert\beta_1+\beta_2\vert\leq l}(\vert\partial_{z}^{\beta_2}\partial_{x}^{\beta_1}\widetilde{c}(\tau_n(T_i^k),Z_i^k,X_{\tau_n(T_i^k)-}^{n,M})\vert)(\vert Z_i^k\vert_{1,l-1}^{l}+\vert X_{\tau_n(T_i^k)-}^{n,M}\vert_{1,l-1}^{l})\\
&&\leq{C}_{l,p}(T)\bar{c}(Z_i^k)(1+\vert X_{\tau_n(T_i^k)-}^{n,M}\vert_{l}+\vert X_{\tau_n(T_i^k)-}^{n,M}\vert_{l-1}^{l}).
\end{eqnarray*}
It follows that 
\begin{eqnarray}
A_3&\leq& \mathbb{E}(\sum_{k=1}^{M-1}\sum_{i=1}^{J^k_t}\vert\widetilde{c}(\tau_n(T_i^k),Z_i^k,X^{n,M}_{\tau_n(T_i^k)-})\vert_{l})^p\leq {C}_{l,p}(T)\mathbb{E}(\sum_{k=1}^{M-1}\sum_{i=1}^{J^k_t}\bar{c}(Z_i^k)(1+\vert X_{\tau_n(T_i^k)-}^{n,M}\vert_{l}+\vert X_{\tau_n(T_i^k)-}^{n,M}\vert_{l-1}^{l}))^p \nonumber\\
&=&{C}_{l,p}(T)\mathbb{E}(\int_0^t\int_{[1,M)}{\bar{c}}(z)(1+\vert X_{\tau_n(r)-}^{n,M}\vert_{l}+\vert X_{\tau_n(r)-}^{n,M}\vert_{l-1}^{l})N_\nu(dr,dz))^p\nonumber\\
&\leq& {C}_{l,p}(T)[1+\int_0^t\mathbb{E}\vert X_{\tau_n(r)}^{n,M}\vert_{l}^pdr],  \label{A3}
\end{eqnarray}
where the last inequality is obtained by using (\ref{Burk}) and recurrence hypothesis.
Then combining (\ref{A1}),(\ref{A2}) and (\ref{A3}), 
\begin{eqnarray}
\mathbb{E}\vert X_t^{n,M}\vert_{l}^p\leq {C}_{l,p}(T)[1+\int_0^t\mathbb{E}\vert X_{\tau_n(r)}^{n,M}\vert_{l}^pdr]. \label{gronD}
\end{eqnarray}
So $\mathbb{E}\vert X_{\tau_n(t)}^{n,M}\vert_{l}^p\leq {C}_{l,p}(T)[1+\int_0^{\tau_n(t)}\mathbb{E}\vert X_{\tau_n(r)}^{n,M}\vert_{l}^pdr]\leq {C}_{l,p}(T)[1+\int_0^{t}\mathbb{E}\vert X_{\tau_n(r)}^{n,M}\vert_{l}^pdr]$. We denote temporarily $g(t)=\mathbb{E}\vert X_{\tau_n(t)}^{n,M}\vert_{l}^p$, then we have $g(t)\leq {C}_{l,p}(T)[1+\int_0^{t}g(r)dr]$.
By Gronwall's lemma, $g(t)\leq {C}_{l,p}(T)e^{T{C}_{l,p}(T)}$,  which means that
\[
\mathbb{E}\vert X_{\tau_n(t)}^{n,M}\vert_{l}^p\leq {C}_{l,p}(T)e^{T{C}_{l,p}(T)}.
\]
Substituting into (\ref{gronD}), we conclude that 
\begin{eqnarray}
\sup_{n,M}\mathbb{E}\vert X_t^{n,M}\vert_{l}^p\leq C_{l,p}(T). \label{D*}
\end{eqnarray}

As a summary of the recurrence argument, we remark that the uniform bound in $n,M$ of the operator $D$ for $l=0$ is due to the \textbf{Hypothesis 2.1}, and it propagates to larger $l$ thanks to \textbf{Lemma 5.3} $a)$.

\bigskip

$b)$ Now we prove (\ref{EuL}) for $0\leq l\leq q^\ast-2$, by recurrence on $l$. 

\textbf{Step 1} One has to check that (\ref{EuL}) holds for $l=0$.  The proof is analogous to that in the following \textbf{Step 2}, but simpler. It is done by using \textbf{Lemma 5.3} $c)$, (\ref{ap2}), Burkholder's inequality, \textbf{Hypothesis 2.1,2.4}, (\ref{euler_hyp}), (\ref{EuD}) and Gronwall's lemma. So we skip it.

\textbf{Step 2} Now we assume that (\ref{EuL}) holds for $l-1$, with $l\geq1$ and for any $p\geq2$ and we prove that it holds for $l$ and for any $p\geq2$. We write $\mathbb{E}\vert LX_t^{n,M}\vert_{l}^p\leq C_{l,p}(T)(B_1+B_2+B_3)$, with 
\[
B_1=\mathbb{E}\vert L\int_0^tb_M(\tau_n(r),X_{\tau_n(r)}^{n,M})dr\vert_{l}^p,
\]
\[
B_2=\mathbb{E}\vert L\int_0^t\int_{\{z\geq M\}}\widetilde{c}(\tau_n(r),\gamma_n(z),X_{\tau_n(r)}^{n,M})W_\nu(dr,dz)\vert_{l}^p,
\]
\[
B_3=\mathbb{E}\vert L\int_0^t\int_{[1,M)}\widetilde{c}(\tau_n(r),z,X_{\tau_n(r)-}^{n,M})N_\nu(dr,dz)\vert_{l}^p.
\]
Using \textbf{Lemma 5.3} $b)$, \textbf{Hypothesis 2.1}, the recurrence hypothesis and (\ref{EuD}), we get
\begin{eqnarray}
B_1&\leq& {C}_{l,p}(T)\mathbb{E}\int_0^t\vert Lb_M(\tau_n(r),X_{\tau_n(r)}^{n,M})\vert_{l}^pdr \nonumber\\
&\leq& {C}_{l,p}(T)[\mathbb{E}\int_0^t\vert\partial_x b_M(\tau_n(r),X_{\tau_n(r)}^{n,M})\vert^p\vert LX_{\tau_n(r)}^{n,M}\vert_{l}^{p}dr \nonumber\\
&+&\mathbb{E}\int_0^t\sup\limits_{2\leq\vert\beta\vert\leq l+2}\vert\partial_x^\beta b_M(\tau_n(r),X_{\tau_n(r)}^{n,M})\vert^p(1+\vert X_{\tau_n(r)}^{n,M}\vert_{l+1}^{(l+2)p})(1+\vert LX_{\tau_n(r)}^{n,M}\vert_{l-1}^{p})dr] \nonumber\\
&\leq&{C}_{l,p}(T)[1+\int_0^t\mathbb{E}\vert LX_{\tau_n(r)}^{n,M}\vert_{l}^{p}dr]. \label{B1}
\end{eqnarray}

Then by \textbf{Lemma 5.2} $b)$, we get
\begin{eqnarray*}
B_2&\leq& C_{l,p}(T)[\mathbb{E}\int_0^t(\int_{\{z\geq M\}}\vert L\widetilde{c}(\tau_n(r),\gamma_n(z),X_{\tau_n(r)}^{n,M})\vert_{l}^2\nu(dz))^{\frac{p}{2}}dr\\
&+&\mathbb{E}\int_0^t(\int_{\{z\geq M\}}\vert\widetilde{c}(\tau_n(r),\gamma_n(z),X_{\tau_n(r)}^{n,M})\vert_{l}^2\nu(dz))^{\frac{p}{2}}dr]\\
&:=&C_{l,p}(T)[B_{2,1}+B_{2,2}].
\end{eqnarray*}
As a consequence of \textbf{Lemma 5.3} $b)$, we have
\begin{eqnarray*}
B_{2,1}&\leq& C_{l,p}(T)[\mathbb{E}\int_0^t(\int_{\{z\geq M\}}\vert\partial_{x}\widetilde{c}(\tau_n(r),\gamma_n(z),X_{\tau_n(r)}^{n,M})\vert^2\vert LX_{\tau_n(r)}^{n,M}\vert_{l}^2\nu(dz))^{\frac{p}{2}}dr\\
&+&\mathbb{E}\int_0^t(\int_{\{z\geq M\}}\sup\limits_{2\leq\vert\beta\vert\leq l+2}\vert\partial_{x}^\beta \widetilde{c}(\tau_n(r),\gamma_n(z),X_{\tau_n(r)}^{n,M})\vert^2(1+\vert X_{\tau_n(r)}^{n,M}\vert_{l+1}^{2(l
+2)})(1+\vert LX_{\tau_n(r)}^{n,M}\vert_{l-1}^{2})\nu(dz))^{\frac{p}{2}}dr].
\end{eqnarray*}
And using \textbf{Lemma 5.3} $a)$, 
\begin{eqnarray*}
B_{2,2}&\leq& C_{l,p}(T)[\mathbb{E}\int_0^t(\int_{\{z\geq M\}}\vert\widetilde{c}(\tau_n(r),\gamma_n(z),X_{\tau_n(r)}^{n,M})\vert^2\nu(dz))^{\frac{p}{2}}dr\\
&+&\mathbb{E}\int_0^t(\int_{\{z\geq M\}}\vert\partial_{x}\widetilde{c}(\tau_n(r),\gamma_n(z),X_{\tau_n(r)}^{n,M})\vert^2\vert X_{\tau_n(r)}^{n,M}\vert_{1,l}^2\nu(dz))^{\frac{p}{2}}dr\\
&+&\mathbb{E}\int_0^t(\int_{\{z\geq M\}}\sup\limits_{2\leq\vert\beta\vert\leq l}\vert\partial_{x}^\beta \widetilde{c}(\tau_n(r),\gamma_n(z),X_{\tau_n(r)}^{n,M})\vert^2\vert X_{\tau_n(r)}^{n,M}\vert_{1,l-1}^{2l}\nu(dz))^{\frac{p}{2}}dr].
\end{eqnarray*}
Then by \textbf{Hypothesis 2.1}, (\ref{euler_hyp}), (\ref{EuD}) and the recurrence hypothesis,
\begin{eqnarray}
B_2\leq  C_{l,p}(T)[1+\int_0^t\mathbb{E}\vert LX_{\tau_n(r)}^{n,M}\vert_{l}^{p}dr].  \label{B2}
\end{eqnarray}

Now we estimate $B_3$.
By \textbf{Lemma 5.3} $b)$ for $d=2$, \textbf{Hypothesis 2.1}, for any $k,i\in\mathbb{N}$,
\begin{eqnarray*}
&&\vert L\widetilde{c}(\tau_n(T_i^k),Z_i^k,X^{n,M}_{\tau_n(T_i^k)-})\vert_{l}\leq(\vert\partial_{z}\widetilde{c}(\tau_n(T_i^k),Z_i^k,X_{\tau_n(T_i^k)-}^{n,M})\vert+\vert\partial_{x}\widetilde{c}(\tau_n(T_i^k),Z_i^k,X_{\tau_n(T_i^k)-}^{n,M})\vert)(\vert LZ_i^k\vert_{l}+\vert LX_{\tau_n(T_i^k)-}^{n,M}\vert_{l})\\
&&+C_{l,p}(T)\sup\limits_{2\leq\vert\beta_1+\beta_2\vert\leq l+2}(\vert\partial_{z}^{\beta_2}\partial_{x}^{\beta_1}\widetilde{c}(\tau_n(T_i^k),Z_i^k,X_{\tau_n(T_i^k)-}^{n,M})\vert)\\
&&\times(1+\vert Z_i^k\vert_{l+1}^{l+2}+\vert X_{\tau_n(T_i^k)-}^{n,M}\vert_{l+1}^{l+2})(1+\vert LZ_i^k\vert_{l-1}+\vert LX_{\tau_n(T_i^k)-}^{n,M}\vert_{l-1})\\
&&\leq C_{l,p}(T){\bar{c}}(Z_i^k)(1+\vert LZ_i^k\vert_{l}+\vert LX_{\tau_n(T_i^k)-}^{n,M}\vert_{l}+\vert X_{\tau_n(T_i^k)-}^{n,M}\vert_{l+1}^{l+2}+\vert X_{\tau_n(T_i^k)-}^{n,M}\vert_{l+1}^{l+2}\times(\vert LZ_i^k\vert_{l-1}+\vert LX_{\tau_n(T_i^k)-}^{n,M}\vert_{l-1})).
\end{eqnarray*}\\
Then 
\begin{eqnarray*}
&&B_3\leq\mathbb{E}(\sum_{k=1}^{M-1}\sum_{i=1}^{J_t^k}\vert L\widetilde{c}(\tau_n(T_i^k),Z_i^k,X^{n,M}_{\tau_n(T_i^k)-})\vert_{l})^p  \\
&&\leq C_{l,p}(T)\mathbb{E}\vert\sum_{k=1}^{M-1}\sum_{i=1}^{J_t^k}{\bar{c}}(Z_i^k)(1+\vert LZ_i^k\vert_{l}+\vert LX_{\tau_n(T_i^k)-}^{n,M}\vert_{l}+\vert X_{\tau_n(T_i^k)-}^{n,M}\vert_{l+1}^{l+2}  \\
&&+\vert X_{\tau_n(T_i^k)-}^{n,M}\vert_{l+1}^{l+2}\times(\vert LZ_i^k\vert_{l-1}+\vert LX_{\tau_n(T_i^k)-}^{n,M}\vert_{l-1}))\vert^p  \\
&&\leq C_{l,p}(T)(B_{3,1}+B_{3,2}+B_{3,3}),
\end{eqnarray*}where 
\[
B_{3,1}=\mathbb{E}(\sum_{k=1}^{M-1}\sum_{i=1}^{J_t^k}{\bar{c}}(Z_i^k)\vert LX_{\tau_n(T_i^k)-}^{n,M}\vert_{l})^p,
\]
\[
B_{3,2}=\mathbb{E}\vert\sum_{k=1}^{M-1}\sum_{i=1}^{J_t^k}{\bar{c}}(Z_i^k)(\vert LZ_i^k\vert_{l}+\vert X_{\tau_n(T_i^k)-}^{n,M}\vert_{l+1}^{l+2}\times\vert LZ_i^k\vert_{l-1})\vert^p,
\]
\[
B_{3,3}=\mathbb{E}\vert\sum_{k=1}^{M-1}\sum_{i=1}^{J_t^k}{\bar{c}}(Z_i^k)(1+\vert X_{\tau_n(T_i^k)-}^{n,M}\vert_{l+1}^{l+2}+\vert X_{\tau_n(T_i^k)-}^{n,M}\vert_{l+1}^{l+2}\times\vert LX_{\tau_n(T_i^k)-}^{n,M}\vert_{l-1})\vert^p.
\]

By (\ref{Burk}),
\begin{eqnarray}
B_{3,1}&=&\mathbb{E}\vert\int_0^t\int_{[1,M)}{\bar{c}}(z)\vert LX_{\tau_n(r)-}^{n,M}\vert_{l}N_\nu(dr,dz)\vert^p  \nonumber\\
&\leq& C_{l,p}(T)\int_0^t\mathbb{E}\vert LX_{\tau_n(r)-}^{n,M}\vert_{l}^pdr.  \label{b31}
\end{eqnarray}

Using Schwartz's inequality, (\ref{Burk}) and (\ref{EuD}), we have \begin{eqnarray*}
&&\mathbb{E}(\sum_{k=1}^{M-1}\sum_{i=1}^{J_t^k}{\bar{c}}(Z_i^k)\vert X_{\tau_n(T_i^k)-}^{n,M}\vert_{l+1}^{l+2}\times\vert LZ_i^k\vert_{l-1})^p\\
&&\leq [\mathbb{E}(\sum_{k=1}^{M-1}\sum_{i=1}^{J_t^k}{\bar{c}}(Z_i^k)\vert X_{\tau_n(T_i^k)-}^{n,M}\vert_{l+1}^{2(l+2)})^p]^{\frac{1}{2}}\times[\mathbb{E}(\sum_{k=1}^{M-1}\sum_{i=1}^{J_t^k}{\bar{c}}(Z_i^k)\vert LZ_i^k\vert_{l-1}^{2})^p]^{\frac{1}{2}}\\
&&= [\mathbb{E}\vert\int_0^t\int_{[1,M)}{\bar{c}}(z)\vert X_{\tau_n(r)-}^{n,M}\vert_{l+1}^{2(l+2)}N_\nu(dr,dz)\vert^p]^{\frac{1}{2}}\times[\mathbb{E}(\sum_{k=1}^{M-1}\sum_{i=1}^{J_t^k}{\bar{c}}(Z_i^k)\vert LZ_i^k\vert_{l-1}^{2})^p]^{\frac{1}{2}}\\
&&\leq C_{l,p}(T)[\mathbb{E}(\sum_{k=1}^{M-1}\sum_{i=1}^{J_t^k}{\bar{c}}(Z_i^k)\vert LZ_i^k\vert_{l-1}^{2})^p]^{\frac{1}{2}}. 
\end{eqnarray*}
Then applying \textbf{Lemma 5.4}, we get
\begin{equation}
    B_{3,2}\leq {C}_{l,p}(T). \label{b32}
\end{equation}

By (\ref{Burk}), (\ref{EuD}) and recurrence hypothesis, we have
\begin{eqnarray}
B_{3,3}&=&\mathbb{E}\vert\int_0^t\int_{[1,M)}{\bar{c}}(z)(1+\vert X_{\tau_n(r)-}^{n,M}\vert_{l+1}^{l+2}+\vert X_{\tau_n(r)-}^{n,M}\vert_{l+1}^{l+2}\times\vert LX_{\tau_n(r)-}^{n,M}\vert_{l-1})N_\nu(dr,dz)\vert^p\nonumber\\
&\leq& {C}_{l,p}(T).  \label{b33}
\end{eqnarray}

So by (\ref{b31}),(\ref{b32}) and (\ref{b33}), 
\begin{equation}
    B_3\leq {C}_{l,p}(T)[1+\int_0^t\mathbb{E}\vert LX_{\tau_n(r)-}^{n,M}\vert_{l}^pdr]. \label{B3}
\end{equation}
Then combining (\ref{B1}),(\ref{B2}) and (\ref{B3}), 
\begin{eqnarray}
\mathbb{E}\vert LX_t^{n,M}\vert_{l}^p\leq {C}_{l,p}(T)[1+\int_0^t\mathbb{E}\vert LX_{\tau_n(r)}^{n,M}\vert_{l}^pdr],  \label{gronL}
\end{eqnarray}
Using Gronwall's lemma for (\ref{gronL}) as for (\ref{gronD}), we conclude that 
\begin{eqnarray}
\sup_{n,M}\mathbb{E}\vert LX_t^{n,M}\vert_{l}^p\leq C_{l,p}(T). \label{L*}
\end{eqnarray}

As a summary of the recurrence argument, we remark that the uniform bound in $n,M$ of the operator $L$ for $l=0$ is due to the \textbf{Hypothesis 2.1,2.4} and \textbf{Lemma 5.3} $c)$, and it propagates to larger $l$ thanks to \textbf{Lemma 5.3} $b)$.
\end{proof}

\bigskip

\noindent\textit{Proof of \textbf{Lemma 4.1}.}

By \textbf{Lemma 5.1} and \textbf{Lemma 5.5}, as a consequence of \textbf{Lemma 3.2}, we have $X_t^M\in\mathcal{D}_{l,p}$ and $\sup\limits_M\Vert X_t^M\Vert_{L,l,p}\leq C_{l,p}(T)$.\qed

\subsection{Proof of Lemma 4.2}
In the following, we turn to the non-degeneracy of $X_t^M$.
We consider the approximate equation (\ref{3.02})
\[
X_{t}^{M }=x+\int_{0}^{t}\int_{[1,M)}\widetilde{c}(r,z,X_{r-}^{M })N_\nu(dr,dz)+\int_{0}^{t}b
_{M}(r,X_{r}^{M })dr+\int_{0}^{t}\int_{\{z\geq M\}}\widetilde{c}
(r,z,X_{r}^{M })W_\nu(dr,dz).
\]%
We can calculate the Malliavin derivatives of the Euler scheme and then by passing to the limit, we have 
\begin{eqnarray}
&&D^{Z}_{(k,i)}X_{t}^{M}=\mathbbm{1}_{\{k\leq M-1\}}\mathbbm{1}_{\{i\leq J_t^k\}}\xi ^{k}_{i}\partial
_{z}\widetilde{c}(T_i^k,Z_i^k,X_{T_i^k-}^M)+\int_{T_{i}^{k}}^{t}\int_{[1,M)}\partial
_{x}\widetilde{c}(r,z,X^M_{r-})D^{Z}_{(k,i)}X_{r-}^{M}N_{\nu }(dr,dz)  \nonumber\\
&&+\int_{T_{i}^{k}}^{t}\partial_xb
_{M}(r,X_{r}^{M })D^{Z}_{(k,i)}X_{s}^{M}dr+\int_{T_{i}^{k}}^{t}\int_{\{z\geq M\}}\partial_x\widetilde{c}(r,z,X_{r}^{M })D^{Z}_{(k,i)}X_{s}^{M}W_\nu(dr,dz).  \label{cM1}\\
&&D^{W}_{(s,z_0)}X_{t}^{M}=\int_{s}^{t}\int_{[1,M)}\partial
_{x}\widetilde{c}(r,z,X^M_{r-})D^{W}_{(s,z_0)}X_{r-}^{M}N_{\nu }(dr,dz)+\int_{s}^{t}\partial_xb
_{M}(r,X_{r}^{M })D^{W}_{(s,z_0)}X_{r}^{M}dr  \nonumber\\
&&+\mathbbm{1}_{\{s\leq t\}}\mathbbm{1}_{\{ z_0\geq M\}}\widetilde{c}(s,z_0,X_{s}^{M })+\int_{s}^{t}\int_{\{z\geq M\}}\partial_x\widetilde{c}(r,z,X_{r}^{M })D^{W}_{(s,z_0)}X_{r}^{M}W_\nu(dr,dz).  \label{cM1.1}
\end{eqnarray}%

We obtain now some explicit expressions for the Malliavin derivatives. We consider the
tangent flow $(Y^M_t)_{t\in[0,T]}$ which is the solution of the linear equation
\[
Y_{t}^{M}=1+\int_{0}^{t}\int_{[1,M)}\partial _{x}\widetilde{c}(r,z,X^M_{r-})Y_{r-}^{M}N_{\nu
}(dr,dz)+\int_{0}^{t}\partial_xb
_{M}(r,X_{r}^{M })Y_{r}^{M}dr+\int_{0}^{t}\int_{\{z\geq M\}}\partial_x\widetilde{c}(r,z,X_{r}^{M })Y_{r}^{M}W_\nu(dr,dz).
\]%
And using It$\hat{o}$'s formula, $\overline{Y}_{t}^{M}=1/Y^M_t$ verifies the equation
\begin{eqnarray*}
&&\overline{Y}_{t}^{M}=1-\int_{0}^{t}\int_{[1,M)}\partial _{x}\widetilde{c}(r,z,X^M_{r-})(1+\partial
_{x}\widetilde{c}(r,z,X^M_{r-}))^{-1}\overline{Y}_{r-}^{M}N_{\nu }(dr,dz)-\int_{0}^{t}\partial_xb
_{M}(r,X_{r}^{M })\overline{Y}_{r}^{M}dr\\
&&-\int_{0}^{t}\int_{\{z\geq M\}}\partial_x\widetilde{c}(r,z,X_{r}^{M })\overline{Y}_{r}^{M}W_\nu(dr,dz)+\frac{1}{2}\int_{0}^{t}\int_{\{z\geq M\}}\vert\partial_x\widetilde{c}(r,z,X_{r}^{M })\vert^2\overline{Y}_{r}^{M}\nu(dz)dr .
\end{eqnarray*}

\bigskip

Applying \textbf{Hypothesis 2.1} with $q^\ast\geq1$ and \textbf{Hypothesis 2.2}, with $K_p$ a constant only depending on $p$, one also has (the proof is standard)
\begin{equation}
\mathbb{E}(\sup_{s\leq t}(\left\vert Y_{s}^{M}\right\vert ^{p}+\left\vert \overline{Y}%
_{s}^{M}\right\vert ^{p}))\leq K_p<\infty.   \label{cM2}
\end{equation}%
\begin{remark}
 Due to (\ref{cbar}), we have 
\[
	\max\Big\{\int_{[1,M)}\vert\bar{c}(z)\vert^p\nu(dz),\int_{[M,\infty)}\vert\bar{c}(z)\vert^p\nu(dz)\Big\}
	\leq \int_{[1,\infty)}\vert\bar{c}(z)\vert^p\nu(dz)= \bar{c}_p,
\]
so the constant in (\ref{cM2}) is uniform with respect to $M$.
\end{remark}

Then using the uniqueness of solution to the equation (\ref{cM1}) and (\ref{cM1.1}), one
obtains 
\begin{eqnarray}
	D^{Z}_{(k,i)}X_{t}^{M}&=&\mathbbm{1}_{\{k\leq M-1\}}\mathbbm{1}_{\{i\leq J_t^k\}}\xi ^{k}_{i}Y_{t}^{M}\overline{Y}^{M}_{T^{k}_{i}-}\partial_{z}\widetilde{c}(T^{k}_{i},Z^{k}_{i},X^M_{T^{k}_{i}-}),\nonumber\\
	 D^{W}_{(s,z_0)}X_{t}^{M}&=&\mathbbm{1}_{\{s\leq t\}}\mathbbm{1}_{\{ z_0\geq M\}}Y_{t}^{M}\overline{Y}^{M}_{s}\widetilde{c}(s,z_0,X_{s}^{M}).  \label{cM3}
\end{eqnarray}%
And the Malliavin covariance of $%
X_{t}^{M}$ is %
\begin{eqnarray}
\sigma _{X_{t}^{M}}=\left\langle DX_{t}^{M},DX_{t}^{M}\right\rangle
_{\mathcal{H}}=\sum_{k=1}^{M-1 }\sum_{i=1}^{J_{t}^{k}}\vert D^{Z}_{(k,i)}X_{t}^{M}\vert^2+\int_0^T\int_{\{z\geq M\}}\vert D^{W}_{(s,z)}X_{t}^{M}\vert^2\nu(dz)ds .  \label{sigmaM}
\end{eqnarray}

\bigskip

In the following, we denote $\lambda_{t}^{M}= \sigma_{X_t^M}$. So the aim is to prove that for every $p\geq 1$,%
\begin{equation}
\mathbb{E}(\vert\lambda _{t}^{M}\vert^{-p})\leq C_p.  \label{cM5}
\end{equation}

We proceed in 5 steps.

\textbf{Step 1} We notice that by (\ref{cM3}) and (\ref{sigmaM}) 

\begin{eqnarray*}
\lambda _{t}^{M} =\sum_{k=1}^{M-1}\sum_{i=1}^{J_{t}^{k}}\xi_i^k \vert Y_{t}^{M}\vert^2\vert\overline{Y}%
_{T_{i}^{k}-}^{M}\vert^2\vert\partial
_{z}\widetilde{c}(T_{i}^{k},Z_{i}^{k},X_{T_{i}^{k}-}^{M
})\vert^{2}+\vert Y_{t}^M\vert^{2}\int_{0}^{t}\vert\overline{Y}_{s}^{M}\vert^2\int_{\{z\geq M\}}\vert\widetilde{c}(s,z,X_{s}^{M })\vert^2\nu(dz)ds.
\end{eqnarray*}%
We recall the ellipticity hypothesis (\textbf{Hypothesis 2.3}): There exists a function $\underline{c}(z)$ such that 
\[
\left\vert \partial _{z}\widetilde{c}(s,z,x)\right\vert ^{2}\geq \underline{c}(z)\quad and\quad \left\vert \widetilde{c}(s,z,x)\right\vert ^{2}\geq \underline{c}(z).
\]%
In particular 
\[
\int_{\{z\geq M\}}\vert \widetilde{c}(s,z,x)\vert^2\nu
(dz)\geq \int_{\{z \geq M\}}\underline{c}(z)\nu (dz),
\]%
so that%
\[
\lambda _{t}^{M}\geq Q_{t}^{-2}\times (\sum_{k=1}^{M-1}\sum_{i=1}^{J_{t}^{k}}%
\xi_i^k\underline{c}(Z_{i}^{k})+t\int_{\{z \geq M\}}%
\underline{c}(z)\nu (dz))\quad with\quad Q_{t}=\inf_{s\leq t}\vert Y_{s}^M%
\overline{Y}_{t}^M\vert.
\]%
We denote 
\[
\rho _{t}^{M}=\sum_{k=1}^{M-1}\sum_{i=1}^{J_{t}^{k}}\xi_i^k\underline{c}(Z_{i}^{k}),\quad \bar{\rho} _{t}^{M}=\sum_{k=M}^{\infty}\sum_{i=1}^{J_{t}^{k}}\xi_i^k\underline{c}(Z_{i}^{k}),\quad \alpha ^{M}=\int_{\{z\geq
M\}}\underline{c}(z)\nu (dz).
\]%
By (\ref{cM2}), $(\mathbb{E}\sup\limits_{s\leq
t}\left\vert Y_{s}^M\overline{Y}_{t}^M\right\vert ^{4p})^{1/2}\leq C<\infty,$ so that 
\begin{eqnarray}
\mathbb{E}(\vert \lambda_t^M\vert^{-p})\leq C(\mathbb{E}(\vert \rho _{t}^{M}+t\alpha ^{M}\vert^{-2p}))^{\frac{1}{2}}. \label{Step1}
\end{eqnarray}

\textbf{Step 2 }Let $\Gamma(p)=\int_0^\infty s^{p-1}e^{-s}ds$. By a change of variables, we have the numerical
equality 
\[
\frac{1}{(\rho _{t}^{M}+t\alpha ^{M})^{p}}=\frac{1}{\Gamma (p)}\int_{0}^{\infty
}s^{p-1}e^{-s(\rho _{t}^{M}+t\alpha ^{M})}ds 
\]%
which, by taking expectation, gives 
\begin{eqnarray}
\mathbb{E}(\frac{1}{(\rho _{t}^{M}+t\alpha ^{M})^{p}})=\frac{1}{\Gamma (p)}\int_{0}^{\infty
}s^{p-1}\mathbb{E}(e^{-s(\rho _{t}^{M}+t\alpha ^{M})})ds.  \label{Gamma}
\end{eqnarray}

\textbf{Step 3 (splitting).} In order to compute $\mathbb{E}(e^{-s(\rho _{t}^{M}+t\alpha ^{M})})$ we have
to interpret $\rho _{t}^M$ in terms of Poisson measures. We recall that we suppose the "splitting hypothesis" (\ref{3.13}):
\[
\mathbbm{1}_{I_k}(z)\frac{\nu(dz)}{m_k}\geq \mathbbm{1}_{I_k}(z)\varepsilon _{k}dz,
\]with $I_k=[k,k+1),\ m_k=\nu(I_k)$.
We also have the function $\psi $ and $m(\psi )=\int_{\mathbb{R}} \psi
(t)dt.$ And we use the basic decomposition 
\[
Z_{i}^{k}= \xi _{i}^{k}V_{i}^{k}+(1-\xi _{i}^{k})U_{i}^{k}
\]%
where $V_{i}^{k},U_{i}^{k},\xi _{i}^{k}, k,i\in\mathbb{N}$ are some independent random variables with laws given in (\ref{split}).

For every $k$ we consider a Poisson point measure $N_{k}(ds,d\xi ,dv,du)$
with $\xi \in \{0,1\},v,u\in [1,\infty),s\in[0,T]$ with compensator%
\begin{eqnarray*}
\widehat{N_{k}}(ds,d\xi ,dv,du)&=&\widehat{M}_{k}(d\xi ,dv,du)\times ds  \\
with\quad \widehat{M}_{k}(d\xi ,dv,du) &=& b_k(d\xi )\times \mathbbm{1}_{I_k}(v)\frac{1}{m(\psi )}\psi (v-(k+\frac{1}{2}))dv \\
&\times& \frac{1}{1-\varepsilon _{k}m(\psi )}\mathbbm{1}_{I_k}(u)(\mathbb{P}(Z_{1}^{k}\in
du)-\varepsilon _{k}\psi (u-(k+\frac{1}{2}))du).
\end{eqnarray*}%
Here $b_k(d\xi )$ is the Bernoulli law of parameter $\varepsilon
_{k}m(\psi )$. The intervals $I_k,k\in\mathbb{N}$ are disjoint so the Poisson point measures $N_{k},k=1,\cdots,M-1$ are independent. Then 
\[
\sum_{i=1}^{J_{t}^{k}}\xi _{i}^{k}{\underline{c}}(Z_{i}^{k})=%
\sum_{i=1}^{J_{t}^{k}}\xi _{i}^{k}{\underline{c}}(\xi _{i}^{k}V_{i}^{k}+(1-\xi _{i}^{k})U_{i}^{k})=\int_{0}^{t}\int_{\{0,1\}}\int_{[1,\infty)^2} \xi {\underline{c}}(\xi v+(1-\xi
)u)N_{k}(ds,d\xi ,dv,du).
\]%
In order to get compact notation, we put together all the measures $%
N_{k},k\leq M-1.$ Since they are independent we get a new Poisson point measure
that we denote by $\Theta.$ And we have 
\begin{eqnarray*}
\rho _{t}^M &=&\sum_{k=1}^{M-1}\sum_{i=1}^{J_{t}^{k}}\xi _{i}^{k}%
{\underline{c}}(Z_{i}^{k})=\int_{0}^{t}\int_{\{0,1\}}\int_{[1,\infty)^2} \xi {\underline{c}}(\xi v+(1-\xi )v)\Theta(ds,d\xi ,dv,du).
\end{eqnarray*}

\textbf{Step 4 }Using It\^{o}'s formula,

\begin{eqnarray*}
\mathbb{E}(e^{-s\rho _{t}^M}) &=&1+\mathbb{E}\int_{0}^{t}\int_{\{0,1\}}\int_{[1,\infty)^2} (e^{-s(\rho _{r-}^M+\xi {\underline{c}}%
(\xi v+(1-\xi )v))}-e^{-s\rho _{r-}^M})\widehat{\Theta}(dr,d\xi ,dv,du) \\
&=&1-\int_{0}^{t}\mathbb{E}(e^{-s\rho _{r-}^M})dr\int_{\{0,1\}}\int_{[1,\infty)^2} (1-e^{-s\xi {\underline{c}}(\xi
v+(1-\xi )v)})\sum_{k=1}^{M-1 }\widehat{M}_{k}(d\xi ,dv,du).
\end{eqnarray*}%
Solving the above equation we obtain 
\begin{eqnarray*}
\mathbb{E}(e^{-s\rho _{t}^M}) &=&\exp (-t\sum_{k=1}^{M-1 }\int_{\{0,1\}}\int_{[1,\infty)^2} (1-e^{-s\xi {\underline{c}}(\xi v+(1-\xi
)u)})\widehat{M}_{k}(d\xi ,dv,du)).
\end{eqnarray*}
We compute
\begin{eqnarray*}
\int_{\{0,1\}\times [1,\infty)^2}(1-e^{-s\xi {\underline{c}}(\xi
v+(1-\xi )u)})\widehat{M}_{k}(d\xi ,dv,du) = \varepsilon _{k}m(\psi )\int_{k}^{k+1}(1-e^{-s{\underline{c}}(v)})\frac{1}{m(\psi )}\psi (v-(k+\frac{1}{2}))dv.
\end{eqnarray*}%
Since $\psi \geq 0$ and $\psi (z)=1$ if $\left\vert z\right\vert \leq \frac{1%
}{4}$ it follows that the above term is larger than 
\[
\varepsilon _{k}\int_{k+\frac{1}{4}}^{k+\frac{3}{4}}(1-e^{-s{\underline{c}}%
(v)})dv.
\]%
Finally this gives 
\begin{eqnarray*}
\mathbb{E}(e^{-s\rho _{t}^M}) &\leq &\exp (-t\sum_{k=1}^{M-1 }\varepsilon
_{k}\int_{k+\frac{1}{4}}^{k+\frac{3}{4}}(1-e^{-s{\underline{c}}(v)})dv) \\
&=&\exp (-t\int_{1}^{M }(1-e^{-s{\underline{c}}(v)})m(dv)),
\end{eqnarray*}%
 with 
\begin{eqnarray}
m(dv)=\sum_{k=1}^{\infty }\varepsilon _{k}1_{(k+\frac{1}{4},k+\frac{3}{4}%
)}(v)dv. \label{mm}
\end{eqnarray}

In the same way, we get
\[
\mathbb{E}(e^{-s\bar{\rho} _{t}^{M}})\leq \exp (-t\int_{M}^{\infty }(1-e^{-s\underline{{c}}%
(v)})m(dv)).
\]
Notice that $t\alpha^M\geq\mathbb{E}(\bar{\rho}_t^M)$. Then using Jensen's inequality for the convex function $f(x)=e^{-sx},s,x>0$, we have 
\begin{eqnarray*}
e^{-st\alpha ^{M}}\leq e^{-s\mathbb{E}\bar{\rho} _{t}^{M}}\leq \mathbb{E}(e^{-s\bar{\rho} _{t}^{M}})\leq \exp (-t\int_{M}^{\infty }(1-e^{-s\underline{{c}}%
(v)})m(dv)).
\end{eqnarray*}
So for every $M\in\mathbb{N}$, we get
\begin{eqnarray}
\mathbb{E}(e^{-s(\rho _{t}^{M}+t\alpha ^{M})}) &=&e^{-st\alpha ^{M}}\times
\mathbb{E}(e^{-s\rho _{t}^{M}}) \nonumber\\
&\leq &\exp (-t\int_{M}^{\infty}(1-e^{-s\underline{{c}}%
(v)})m(dv))\times \exp (-t\int_{1}^{M}(1-e^{-s\underline{{c}}%
(v)})m(dv))  \nonumber\\
&=&\exp (-t\int_{1}^{\infty}(1-e^{-s\underline{{c}}%
(v)})m(dv)),  \label{sew}
\end{eqnarray}and the last term does not depend on $M$.

\bigskip

Now we will use the Lemma 14 from $\cite{ref4}$, which states the following.
\begin{lemma}
We consider an abstract measurable space $E$, a $\sigma$-finite measure $\eta$ on this space and a non-negative measurable function $f:E\rightarrow\mathbb{R}_+$ such that $\int_Efd\eta<\infty.$ For $t>0$ and $p\geq1$, we note
\[
\alpha_f(t)=\int_E(1-e^{-tf(a)})\eta(da)\quad and\quad I_t^p(f)=\int_0^\infty s^{p-1}e^{-t\alpha_f(s)}ds.
\]
We suppose that for some $t>0$ and $p\geq1$, 
\begin{equation}
\underline{\lim }_{u\rightarrow \infty }\frac{1}{\ln u}\eta(f\geq 
\frac{1}{u})>p/t,  \label{cm}
\end{equation}%
then $I_t^p(f)<\infty.$
\end{lemma}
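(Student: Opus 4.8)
The plan is to translate the hypothesis into a logarithmic lower bound for $\alpha_f(s)$ as $s\to\infty$, say $\alpha_f(s)\ge \beta'\ln s$ for large $s$ with $\beta'>p/t$, so that $e^{-t\alpha_f(s)}$ decays like $s^{-t\beta'}$ with $t\beta'>p$, and then $I_t^p(f)$ converges by comparison with $\int^{\infty}s^{p-1-t\beta'}\,ds$. First I would unwind the assumption: since $\liminf_{u\to\infty}\frac{1}{\ln u}\,\eta(f\ge 1/u)>p/t$, one can fix constants $\beta>\beta'>p/t$ and a threshold $u_0\ge 1$ such that $\eta(f\ge 1/u)\ge\beta\ln u$ for every $u\ge u_0$, equivalently $\eta(f\ge r)\ge-\beta\ln r$ for all $0<r\le 1/u_0$. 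Note also that $\alpha_f(s)\le s\int_E f\,d\eta<\infty$, so every quantity below is well defined.

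The key step is the lower bound for $\alpha_f(s)$. Writing $1-e^{-sf(a)}=\int_0^{f(a)}s e^{-sv}\,dv$ and interchanging the order of integration (Tonelli) gives the representation
\[
\alpha_f(s)=\int_0^{\infty}s e^{-sv}\,\eta(f>v)\,dv .
\]
Since $\{f>v\}\supseteq\{f\ge 2v\}$, we have $\eta(f>v)\ge\eta(f\ge 2v)\ge-\beta\ln(2v)$ whenever $2v\le 1/u_0$, so, keeping only this range and substituting $w=sv$,
\[
\alpha_f(s)\ge-\beta\int_0^{1/(2u_0)}s e^{-sv}\ln(2v)\,dv=\beta\ln s\int_0^{s/(2u_0)}e^{-w}\,dw-\beta\int_0^{s/(2u_0)}e^{-w}\ln(2w)\,dw .
\]
As $s\to\infty$ the first integral $\int_0^{s/(2u_0)}e^{-w}\,dw\to 1$, while the second converges (because $\int_0^{\infty}e^{-w}\vert\ln(2w)\vert\,dw<\infty$); hence $\alpha_f(s)\ge\beta\ln s-C_0$ for some constant $C_0$, and in particular $\alpha_f(s)\ge\beta'\ln s$ for all $s\ge s_1$ with $s_1$ large enough.

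To conclude, split $I_t^p(f)=\int_0^{s_1}s^{p-1}e^{-t\alpha_f(s)}\,ds+\int_{s_1}^{\infty}s^{p-1}e^{-t\alpha_f(s)}\,ds$. On $[0,s_1]$ one has $\alpha_f(s)\ge 0$, so $e^{-t\alpha_f(s)}\le 1$ and this part is at most $s_1^{p}/p$. On $[s_1,\infty)$ one has $e^{-t\alpha_f(s)}\le e^{-t\beta'\ln s}=s^{-t\beta'}$, and since $t\beta'>p$, i.e. $p-1-t\beta'<-1$, the integral $\int_{s_1}^{\infty}s^{p-1-t\beta'}\,ds$ is finite. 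Summing the two contributions gives $I_t^p(f)<\infty$.

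The only genuine difficulty is the lower bound on $\alpha_f(s)$. The naive estimate $\alpha_f(s)\ge(1-e^{-1})\,\eta(f\ge 1/s)$, obtained by discarding everything except the set $\{f\ge 1/s\}$, carries the spurious factor $1-e^{-1}<1$ and would only give $t\alpha_f(s)\ge(1-e^{-1})\beta t\ln s$, which need not exceed $p\ln s$. Averaging $\eta(f>v)$ against the density $s e^{-sv}$ over the whole half-line, as in the representation above, is precisely what recovers the full constant $\beta$ at the cost of only a bounded error term, and that is what makes the argument work.
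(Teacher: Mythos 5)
Your proof is correct. Note that the paper itself does not prove this statement: it is quoted verbatim as Lemma 14 of the reference by Bally and Cl\'ement, so there is no in-text argument to compare against line by line. Your argument is a clean, self-contained proof, and the one point you flag as the ``genuine difficulty'' is exactly the right one: the crude bound $\alpha_f(s)\geq (1-e^{-1})\,\eta(f\geq 1/s)$ loses a constant factor that the hypothesis cannot absorb. Your fix — the layer-cake representation $\alpha_f(s)=\int_0^\infty s e^{-sv}\eta(f>v)\,dv$ followed by the substitution $w=sv$, which recovers the full constant $\beta$ up to a bounded additive error — is sound (the Tonelli step is justified since $\eta$ is $\sigma$-finite and $\eta(f>v)\leq v^{-1}\int_E f\,d\eta<\infty$ by Markov). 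For comparison, the argument in the cited source achieves the same thing more cheaply by restricting the integral to $\{f\geq 1/u\}$ with a cutoff $u$ chosen slightly smaller than $s$ (e.g.\ $u=s/\ln s$), so that $1-e^{-sf}\geq 1-e^{-\ln s}\to 1$ while $\eta(f\geq 1/u)\geq\beta(\ln s-\ln\ln s)\sim\beta\ln s$; both routes yield $\alpha_f(s)\geq\beta'\ln s$ for large $s$ and hence the convergence of $I_t^p(f)$ by comparison with $\int^\infty s^{p-1-t\beta'}\,ds$.
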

\bigskip

We will use the above lemma for $\eta=m$ and $f=\underline{c}$. So if we have 
\begin{equation}
\underline{\lim }_{u\rightarrow \infty }\frac{1}{\ln u}m({\underline{c}}\geq 
\frac{1}{u})=\infty ,  \label{cm6}
\end{equation}%
then for every $p\geq 1,t>0,M\geq1$, (\ref{Gamma}),(\ref{sew}) and \textbf{Lemma 5.6} give
\begin{eqnarray}
\mathbb{E}(\frac{1}{\rho _{t}^{M}+t\alpha ^{M}})^{2p} &=&\frac{1}{\Gamma (2p)}\int_{0}^{\infty
}s^{2p-1}\mathbb{E}(e^{-s(\rho _{t}^{M}+t\alpha ^{M})})ds  \label{cm7} \\
&\leq &\frac{1}{\Gamma (2p)}\int_{0}^{\infty }s^{2p-1}\exp (-t\int_{1}^{\infty
}(1-e^{-s{\underline{c}}(v)})m(dv))ds<\infty .  \nonumber
\end{eqnarray}
Finally using (\ref{Step1}), we conclude that if (\ref{cm6}) holds, then
\begin{eqnarray}
\sup_{M}\mathbb{E}(\lambda_t^M)^{-p}<\infty. \label{detXt}
\end{eqnarray}

\textbf{Step 5 }Now the only problem left is to compute $m({\underline{c}}\geq \frac{1}{u}).$ It seems
difficult to discuss this in a completely abstract framework. So we suppose \textbf{Hypothesis 2.4} (a): There exists a constant $\varepsilon_{\ast}>0$ and there are some $\alpha_1>\alpha_2>0$ such that for every $k\in\mathbb{N}$,
\begin{equation*}
\mathbbm{1}_{I_k}(z)\frac{\nu (dz)}{m_k}\geq  \mathbbm{1}_{I_k}(z)\varepsilon_kdz \quad with\quad \varepsilon _{k}=\frac{\varepsilon_{\ast}}{{(k+1)}^{1-{{\alpha}} }},\ for\ any\ {\alpha}\in(\alpha_2,\alpha_1],\quad
and\quad
{\underline{c}}(z)\geq e^{- z^{\alpha_2 }},
\end{equation*}%
Then $\{z:{\underline{c}}(z)\geq \frac{1}{u}\}\supseteq\{z:(\ln u)^{1/{\alpha_2} }\geq z\}.$ In particular, for $k\leq\lfloor(\ln u)^{1/\alpha_2}\rfloor-1:=k(u)$, one has $I_k\subseteq\{z:\underline{c}(z)\geq\frac{1}{u}\}$.
Then for $u$ large enough, we compute%
\begin{eqnarray*}
m({\underline{c}} \geq \frac{1}{u})&\geq&\sum_{k=1}^{k(u)}m(I_k)\geq \frac{1}{2}\sum_{k=1}^{k(u)}\varepsilon _{k}\geq \frac{1}{2}\varepsilon_{\ast}\sum_{k=1}^{k(u)}\frac{1}{%
(k+1)^{1-{{\alpha}} }}\geq \frac{1}{2}\varepsilon_{\ast}\int_{2}^{(\ln u)^{1/{\alpha_2} }}\frac{1}{%
z^{1-{{\alpha}} }}dz \\
&=&\frac{\varepsilon_{\ast}}{{2{\alpha}} }((\ln u)^{{\alpha} /{\alpha_2} }-2^{{{\alpha}}  }).
\end{eqnarray*}%
Since ${\alpha}>\alpha_2$, (\ref{cm6}) is verified and we obtain (\ref{detXt}).

\bigskip
\bigskip

Now we consider \textbf{Hypothesis 2.4} (b): We suppose that there exists a constant $\varepsilon_{\ast}>0$ and there are some $\alpha >0$ such that for every $k\in\mathbb{N}$,
\begin{equation*}
\mathbbm{1}_{I_k}(z)\frac{\nu (dz)}{m_k}\geq  \mathbbm{1}_{I_k}(z)\varepsilon_kdz\quad with\quad \varepsilon _{k}=\frac{\varepsilon_{\ast}}{k+1},\quad and\quad {\underline{c}}(z)\geq \frac{1}{%
 z^{\alpha }}.
\end{equation*}%
Now $\{z:{\underline{c}}(z)\geq \frac{1}{u}\} \supseteq \{z:z\leq u^{1/\alpha }\}
$. Then for $u$ large enough,
\[
m({\underline{c}}\geq \frac{1}{u})\geq \frac{1}{2}\varepsilon_{\ast}\sum_{k=1}^{\lfloor u^{1/\alpha }\rfloor-1}\frac{1%
}{k+1}\geq \frac{1}{2}\varepsilon_{\ast}\int_{2}^{u^{1/\alpha }}\frac{dz}{z}= \frac{1}{2}\varepsilon_{\ast}(\frac{1}{\alpha}\ln
u-\ln{2}).
\]%
And consequently 
\[
\underline{\lim }_{u\rightarrow \infty }\frac{1}{\ln u}m({\underline{c}}\geq \frac{1}{u})\geq \frac{\varepsilon_{\ast}}{2\alpha }.
\]%
Using \textbf{Lemma 5.6}, this gives: if 
\[
\frac{2p}{t}< \frac{\varepsilon_{\ast}}{2\alpha }\quad \Leftrightarrow \quad t> \frac{4p \alpha}{\varepsilon_{\ast}}
\]
then  
\[
\sup_M\mathbb{E}(\frac{1}{\rho _{t}^{M}+t\alpha ^{M}})^{2p}<\infty,
\]
and we have $\sup\limits_M\mathbb{E}(\lambda_t^{M})^{-p}<\infty$.\qed

\bigskip

\subsection{Some proofs concerning Section 4.2}
We will prove that the triplet $(\mathcal{S},D,L)$ defined in Section 4.2 is an IbP framework.
Here, we only show that $D^q$ is closable and $L$ verifies the duality formula (\ref{0.01}).
To do so, we  introduce the divergence operator $\delta$.  We denote the space of simple processes by 
\begin{eqnarray*}
\mathcal{P}=\{u=((\bar{u}^k_i)_{\substack{1\leq i\leq m^\prime\\1\leq k\leq m}},\sum_{r=1}^{n}u_r\varphi_r):\bar{u}^k_i,u_r\in\mathcal{S},\varphi_r\in L^2(\mathbb{R}_{+}\times\mathbb{R}_{+},\nu\times Leb),m^\prime,m,n\in\mathbb{N}\} .
\end{eqnarray*}
For $u=((\bar{u}^k_i)_{\substack{1\leq i\leq m^\prime\\1\leq k\leq m}},\sum_{r=1}^{n}u_r\varphi_r)\in\mathcal{P}$, we denote $u^Z=(\bar{u}^k_i)_{\substack{1\leq i\leq m^\prime\\1\leq k\leq m}}$ and $u^W=\sum_{r=1}^{n}u_r\varphi_r$, so that $u=(u^Z,u^W)$.\\
We notice that $\mathcal{P}$ is dense in $L^2(\Omega;\mathcal{H})$, with $\mathcal{H}=l_{2}\otimes L^2(\mathbb{R}_{+}\times\mathbb{R}_{+},\nu\times Leb)$.

Then we define the divergence operator $\delta: \mathcal{P}\rightarrow \mathcal{S}$ by
\begin{eqnarray*}
&&\delta(u)=\delta^Z(u^Z)+\delta^W(u^W)\\
with\ &&\delta^Z(u^Z)=-\sum_{k=1}^m\sum_{i=1}^{m^\prime} (D^{Z}_{(k,i)}\bar{u}^k_i+\xi^k_i \bar{u}^k_i\times \theta _{k}(V^{k}_i))\\
&&\delta^W(u^W)=\sum_{r=1}^{n}u_rW_\nu(\varphi_r)-\sum_{r=1}^{n}
\langle D^Wu_r,\varphi_r\rangle_{L^2(\mathbb{R}_{+}\times\mathbb{R}_{+},\nu\times Leb)}.
\end{eqnarray*}

We will show that $\delta$ satisfies the following duality formula: For every $F\in\mathcal{S},u\in\mathcal{P}$, 
\begin{equation}
    \mathbb{E}\langle DF,u\rangle_{\mathcal{H}}=\mathbb{E}F\delta(u). \label{2.00}
\end{equation}

In fact, if we denote $\hat{V}_{i}^{k
}(x)$ the sequence  $(V_{i_0}^{k_0})_{\substack{1\leq i_0\leq m^\prime\\1\leq k_0\leq m}}$ after replacing $V_{i}^{k}$ by $x$, then for any $m^\prime,m\in\mathbb{N}$,
\begin{eqnarray*}
&&\mathbb{E}\langle D^ZF,u^Z\rangle_{l_2}=\mathbb{E}\sum_{k=1}^m\sum_{i=1}^{m^\prime} D_{(k,i)}^ZF\times \bar{u}^k_i\\
&&=\sum_{k=1}^m\sum_{i=1}^{m^\prime}\mathbb{E}\xi^k_i\partial_{v^k_i}f(\omega ,(V_{i_0}^{k_0})_{\substack{1\leq i_0\leq m^\prime\\1\leq k_0\leq m}},(W_\nu(\varphi_{j}))_{j=1}^{n})\bar{u}^k_i(\omega ,(V_{i_0}^{k_0})_{\substack{1\leq i_0\leq m^\prime\\1\leq k_0\leq m}},(W_\nu(\varphi_{j}))_{j=1}^{n})\\
&&=\sum_{k=1}^m\sum_{i=1}^{m^\prime}\mathbb{E}\int_{\mathbb{R}}\xi^k_i\partial_{v^k_i}f(\omega,\hat{V}_i^k(x),(W_\nu(\varphi_{j}))_{j=1}^{n})\times \bar{u}_k(\omega,\hat{V}_i^k(x),(W_\nu(\varphi_{j}))_{j=1}^{n})\frac{\psi_{k}(x)}{m(\psi)}dx\\
&&=-\sum_{k=1}^m\sum_{i=1}^{m^\prime}\mathbb{E}\int_{\mathbb{R}}\xi^k_if(\omega,\hat{V}_i^k(x),(W_\nu(\varphi_{j}))_{j=1}^{n})\times[\partial_{v^k_i}\bar{u}^k_i(\omega,\hat{V}_i^k(x),(W_\nu(\varphi_{j}))_{j=1}^{n})\\
&&+\bar{u}^k_i(\omega,\hat{V}_i^k(x),(W_\nu(\varphi_{j}))_{j=1}^{n})\frac{\partial_{x}\psi_{k}(x)}{\psi_{k}(x)}]\frac{\psi_{k}(x)}{m(\psi)}dx\\
&&=-\sum_{k=1}^m\sum_{i=1}^{m^\prime}\mathbb{E}F[D^Z_{(k,i)}\bar{u}^k_i+\xi^k_i\bar{u}^k_i\partial_{x}(\ln \psi_{k}(V^k_i))]=\mathbb{E}(F\delta^Z(u^Z)).
\end{eqnarray*}
On the other hand, since $L^2(\mathbb{R}_{+}\times\mathbb{R}_{+},\nu\times Leb)$ is a separable Hilbert space, we can assume without loss of generality that, in the definition of simple functionals, $(\varphi_1,\cdots,\varphi_m,\cdots)$ is the orthogonal basis of the space $L^2(\mathbb{R}_{+}\times\mathbb{R}_{+},\nu\times Leb)$. \\
Then with $p_r=\int_{\mathbb{R}_{+}\times\mathbb{R}_{+}}\varphi_r^2(s,z)\nu(dz)ds$, for any $n\in\mathbb{N}$,
\begin{eqnarray*}
&&\mathbb{E}\langle D^WF,u^W\rangle_{L^2(\mathbb{R}_{+}\times\mathbb{R}_{+},\nu\times Leb)}=\mathbb{E}\int_{\mathbb{R}_{+}\times\mathbb{R}_{+}}D^{W}_{(s,z)}F\times \sum_{r=1}^{n}u_r\varphi_r(s,z)\ \nu(dz)ds\\
&&=\mathbb{E}\sum_{r=1}^{n}\partial_{ w_r}f(\omega ,(V_{i}^k)_{\substack{1\leq i\leq m^\prime\\1\leq k\leq m}},(W_\nu(\varphi_{j}))_{j=1}^{n})u_r(\omega ,(V_{i}^k)_{\substack{1\leq i\leq m^\prime\\1\leq k\leq m}},(W_\nu(\varphi_{j}))_{j=1}^{n})p_r\\
&&=\sum_{r=1}^{n}\mathbb{E}\int_{\mathbb{R}}\partial_{ w_r}f(\omega ,(V_{i}^k)_{\substack{1\leq i\leq m^\prime\\1\leq k\leq m}},W_\nu(\varphi_{1}),\cdots,W_\nu(\varphi_{r-1}),y,W_\nu(\varphi_{r+1}),\cdots,W_\nu(\varphi_{n}))\\
&&\times u_r(\omega ,(V_{i}^k)_{\substack{1\leq i\leq m^\prime\\1\leq k\leq m}},W_\nu(\varphi_{1}),\cdots,W_\nu(\varphi_{r-1}),y,W_\nu(\varphi_{r+1}),\cdots,W_\nu(\varphi_{n}))\frac{1}{\sqrt{2\pi p_r}}e^{-\frac{y^2}{2p_r}} dy\times p_r\\
&&=-\sum_{r=1}^{n}\mathbb{E}\int_{\mathbb{R}}f(\omega ,(V_{i}^k)_{\substack{1\leq i\leq m^\prime\\1\leq k\leq m}},W_\nu(\varphi_{1}),\cdots,W_\nu(\varphi_{r-1}),y,W_\nu(\varphi_{r+1}),\cdots,W_\nu(\varphi_{n}))\\
&&\times[\partial_{w_r}u_r(\omega ,(V_{i}^k)_{\substack{1\leq i\leq m^\prime\\1\leq k\leq m}},W_\nu(\varphi_{1}),\cdots,W_\nu(\varphi_{r-1}),y,W_\nu(\varphi_{r+1}),\cdots,W_\nu(\varphi_{n}))\\
&&-\frac{y}{p_r}u_r(\omega ,(V_{i}^k)_{\substack{1\leq i\leq m^\prime\\1\leq k\leq m}},W_\nu(\varphi_{1}),\cdots,W_\nu(\varphi_{r-1}),y,W_\nu(\varphi_{r+1}),\cdots,W_\nu(\varphi_{n}))]\frac{1}{\sqrt{2\pi p_r}}e^{-\frac{y^2}{2p_r}} dy\times p_r\\
&&=\mathbb{E}F(\sum_{r=1}^{n}u_rW_\nu(\varphi_r)-\sum_{r=1}^{n}
\langle D^Wu_r,\varphi_r\rangle_{L^2(\mathbb{R}_{+}\times\mathbb{R}_{+},\nu\times Leb)})=\mathbb{E}(F\delta^W(u^W)).
\end{eqnarray*}
Then (\ref{2.00}) is proved. Using this duality formula recursively, we can show the closability of $D^q$. If there exists $u\in L^2(\Omega;\mathcal{H}^{\otimes q})$ such that $F_n\rightarrow0$ in $L^2(\Omega)$ and $D^qF_n\rightarrow u$ in  $L^2(\Omega;\mathcal{H}^{\otimes q})$, then for any $h_1,\cdots,h_q\in \mathcal{P}$, $\mathbb{E}\langle u,h_1\otimes\cdots\otimes h_q\rangle_{\mathcal{H}^{\otimes q}}=\lim\limits_{n\rightarrow\infty}\mathbb{E}\langle D^qF_n,h_1\otimes\cdots\otimes h_q\rangle_{\mathcal{H}^{\otimes q}}=\lim\limits_{n\rightarrow\infty}\mathbb{E}F_n\delta(h_1\delta(h_2(\cdots\delta(h_q))))=0$. Since $\mathcal{P}^{\otimes q}$ is dense in $L^2(\Omega;\mathcal{H}^{\otimes q})$, we conclude that $u=0$. This implies that $D^q$ is closable. 

We notice that from the definition of $\delta$ and $L$, we get immediately that $LF=\delta(DF),\ \forall F\in\mathcal{S}$. And if we replace $u$ by $DG$ in (\ref{2.00}) for $G\in\mathcal{S}$, we get the duality formula of $L$ (\ref{0.01}).

\bigskip\bigskip

{\bf Data avaibility statement.} Data sharing is not applicable to this article as no datasets were generated or analyzed during the current study.

\bigskip

\bigskip 

\end{document}